\newtheorem{theorem}{Theorem}[section]
\newtheorem{corollary}[theorem]{Corollary}
\newtheorem{proposition}[theorem]{Proposition}
\newtheorem{lemma}[theorem]{Lemma}
\theoremstyle{definition}
\newtheorem{definition}[theorem]{Definition}
\newtheorem{remark}[theorem]{Remark}
\numberwithin{equation}{section}
\newcommand{\IP}{\mathop{\null\mathds{P}}\nolimits}
\newcommand{\I}{\mathop{\null\mathds{1}}\nolimits}
\newcommand{\IN}{\mathds{N}}
\newcommand{\IZ}{\mathds{Z}}
\newcommand{\IC}{\mathds{C}}
\newcommand{\dist}{d}
\newcommand{\Ext}{\text{\textnormal{Ext}}}
\newcommand{\Hull}{\text{\textnormal{Hull}}}
\newcommand{\obound}{\partial_{\text{\textnormal{o}}}}
\newcommand{\Loop}{\text{\textnormal{loop}}}
\newcommand{\diam}{\text{\textnormal{diam}}}
\newcommand{\distinf}{d_{\infty}}
\newcommand{\BLS}{\mathcal{L}}
\begin{document}
\title[Random walk loop soups and conformal loop ensembles]{Random walk loop soups and\\ conformal loop ensembles}
\date{\today}
\author{Tim van de Brug}
\author{Federico Camia}
\author{Marcin Lis}

\address{Tim van de Brug\\
Department of Mathematics\\
VU University Amsterdam\\
De Boelelaan 1081a\\
1081 HV Amsterdam\\
The Netherlands}
\email{t.vande.brug\,@\,vu.nl}
\address{Federico Camia\\
Department of Mathematics\\
VU University Amsterdam\\
De Boelelaan 1081a\\
1081 HV Amsterdam\\
The Netherlands\\
and
New York University Abu Dhabi\\
PO Box 129188\\
Saadiyat Island\\
Abu Dhabi\\
United Arab Emirates}
\email{f.camia\,@\,vu.nl}
\address{Marcin Lis\\
Mathematical Sciences\\
Chalmers University of Technology and University of Gothenburg\\
SE-41296 Gothenburg\\
Sweden}
\email{marcinl\,@\,chalmers.se}

\begin{abstract}
The random walk loop soup is a Poissonian ensemble of lattice loops; it has been extensively studied because of its connections to the discrete Gaussian free field, but was originally introduced by Lawler and Trujillo Ferreras as a discrete version of the Brownian loop soup of Lawler and Werner, a conformally invariant Poissonian ensemble of planar loops with deep connections to conformal loop ensembles (CLEs) and the Schramm-Loewner evolution (SLE).

Lawler and Trujillo Ferreras showed that, roughly speaking, in the continuum scaling limit, ``large'' lattice loops from the random walk loop soup converge to ``large'' loops from the Brownian loop soup. Their results, however, do not extend to clusters of loops, which are interesting because the connection between Brownian loop soup and CLE goes via cluster boundaries. In this paper, we study the scaling limit of clusters of ``large'' lattice loops, showing that they converge to Brownian loop soup clusters. In particular, our results imply that the collection of outer boundaries of outermost clusters composed of ``large'' lattice loops converges to CLE.
\end{abstract}

\keywords{Brownian loop soup, random walk loop soup, planar Brownian motion, outer boundary, conformal loop ensemble}
\subjclass[2010]{Primary 60J65; secondary 60G50, 60J67}

\maketitle

\section{Introduction}

Several interesting models of statistical mechanics, such as percolation and the Ising
and Potts models, can be described in terms of clusters. In two dimensions and at the
critical point, the scaling limit geometry of the boundaries of such clusters is known
(see \cite{CamNew06,CamNew07,CamNew08,CheDum,Smi01}) or conjectured (see
\cite{KagNie,Smi10}) to be described by some member of the one-parameter family of Schramm-Loewner
evolutions (SLE$_{\kappa}$ with $\kappa>0$) and related conformal loop ensembles
(CLE$_{\kappa}$ with $8/3<\kappa<8$). What makes SLEs and CLEs natural candidates
is their conformal invariance, a property expected of the scaling limit of two-dimensional
statistical mechanical models at the critical point.
SLEs can be used to describe the scaling limit of single interfaces; CLEs are collections
of loops and are therefore suitable to describe the scaling limit of the collection of all
macroscopic boundaries at once. For example, the scaling limit of the critical percolation
exploration path is SLE$_6$ \cite{CamNew07,Smi01}, and the scaling limit of the
collection of all critical percolation interfaces in a bounded domain is CLE$_6$ \cite{CamNew06,CamNew08}.

For $8/3 < \kappa \leq 4$, CLE$_\kappa$ can be obtained \cite{SheWer} from the Brownian
loop soup, introduced by Lawler and Werner \cite{LawWer} (see Section \ref{sec:definitions} for a definition), as we explain below. A sample of the Brownian loop soup in a bounded domain $D$ with intensity $\lambda>0$ is the collection of loops contained in $D$ from a Poisson realization of a conformally invariant intensity measure $\lambda\mu$. When $\lambda \leq 1/2$, the loop soup is composed of disjoint clusters of loops \cite{SheWer} (where a cluster is a maximal collection of loops that intersect each other). When $\lambda>1/2$, there is a unique cluster~\cite{SheWer} and the set of points not surrounded by a loop is totally disconnected (see~\cite{BroCam}). Furthermore, when $\lambda \leq 1/2$, the outer boundaries of the outermost loop soup clusters are distributed like conformal loop ensembles (CLE$_{\kappa}$) \cite{She,SheWer,Wer03} with $8/3 < \kappa \leq 4$. More precisely, if $8/3 < \kappa \leq 4$, then $0 < (3 \kappa -8)(6 - \kappa) / 4 \kappa \leq 1/2$ and the collection of all outer boundaries of the outermost clusters of the Brownian loop soup with intensity $\lambda = (3 \kappa -8)(6 - \kappa)/4 \kappa$ is distributed like $\text{CLE}_{\kappa}$ \cite{SheWer}. For example, the continuum scaling limit of the collection of all macroscopic outer boundaries of critical Ising spin clusters is conjectured to correspond to $\text{CLE}_3$ and to a Brownian loop soup with $\lambda=1/4$.

We note that most of the existing literature, including \cite{SheWer}, contains an error in the correspondence between $\kappa$ and the loop soup intensity $\lambda$. The error can be traced back to the choice of normalization of the (infinite) Brownian loop measure $\mu$. (We thank Gregory Lawler for discussions on this topic.) With the normalization used in this paper, which coincides with the one in the original definition of the Brownian loop soup \cite{LawWer}, for a given $8/3 < \kappa \leq 4$, the corresponding value of the loop soup intensity $\lambda$ is half of that given in \cite{SheWer} -- see, for example, Section 6 of \cite{CamGanKle} for a discussion of this and of the relation between $\lambda$ and the central charge of the Brownian loop soup.

In \cite{LawTru} Lawler and Trujillo Ferreras introduced the random walk loop soup as a discrete version of the Brownian loop soup, and showed that, under Brownian scaling, it converges in an appropriate sense to the Brownian loop soup. The authors of \cite{LawTru} focused on individual loops, showing that, with probability going to 1 in the scaling limit, there is a one-to-one correspondence between ``large'' lattice loops from the random walk loop soup and ``large''  loops from the Brownian loop soup such that corresponding loops are close.

In \cite{Lej10} Le Jan showed that the random walk loop soup has remarkable connections with the discrete Gaussian free field, analogous to Dynkin's isomorphism \cite{Dyn84a,Dyn84b} (see also \cite{BryFroSpe}). Such considerations have prompted an extensive analysis of more general versions of the random walk loop soup (see e.g.\ \cite{Lej11,Szn}).

As explained above, the connection between the Brownian loop soup and SLE/CLE goes through its loop clusters and their boundaries. In view of this observation, it is interesting to investigate whether the random walk loop soup converges to the Brownian loop soup in terms of loop clusters and their boundaries, not just in terms of individual loops, as established by Lawler and Trujillo Ferreras \cite{LawTru}. This is a natural and nontrivial question, due to the complex geometry of the loops involved and of their mutual overlaps.

\begin{figure}
	\begin{center}
		\includegraphics[scale=1]{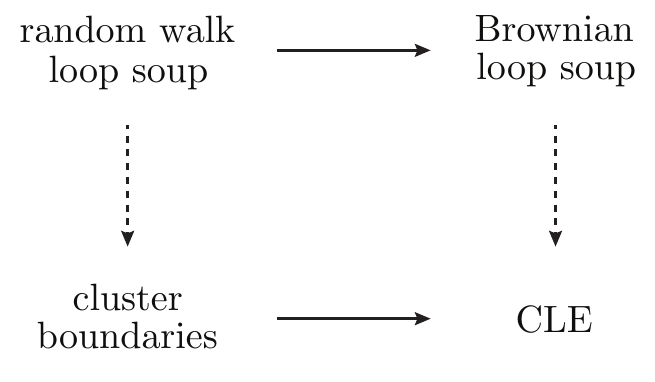}
	\end{center}
	\caption{Schematic diagram of relations between discrete and continuous loop soups and their cluster boundaries. Horizontal arrows indicate a scaling limit. In this paper we show the convergence corresponding to the bottom horizontal arrow.}
	\label{fig:diagram}
\end{figure}

In this paper, we consider random walk loop soups from which the ``vanishingly small'' loops have been removed and establish convergence of their clusters and boundaries, in the scaling limit, to the clusters and boundaries of the corresponding Brownian loop soups (see Figure \ref{fig:diagram}).
We work in the same set-up as \cite{LawTru}, which in particular means that the number of loops of the random walk loop soup after cut-off diverges in the scaling limit. We use tools ranging from classical Brownian motion techniques to recent loop soup results. Indeed, properties of planar Brownian motion as well as properties of CLEs play an important role in the proofs of our results.

We note that, while this paper was under review, a substantial improvement of our main result on the scaling limit of the random walk loop soup was announced by Lupu \cite{Lup}. The result announced appears to use our convergence result in a crucial way, combined with a coupling between the random walk loop soup and the Gaussian free field, and would give the convergence of the random walk loop soup to the Brownian loop soup keeping all loops.

\section{Definitions and main result}\label{sec:definitions}

We recall the definitions of the Brownian loop soup and the random walk loop soup. A \emph{curve} $\gamma$ is a continuous function $\gamma: [0,t_{\gamma}] \to \IC$, where $t_{\gamma}<\infty$ is the time length of $\gamma$. A \emph{loop} is a curve with $\gamma(0)=\gamma(t_{\gamma})$. A planar Brownian loop of time length $t_0$ started at $z$ is the process $z + B_t - (t/t_0) B_{t_0}$, $0\leq t\leq t_0$, where $B$ is a planar Brownian motion started at $0$.
The Brownian bridge measure $\mu^{\sharp}_{z,t_0}$ is a probability measure on loops, induced by a planar Brownian loop of time length $t_0$ started at $z$. The (rooted) Brownian loop measure $\mu$ is a measure on loops, given by
\[
\mu(C) = \int_{\IC} \int_0^{\infty} \frac{1}{2\pi t_0^2} \mu^{\sharp}_{z,t_0}(C) dt_0 dA(z),
\]
where $C$ is a collection of loops and $A$ denotes two-dimensional Lebesgue measure, see Remark 5.28 of \cite{Law}. For a domain $D$ let $\mu_D$ be $\mu$ restricted to loops which stay in $D$.

The (rooted) \emph{Brownian loop soup} with intensity $\lambda\in(0,\infty)$ in $D$ is a Poissonian realization from the measure $\lambda\mu_D$. The Brownian loop soup introduced by Lawler and Werner \cite{LawWer} is obtained by forgetting the starting points (roots) of the loops. The geometric properties we study in this paper are the same for both the rooted and the unrooted version of the Brownian loop soup. Let $\mathcal{L}$ be a Brownian loop soup with intensity $\lambda$ in a domain $D$, and let $\mathcal{L}^{t_0}$ be the collection of loops in $\mathcal{L}$ with time length at least $t_0$.

The (rooted) random walk loop measure $\tilde \mu$ is a measure on nearest neighbor loops in $\IZ^2$, which we identify with loops in the complex plane by linear interpolation. For a loop $\tilde\gamma$ in $\IZ^2$, we define
\[
\tilde\mu(\tilde\gamma) = \frac{1}{t_{\tilde\gamma}} 4^{-t_{\tilde\gamma}},
\]
where $t_{\tilde\gamma}$ is the time length of $\tilde\gamma$, i.e.\ its number of steps. The (rooted) \emph{random walk loop soup} with intensity $\lambda$ is a Poissonian realization from the measure $\lambda\tilde\mu$. For a domain $D$ and positive integer $N$, let $\mathcal{\tilde L}_N$ be the collection of loops $\tilde\gamma_N$ defined by $\tilde\gamma_N(t) = N^{-1} \tilde \gamma(2N^2 t)$, $0\leq t\leq t_{\tilde\gamma} / (2N^2)$, where $\tilde\gamma$ are the loops in a random walk loop soup with intensity $\lambda$ which stay in $ND$. Note that the time length of $\tilde\gamma_N$ is $t_{\tilde\gamma}/(2N^2)$. Let $\mathcal{\tilde L}_N^{t_0}$ be the collection of loops in $\mathcal{\tilde L}_N$ with time length at least $t_0$.

We will often identify curves and processes with their range in the complex plane, and a collection of curves $C$ with the set in the plane $\bigcup_{\gamma\in C} \gamma$. For a bounded set $A$, we write $\Ext A$ for the \emph{exterior} of~$A$, i.e.\ the unique unbounded connected component of
$\IC \setminus \overline{A}$. By $\Hull A$, we denote the \emph{hull} of $A$, which is the complement of $\Ext A$. We write $\obound A$ for the topological boundary of $\Ext A$, called the \emph{outer boundary} of $A$. Note that $\partial A \supset \obound A = \partial \Ext A = \partial \Hull A$. For sets $A,A'$, the \emph{Hausdorff distance} between $A$ and $A'$ is given by
\[
d_H(A,A') = \inf \{ \delta>0 : A\subset (A')^{\delta} \text{ and } A' \subset A^{\delta} \},
\]
where $A^{\delta} = \bigcup_{x\in A} B(x;\delta)$ with $B(x;\delta) = \{ y: |x-y| < \delta \}$.

Let $\mathcal{A}$ be a collection of loops in a domain $D$. A \emph{chain of loops} is a sequence of loops, where each loop intersects the loop which follows it in the sequence. We call $C \subset \mathcal{A}$ a \emph{subcluster} of $\mathcal{A}$ if each pair of loops in $C$ is connected via a finite chain of loops from $C$. We say that $C$ is a finite subcluster if it contains a finite number of loops. A subcluster which is maximal in terms of inclusion is called a \emph{cluster}. A cluster $C$ of $\mathcal{A}$ is called \emph{outermost} if there exists no cluster $C'$ of $\mathcal{A}$ such that $C'\neq C$ and $\text{Hull} C \subset \text{Hull} C'$. The \emph{carpet} of $\mathcal{A}$ is the set $D\setminus \bigcup_C (\Hull C \setminus \partial_o C)$, where the union is over all outermost clusters $C$ of $\mathcal{A}$. For collections of subsets of the plane $\mathcal{A},\mathcal{A}'$, the \emph{induced Hausdorff distance} is given by
\begin{align*}
d_H^*(\mathcal{A},\mathcal{A}') = &\inf \{ \delta>0 : \forall A\in\mathcal{A} \,\exists A'\in\mathcal{A}' \text{ such that } d_H(A,A')<\delta, \\
& \text{and } \forall A'\in\mathcal{A}' \,\exists A\in\mathcal{A} \text{ such that } d_H(A,A')<\delta \}.
\end{align*}

The main result of this paper is the following theorem:

\begin{theorem}\label{mainthmBLS}
Let $D$ be a bounded, simply connected domain, take $\lambda\in(0,1/2]$ and $16/9<\theta<2$. As $N\to\infty$,
\begin{itemize}
\item[(i)] the collection of hulls of all outermost clusters of $\mathcal{\tilde L}_N^{N^{\theta-2}}$ converges in distribution to the collection of hulls of all outermost clusters of~$\mathcal{L}$, with respect to $d_H^*$,
\item[(ii)] the collection of outer boundaries of all outermost clusters of $\mathcal{\tilde L}_N^{N^{\theta-2}}$ converges in distribution to the collection of outer boundaries of all outermost clusters of~$\mathcal{L}$, with respect to $d_H^*$,
\item[(iii)] the carpet of $\mathcal{\tilde L}_N^{N^{\theta-2}}$ converges in distribution to the carpet of $\mathcal{L}$, with respect to $d_H$.
\end{itemize}
\end{theorem}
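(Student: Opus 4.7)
The plan is to combine the Lawler--Trujillo Ferreras coupling of individual loops with a two-step approximation scheme:
\[
\tilde{\mathcal{L}}_N^{N^{\theta-2}} \;\longleftrightarrow\; \mathcal{L}^{N^{\theta-2}} \;\longleftrightarrow\; \mathcal{L}.
\]
The first correspondence is handled by a loop-by-loop coupling together with a cluster-stability statement; the second is an internal approximation statement for the Brownian loop soup itself.

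First, I would invoke (and if necessary refine) the coupling of \cite{LawTru} to obtain, with probability tending to $1$ as $N\to\infty$, a bijection between the loops of $\tilde{\mathcal{L}}_N^{N^{\theta-2}}$ and those of $\mathcal{L}^{N^{\theta-2}}$ staying in $D$, with paired loops at Hausdorff distance vanishing with $N$. The constraint $16/9<\theta<2$ plausibly reflects the trade-off between cutoff size and the quantitative strength of the coupling: $\theta<2$ is needed to keep the number of retained loops divergent (so that the cluster structure is asymptotically faithful), while $\theta>16/9$ is what lets the individual-loop coupling error stay within the budget required by the cluster-stability step below.

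Second, I would show that the hulls, outer boundaries, and carpet of $\mathcal{L}^{t_0}$ converge to those of $\mathcal{L}$ as $t_0\downarrow 0$. For $\lambda\le 1/2$, the Sheffield--Werner identification of outermost cluster boundaries with $\text{CLE}_\kappa$ loops supplies the necessary regularity: outermost cluster hulls are bounded by Jordan curves, distinct outermost clusters are at positive pairwise distance inside compact sub-domains, and the carpet has empty interior. The key step is to verify that discarding loops of time length $<t_0$ does not disconnect any cluster, i.e.\ every macroscopic loop of a cluster is reachable from every other via a chain of macroscopic loops. I would establish this using conformal invariance of $\mathcal{L}$, the explicit form of $\mu$, and planar Brownian motion hitting estimates to rule out ``thin bottlenecks'' that could only be bridged by tiny loops.

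Finally, I would prove a cluster-stability result: if the loops of $\mathcal{L}^{N^{\theta-2}}$ are individually perturbed by small amounts in Hausdorff distance (as produced by the coupling), the resulting collection of outermost cluster hulls remains close in $d_H^*$. I expect this to be the main obstacle: a small perturbation of a loop can \emph{a priori} create or destroy intersections, artificially merging or splitting clusters. I would address it by proving that, almost surely, any two loops in the same cluster of $\mathcal{L}$ overlap in a set containing a disc of positive radius, while loops in different clusters sit at positive distance; both properties are stable under small Hausdorff perturbations. Once part~(i) is in hand, part~(ii) follows because outermost cluster boundaries are Jordan curves, so $d_H$-convergence of hulls forces $d_H$-convergence of outer boundaries. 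Part~(iii) follows from the definition of the carpet together with (i) and (ii), using that the carpet has empty interior and that $\obound C$ approximates $\partial\,\Hull C$.
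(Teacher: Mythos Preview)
Your overall two-step scheme $\tilde{\mathcal{L}}_N^{N^{\theta-2}} \leftrightarrow \mathcal{L}^{N^{\theta-2}} \leftrightarrow \mathcal{L}$ matches the paper's, and your instinct that the crux is a ``cluster-stability'' statement under small loop perturbations is right. But your proposed proof of stability has a genuine gap. The claim that ``any two loops in the same cluster of $\mathcal{L}$ overlap in a set containing a disc of positive radius'' is simply false: Brownian loops have zero Lebesgue measure, so no intersection of two of them can contain a disc. What one can hope for is the weaker statement that the intersection is \emph{robust}, in the sense that any sufficiently small perturbation of the two loops still produces an intersection (this is the quantity $\text{overlap}(\gamma_1,\gamma_2)$ in the paper). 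For any \emph{fixed} pair of intersecting Brownian loops this holds a.s., but that is not enough here: the number of loops in $\mathcal{L}^{N^{\theta-2}}$ diverges as $N\to\infty$, so you would need a \emph{quantitative} lower bound on the overlap, uniform over all intersecting pairs, of order at least $cN^{-1}\log N$ to beat the coupling error. The paper proves exactly such a bound (Proposition~\ref{correspondence2}), but only for $\theta$ sufficiently close to $2$, not for the full range $16/9<\theta<2$. So your route, taken literally, would not yield the theorem as stated.

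The paper circumvents this by an asymmetric argument. It first shows (Theorem~\ref{finiteapproximation}) that each cluster $C$ of $\mathcal{L}$ can be approximated---in hull, exterior, and outer boundary---by a \emph{finite} subcluster $C'$ containing all loops above a fixed time threshold. Because $C'$ has only finitely many loops, the a.s.\ positivity of overlap for each intersecting pair suffices, with no quantitative control needed, to ensure the corresponding random walk loops $\tilde C'_N$ still form a connected subcluster. One then enlarges $\tilde C'_N$ to the full random walk cluster $\tilde C_N$ containing it; the danger is that distinct $\tilde C'_{1,N},\tilde C'_{2,N}$ merge into the same $\tilde C_N$. Ruling this out requires only the one-sided estimate that non-intersecting Brownian loops in $\mathcal{L}^{N^{\theta-2}}$ are at distance $\ge cN^{-1}\log N$ (Proposition~\ref{correspondence1}), which \emph{does} hold for all $16/9<\theta<2$. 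In short: the paper needs the quantitative distance bound but not the quantitative overlap bound, and this asymmetry is what makes the full $\theta$-range accessible.

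Two smaller points. Your step passing from $\mathcal{L}^{t_0}$ to $\mathcal{L}$ underestimates the difficulty: beyond showing that dropping small loops does not disconnect clusters (the paper's Lemma~\ref{existencerho}), one must show that the \emph{exteriors} of the truncated clusters converge, which is where the ``no touchings'' analysis of Section~\ref{sec:notouchings} and the topological Lemmas of Section~\ref{sec:topological} enter (see the proof of Theorem~\ref{finiteapproximation}). And your derivation of (ii) from (i) is too quick: $d_H$-closeness of hulls does not by itself give $d_H$-closeness of outer boundaries; the paper controls both $d_H(C,\tilde C_N)$ and $d_H(\Ext C,\Ext\tilde C_N)$ and then invokes Lemma~\ref{lem:helpfulinclusions}.
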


As an immediate consequence of Theorem \ref{mainthmBLS} and the loop soup construction of conformal loop ensembles by Sheffield and Werner \cite{SheWer}, we have the following corollary:

\begin{corollary}\label{corcle}
Let $D$ be a bounded, simply connected domain, take $\lambda\in(0,1/2]$ and $16/9<\theta<2$. Let $\kappa\in(8/3,4]$ be such that $\lambda = (3\kappa - 8)(6-\kappa) / 4\kappa$. As $N\to\infty$, the collection of outer boundaries of all outermost clusters of $\mathcal{\tilde L}_N^{N^{\theta-2}}$ converges in distribution to $\text{\textnormal{CLE}}_\kappa$, with respect to $d_H^*$.
\end{corollary}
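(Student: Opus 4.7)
The plan is a direct combination of Theorem \ref{mainthmBLS}(ii) with the Sheffield--Werner identification of $\text{CLE}_\kappa$ as the law of outer boundaries of outermost clusters of a Brownian loop soup of appropriate intensity. First, I would invoke part (ii) of Theorem \ref{mainthmBLS} verbatim: under the hypotheses $\lambda\in(0,1/2]$ and $16/9<\theta<2$, the collection of outer boundaries of all outermost clusters of $\mathcal{\tilde L}_N^{N^{\theta-2}}$ converges in distribution, with respect to $d_H^*$, to the collection of outer boundaries of all outermost clusters of a Brownian loop soup $\mathcal{L}$ with intensity $\lambda$ in $D$. This step supplies the convergence; what remains is purely to identify the law of the limit.

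Next, I would invoke the Sheffield--Werner construction \cite{SheWer}: for a Brownian loop soup in a simply connected domain $D$ with intensity $\lambda\in(0,1/2]$, the collection of outer boundaries of its outermost clusters is distributed as $\text{CLE}_\kappa$ in $D$, where $\kappa$ and $\lambda$ are related by a bijection between $(8/3,4]$ and $(0,1/2]$. Under the normalization of the Brownian loop measure $\mu$ adopted in Section \ref{sec:definitions} (the original Lawler--Werner normalization, as discussed in the introduction), the relation is precisely $\lambda=(3\kappa-8)(6-\kappa)/(4\kappa)$, which is the hypothesis of the corollary. Combining the two steps yields the claimed convergence in distribution with respect to $d_H^*$.

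There is no genuine analytical obstacle here; the only delicate point is choosing the right normalization convention so that the formula $\lambda=(3\kappa-8)(6-\kappa)/(4\kappa)$ is the correct translation between CLE parameter and loop soup intensity. Since the definition of $\mu$ given in Section \ref{sec:definitions} coincides with that of \cite{LawWer}, and since the factor-of-$2$ discrepancy with \cite{SheWer} is already accounted for in the introductory discussion and absorbed into the statement of the corollary, the identification of the limit with $\text{CLE}_\kappa$ is immediate.
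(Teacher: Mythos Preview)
Your proposal is correct and matches the paper's own treatment: the corollary is stated as an immediate consequence of Theorem \ref{mainthmBLS}(ii) together with the Sheffield--Werner loop soup construction of CLE, with the normalization issue handled exactly as you describe.
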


Note that since $\theta < 2$, $\mathcal{\tilde L}_N^{N^{\theta-2}}$ contains loops of time length, and hence also diameter, arbitrarily small as $N \to \infty$, so the number of loops in $\mathcal{\tilde L}_N^{N^{\theta-2}}$ diverges as $N\to\infty$. Theorem \ref{mainthmBLS} has an analogue for the random walk loop soup with killing and the massive Brownian loop soup as defined in \cite{Cam}; our proof extends to that case.

We conclude this section by giving an outline of the paper and explaining the structure of the proof of Theorem \ref{mainthmBLS}. The largest part of the proof is to show that, for large $N$, with high probability, for each large cluster $C$ of $\BLS$ there exists a cluster $\tilde C_N$ of $\mathcal{\tilde L}_N^{N^{\theta-2}}$ such that $d_H(\Ext C, \Ext \tilde C_N)$ is small. We will prove this fact in three steps.

\begin{figure}
	\begin{center}
		\includegraphics[scale=0.7]{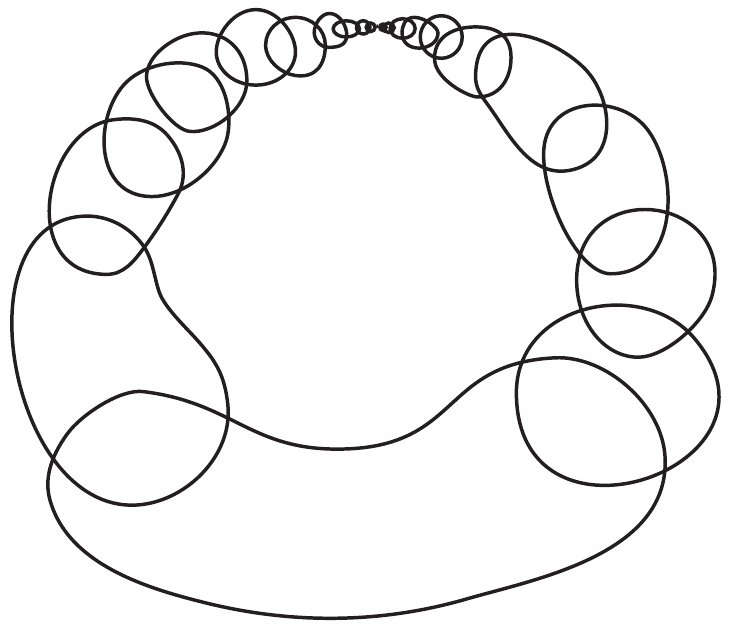}
	\end{center}
	\caption{A cluster whose exterior is not well-approximated by the exterior of any finite subcluster.}
	\label{fig:cluster}
\end{figure}

First, let $C$ be a large cluster of $\BLS$. We choose a finite subcluster $C'$ of $C$ such that $d_H(\Ext C, \Ext C')$ is small. A priori, it is not clear that such a finite subcluster exists -- see, e.g., Figure \ref{fig:cluster} which depicts a cluster containing two disjoint infinite chains of loops at Euclidean distance zero from each other. A proof that, almost surely, a finite subcluster with the desired property exists is given in Section \ref{sec:cluster}, using results from Section \ref{sec:topological}. The latter section contains a number of definitions and preliminary results used in the rest of the paper.

Second, we approximate the finite subcluster $C'$ by a finite subcluster $\tilde C_N'$ of $\mathcal{\tilde L}_N^{N^{\theta-2}}$. Here we use Corollary 5.4 of Lawler and Trujillo Ferreras \cite{LawTru}, which gives that, with probability tending to 1, there is a one-to-one correspondence between loops in $\mathcal{\tilde L}_N^{N^{\theta-2}}$ and loops in $\BLS^{N^{\theta-2}}$ such that corresponding loops are close. To prove that $d_H(\Ext C', \Ext \tilde C_N')$ is small, we need results from Section \ref{sec:topological} and the fact that a planar Brownian loop has no ``touchings'' in the sense of Definition \ref{deftouching} below. The latter result is proved in Section \ref{sec:notouchings}.

Third, we let $\tilde C_N$ be the full cluster of $\mathcal{\tilde L}_N^{N^{\theta-2}}$ that contains $\tilde C_N'$. In Section~\ref{sec:distance} we prove an estimate which implies that, with high probability, for non-intersecting loops in $\BLS^{N^{\theta-2}}$ the corresponding loops in $\mathcal{\tilde L}_N^{N^{\theta-2}}$ do not intersect. We deduce from this that, for distinct subclusters $\tilde C_{1,N}'$ and $\tilde C_{2,N}'$, the corresponding clusters $\tilde C_{1,N}$ and $\tilde C_{2,N}$ are distinct. We use this property to conclude that $d_H(\Ext C, \Ext \tilde C_N)$ is small.

\section{Preliminary results}\label{sec:topological}

In this section we give precise definitions and rigorous proofs of deterministic results which are important tools
in the proof of our main result. Let $\gamma_N$ be a sequence of curves converging uniformly to a curve $\gamma$, i.e.\ $\distinf(\gamma_N,\gamma) \to 0$ as $N \to \infty$, where
\[
\distinf(\gamma,\gamma') = \sup_{s \in [0,1]} |\gamma(st_{\gamma})-\gamma'(st_{\gamma'})| + |t_{\gamma} -t_{\gamma'}|.
\]
The distance $\distinf$ is a natural distance on the space of curves mentioned in Section 5.1 of \cite{Law}. We will identify topological conditions that, imposed on $\gamma$ (and $\gamma_N$), will yield convergence in the Hausdorff distance of the exteriors, outer boundaries and hulls
of $\gamma_N$ to the corresponding sets defined for $\gamma$.
Note that, in general, uniform convergence of the curves does not imply convergence of any of these sets. We define a notion of touching (see Figure~\ref{fig:touching}) and prove that if $\gamma$ has no touchings then the desired convergence follows:

\begin{figure}
	\begin{center}
		\includegraphics[scale=0.7]{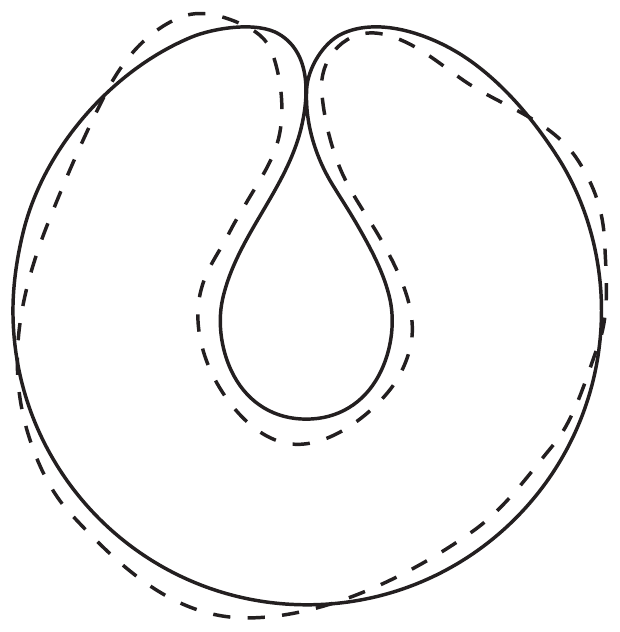}
	\end{center}
	\caption{A curve with a touching, and an approximating curve (dashed).}
	\label{fig:touching}
\end{figure}

\begin{definition}\label{deftouching}
We say that a curve $\gamma$ has a \emph{touching} $(s,t)$ if $0 \leq s < t \leq t_{\gamma}$, $\gamma(s) = \gamma(t)$ and there exists $\delta > 0$ such that for all $\varepsilon \in (0,\delta)$, there exists a curve $\gamma'$ with $t_{\gamma}=t_{\gamma'}$, such that $d_{\infty}(\gamma,\gamma') < \varepsilon$ and $\gamma'[s^-,s^+] \cap \gamma'[t^-,t^+] = \emptyset$, where $(s^-,s^+)$ is the largest subinterval of $[0,t_{\gamma}]$ such that $s^-\leq s\leq s^+$ and $\gamma'(s^-,s^+)\subset B(\gamma(s);\delta)$, and $t^-,t^+$ are defined similarly using $t$ instead of $s$.
\end{definition}

\begin{theorem} \label{thm:curveconv}
Let $\gamma_N,\gamma$ be curves such that $\distinf(\gamma_N,\gamma) \to 0$ as $N\to \infty$, and $\gamma$ has no touchings. Then,
\[
\dist_H(\Ext \gamma_N , \Ext\gamma) \to 0, \
\dist_H( \obound \gamma_N , \obound \gamma) \to 0, \  \text{and}\
 \dist_H( \Hull \gamma_N , \Hull \gamma) \to 0.
\]
\end{theorem}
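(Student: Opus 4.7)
To prove Theorem \ref{thm:curveconv} I would first establish $\dist_H(\Hull \gamma_N, \Hull \gamma) \to 0$ and then deduce the convergences of the exteriors and outer boundaries from it. Since $\Ext \gamma$ and each $\Ext \gamma_N$ contain the complement of a fixed sufficiently large ball, all three Hausdorff distances are finite, and the analysis reduces to a bounded region containing $\gamma$ and all $\gamma_N$.

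Hausdorff convergence of the hulls splits into the two one-sided inclusions $\Hull \gamma_N \subset (\Hull \gamma)^{\varepsilon}$ and $\Hull \gamma \subset (\Hull \gamma_N)^{\varepsilon}$, to be proved for every $\varepsilon>0$ and all $N$ sufficiently large. The first inclusion is routine and does not require the no-touching hypothesis: if it failed, a sequence $z_N \in \Hull \gamma_N$ would converge to some $z \in \Ext \gamma$, but a polygonal arc from $z$ to a far point of $\Ext \gamma$ kept at positive distance from $\gamma$ would avoid $\gamma_N$ for large $N$ by uniform convergence, placing an open neighborhood of $z$ inside $\Ext \gamma_N$ and contradicting $z_N \to z$ with $z_N \in \Hull \gamma_N$.

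The reverse inclusion is the heart of the proof. Arguing by contradiction, suppose there exist $\varepsilon>0$, a subsequence $N_k$, and $z_k \in \Hull \gamma$ with $z_k \to z \in \Hull \gamma$ and $B(z,\varepsilon/2) \subset \Ext \gamma_{N_k}$. Fix $z^{\ast} \in \Ext \gamma$ far from $\gamma$ and let $\pi_k$ be a simple arc from $z$ to $z^{\ast}$ inside $\Ext \gamma_{N_k}$. Since $z \in \Hull \gamma$ and $z^{\ast} \in \Ext \gamma$, the arc $\pi_k$ exits $\Hull \gamma$ for the first time at some point $q_k = \gamma(\sigma_k) \in \obound \gamma$, and after a further extraction $\sigma_k \to \sigma$ and $q_k \to q = \gamma(\sigma)$. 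The goal is then to extract from this data a touching of $\gamma$ at $q$, contradicting the hypothesis. If for some fixed $\delta>0$ the preimage $\gamma^{-1}(\overline{B(q,\delta)})$ consisted of a single time interval, uniform convergence would make $\gamma_{N_k}\cap B(q,\delta)$ a single close-by arc that topologically separates the Hull-side from the Ext-side on $\partial B(q,\delta)$, forcing the chord $\pi_k \cap B(q,\delta)$ to meet $\gamma_{N_k}$, a contradiction. Otherwise $\gamma^{-1}(\overline{B(q,\delta)})$ has at least two components for every $\delta>0$, and one would combine this multi-branched structure with the topological separation provided by $\pi_k$ to produce distinct times $\sigma \neq \tau$ with $\gamma(\sigma)=\gamma(\tau)=q$ such that, for all small $\delta$ and all large $k$, the arcs $\gamma_{N_k}[s^-,s^+]$ and $\gamma_{N_k}[t^-,t^+]$ defined by the largest windows in Definition \ref{deftouching} lie on opposite sides of $\pi_k$ inside $B(q,\delta)$ and are therefore disjoint, exhibiting $\gamma_{N_k}$ as a perturbation witnessing a touching of $\gamma$ at $(\sigma,\tau)$.

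Once hull convergence is in hand, $\dist_H(\Ext \gamma_N, \Ext \gamma) \to 0$ follows by standard set-theoretic manipulations using the openness of the exteriors and their agreement outside a large fixed ball, and $\dist_H(\obound \gamma_N, \obound \gamma) \to 0$ follows from $\obound \gamma = \partial \Hull \gamma$ together with a second application of the no-touching hypothesis to rule out small bubbles of $\obound \gamma_N$ forming in the interior of $\Hull \gamma$ without analog on $\obound \gamma$. The main obstacle is the touching extraction in the hull step: one must argue precisely that the escape route $\pi_k$ provides the separating perturbation required by Definition \ref{deftouching}, tracking the largest-window intervals $(s^-,s^+)$ and $(t^-,t^+)$ as $\delta \to 0$, and carefully handling the delicate case in which multiple branches of $\gamma$ come close to, but do not all pass through, the limit point $q$.
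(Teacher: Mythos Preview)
Your strategy is genuinely different from the paper's, and the point you flag as ``the main obstacle'' is indeed a real gap, not just a detail to be filled in.

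The paper does not argue by contradiction at a single boundary point. It first turns the no-touching hypothesis into a positive connectivity statement (Lemma~\ref{lem:touchingconv}): whenever $s,t$ are $\delta$-connected in $\gamma$, they are $4\delta$-connected in every sufficiently close $\gamma_N$. It then parametrizes $\obound\gamma$ as a loop of winding number one around each point of $\Hull\gamma\setminus\obound\gamma$ (Lemma~\ref{boundarycurve}), discretizes it, and uses the connectivity lemma to rebuild, inside $\IC\setminus\wp$, a loop uniformly close to $\obound\gamma$ whenever $\wp$ is a path in $\Ext\gamma_N$ (Lemma~\ref{lem:deltaconv}). That rebuilt loop still winds once around any point deep in $\Hull\gamma$, so no such $\wp$ can start there. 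This yields $\Ext\gamma_N\subset(\Ext\gamma)^\delta$ directly, and all three convergences follow from Proposition~\ref{prop:topological}. The argument is global: it never does a local case analysis at a chosen $q\in\obound\gamma$.

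Your local extraction runs into trouble in both cases. In the single-branch case you assert that $\gamma_{N_k}\cap B(q,\delta)$ is ``a single close-by arc that topologically separates'' the Hull and Ext sides, forcing $\pi_k$ to meet it. But the single branch $\gamma|_I$ can itself be a complicated curve with many self-intersections (none of them touchings), and the claim that every $d_\infty$-close arc $\gamma_{N_k}|_I$ inherits its separating property is precisely a local instance of the theorem you are proving; nothing you have said rules out $\gamma_{N_k}|_I$ leaving a channel for $\pi_k$. In the multi-branch case you must exhibit a \emph{fixed} pair $(\sigma,\tau)$ and a \emph{fixed} $\delta>0$ for which the largest-window arcs of $\gamma_{N_k}$ around $\sigma$ and $\tau$ are disjoint for \emph{all} large $k$; but the separating path $\pi_k$ varies with $k$, may cross $B(q,\delta)$ many times rather than as a single chord, and when $\gamma$ has three or more branches at $q$ different $\pi_k$ may place different pairs of branches on opposite sides. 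You give no mechanism for stabilizing this choice.

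A secondary issue: Hausdorff convergence of hulls does not yield Hausdorff convergence of exteriors by ``standard set-theoretic manipulations''. The direction $\Ext\gamma_N\subset(\Ext\gamma)^\delta$ does not follow from $\Hull\gamma\subset(\Hull\gamma_N)^\delta$ together with $d_H(\gamma_N,\gamma)\to 0$, so reordering the three conclusions does not let you bypass the hard inclusion.
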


To prove the main result of this paper, we will also need to deal with similar convergence issues for sets defined by collections of curves. For two collections of curves $C,C'$ let
\begin{align*}
d^*_{\infty}(C,C') = & \inf \{ \delta>0 : \forall \gamma\in C \,\exists \gamma'\in C' \text{ such that } \dist_{\infty}(\gamma,\gamma') < \delta, \\
& \text{and } \forall \gamma'\in C' \,\exists \gamma\in C \text{ such that } \dist_{\infty}(\gamma,\gamma') < \delta \}.
\end{align*}
We will also need a modification of the notion of touching:

\begin{definition}\label{defmutualtouching}
Let $\gamma_1$ and $\gamma_2$ be curves.
We say that the pair $\gamma_1,\gamma_2$ has a \emph{mutual touching} $(s,t)$ if $0\leq s\leq t_{\gamma_1}$, $0\leq t\leq t_{\gamma_2}$, $\gamma_1(s)=\gamma_2(t)$
and there exists $\delta>0$ such that for all $\varepsilon\in(0,\delta)$, there exist curves $\gamma_1'$, $\gamma_2'$ with $t_{\gamma_1} =t_{\gamma_1'}$, $t_{\gamma_2} =t_{\gamma_2'}$, such that
$d_{\infty}(\gamma_1,\gamma_1')<\varepsilon$, $d_{\infty}(\gamma_2,\gamma_2')<\varepsilon$ and $\gamma_1'[s^-,s^+]\cap \gamma_2'[t^-,t^+] =\emptyset$,
where $(s^-,s^+)$ is the largest subinterval of $[0,t_{\gamma_1}]$ such that $s^-\leq s\leq s^+$ and $\gamma_1'(s^-,s^+)\subset B(\gamma_1(s);\delta)$, and $t^-,t^+$ are
defined similarly using $\gamma_2$ and $t$, instead of $\gamma_1$ and $s$.
\end{definition}

\begin{definition}\label{deftouchingcollection}
We say that a collection of curves has a \emph{touching} if it contains a curve that has a touching or it contains a pair of distinct curves that have a mutual touching.
\end{definition}

The next result is an analog of Theorem~\ref{thm:curveconv}.
\begin{theorem} \label{thm:collectionconv}
Let $C_N,C$ be collections of curves such that $\distinf^*(C_N,C) \to 0$ as $N\to \infty$, and $C$ contains finitely many curves and $C$ has no touchings. Then,
\[
\dist_H(\Ext C_N , \Ext C) \to 0, \
\dist_H( \obound C_N , \obound C) \to 0, \ \text{and}\
 \dist_H( \Hull C_N , \Hull C) \to 0.
\]
\end{theorem}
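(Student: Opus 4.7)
The plan is to adapt the proof of Theorem~\ref{thm:curveconv} to the multi-curve setting, with Definition~\ref{deftouchingcollection} replacing Definition~\ref{deftouching} throughout. Write $C=\{\gamma^1,\dots,\gamma^k\}$. Since $C$ is finite and $d_\infty^*(C_N,C)\to 0$, for $N$ large one can label some curves in $C_N$ as $\gamma^1_N,\dots,\gamma^k_N$ with $d_\infty(\gamma^i_N,\gamma^i)\to 0$ for each $i$, and every remaining curve in $C_N$ lies within vanishing $d_\infty$-distance of some $\gamma^i$. In particular $\bigcup C_N\to \bigcup C$ in Hausdorff distance. The main task is to prove $d_H(\Hull C_N,\Hull C)\to 0$; convergence of exteriors and outer boundaries then follows by general topological arguments, since all $\Hull C_N$ lie in a common bounded set and the no-touchings hypothesis prevents $\obound C$ from containing points inaccessible from infinity.

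The inclusion $\Hull C_N\subset(\Hull C)^{\varepsilon}$ uses only uniform convergence. If some $z_N\in\Hull C_N$ satisfied $d(z_N,\Hull C)\geq\varepsilon$ along a subsequence, then $z_N\in\Ext C$ would sit at distance at least $\eta>0$ from $\bigcup C$, and a polygonal escape path from $z_N$ to infinity in $\Ext C$ at distance $\geq\eta/2$ from $\bigcup C$ would remain an escape path avoiding $\bigcup C_N$ for $N$ large, contradicting $z_N\in\Hull C_N$. The reverse inclusion $\Hull C\subset(\Hull C_N)^{\varepsilon}$ is where the no-touchings hypothesis is essential. Arguing by contradiction, suppose $z\in\Hull C$ has $d(z,\Hull C_N)\geq\varepsilon$ along a subsequence. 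Then $z\in\Ext C_N$, so there is a polygonal escape path $\pi_N$ from $z$ to infinity in $\Ext C_N$. After truncation outside a large ball and an application of Arzel\`a--Ascoli, a subsequential limit $\pi_\infty$ exists, joining $z$ to the ball boundary. Since $z\in\Hull C$ while the ball boundary lies in $\Ext C$, $\pi_\infty$ must cross $\bigcup C$ at some point $p=\gamma^i(s)=\gamma^j(t)$ (with possibly $i=j$). For large $N$, near $p$ the path $\pi_N$ avoids $\bigcup C_N$ and therefore squeezes between two short arcs of $\gamma^i_N$ and $\gamma^j_N$ that approximate the arcs of $\gamma^i$ and $\gamma^j$ through $p$. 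These arcs realize perturbations of $\gamma^i$ and $\gamma^j$ satisfying the disjointness $\gamma^i_N[s^-,s^+]\cap\gamma^j_N[t^-,t^+]=\emptyset$, thereby certifying a self- or mutual touching of $C$ at $(s,t)$, and contradicting the hypothesis.

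The main technical obstacle will be the squeezing step: one must convert the bare existence of the avoidance path $\pi_N$ into explicit perturbations satisfying the precise requirements of Definitions~\ref{deftouching} and~\ref{defmutualtouching}. Care is needed to choose a suitable spatial scale for the squeeze (the $\delta$ of the touching definitions) and possibly to glue the local perturbations coming from $\gamma^i_N,\gamma^j_N$ near $p$ with the unperturbed curves $\gamma^i,\gamma^j$ outside $B(p;\delta)$, so that the global $d_\infty$-closeness demanded by the touching definitions holds. Once $d_H(\Hull C_N,\Hull C)\to 0$ is established in this way, the conclusions for $\Ext$ and $\obound$ are obtained as consequences by standard topological reasoning, exactly as in the passage from hulls to exteriors and outer boundaries in Theorem~\ref{thm:curveconv}.
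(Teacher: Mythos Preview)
Your proposal takes a genuinely different route from the paper, and it has a gap that comes before the squeezing step you already flag as problematic.

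The Arzel\`a--Ascoli application is unjustified: the polygonal escape paths $\pi_N\subset\Ext C_N$ come with no control on length or modulus of continuity, so equicontinuity fails and no uniform subsequential limit is available. You can recover a compact connected Hausdorff limit $\pi_\infty$ via Blaschke selection, but then $\pi_\infty$ is only a continuum, not a curve, and the phrase ``$\pi_\infty$ crosses $\bigcup C$ at a point $p$'' loses its meaning. More fundamentally, even granting some limit object, your assertion that the intersection point satisfies $p=\gamma^i(s)=\gamma^j(t)$ with two \emph{distinct} preimages is unsupported: a point of $\obound C$ can perfectly well be simple (a single preimage in a single curve), and at such a point there is only one local arc of $C$, hence no ``two short arcs'' for $\pi_N$ to squeeze between and no touching to manufacture. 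The reason $z$ cannot escape $\Hull C_N$ is a global winding-number obstruction along all of $\obound C$, not a local phenomenon at any single boundary point; your contradiction scheme attempts to localize this, and that is where it breaks.

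The paper instead argues constructively. It observes that the multi-curve versions of Lemmas~\ref{lem:touchingconv} and~\ref{lem:deltaconv} go through: a finite collection with no touchings, intersected with a sufficiently small ball, behaves like a single curve, so $\delta$-connectedness in $C$ transfers to $4\delta$-connectedness in $C_N$; then, parametrizing the outer boundary of each connected component of $C$ by a loop with winding $2\pi$ (Lemma~\ref{boundarycurve}), one rebuilds a nearby loop inside $C_N$ with the same winding, giving $\Ext C_N\subset(\Ext C)^\delta$ directly. Proposition~\ref{prop:topological} then yields all three convergences at once. Note also that the paper deduces hull and outer-boundary convergence \emph{from} the exterior inclusion via Lemma~\ref{lem:helpfulinclusions}, not the other way around; your claimed reverse reduction (``$\Ext$ and $\obound$ follow from $\Hull$ by general topological arguments'') is not automatic and would itself require justification.
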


The remainder of this section is devoted to proving Theorems~\ref{thm:curveconv} and~\ref{thm:collectionconv}.
We will first identify a general condition for the convergence of exteriors, outer boundaries and hulls in the setting of arbitrary bounded subsets of the plane. We will prove that if a curve does not have any touchings, then this condition is satisfied and hence Theorem~\ref{thm:curveconv} follows. At the end of the section, we will show how to obtain Theorem~\ref{thm:collectionconv} using similar arguments.

\begin{proposition} \label{prop:topological}
Let $A_N,A$ be bounded subsets of the plane such that $\dist_H(A_N, A) \to 0$ as $N\to \infty$. Suppose that for every $\delta >0$ there exists $N_0$ such that, for all $N>N_0$,
$\Ext  A_N \subset  (\Ext  A)^{\delta}$. Then,
\[
\dist_H(\Ext A_N , \Ext A) \to 0, \
\dist_H( \obound A_N , \obound A) \to 0, \  \text{and}\
 \dist_H( \Hull A_N , \Hull A) \to 0.
\]
 \end{proposition}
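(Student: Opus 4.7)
The plan is to prove the three Hausdorff convergences in turn, for the exterior, the hull, and the outer boundary. The hypothesis already supplies $\Ext A_N\subset (\Ext A)^\delta$ and, through $d_H(A_N,A)\to 0$, the inclusions $A\subset A_N^\delta$ and $A_N\subset A^\delta$ for $N$ large, so the crux is to establish the reverse inclusion $\Ext A\subset (\Ext A_N)^\delta$; the two remaining convergences will then be deduced from it.

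For the exterior convergence I would first prove a pointwise statement: for each $x\in\Ext A$ there is $N_x$ such that $x\in \Ext A_N$ for all $N\geq N_x$. Since $\Ext A$ is open, connected and contains every point of sufficiently large modulus, $x$ can be joined to a faraway point $z_0$ (with $|z_0|$ exceeding the diameters of $A$ and $A_N$, so that $z_0\in \Ext A_N$ for $N$ large) by a continuous path contained in $\Ext A$. The image of this path is compact and disjoint from the closed set $\overline A$, hence at positive distance $\eta>0$ from it; by $A_N\subset A^{\eta/2}$ the same image is at distance $\geq\eta/2$ from $A_N$, so it lies in $\IC\setminus\overline{A_N}$ and joins $x$ to $z_0\in \Ext A_N$, forcing $x\in\Ext A_N$. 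To upgrade this to the uniform estimate $\Ext A\subset(\Ext A_N)^\delta$ I would argue by contradiction and compactness: if there exist $x_k\in\Ext A$ and $N_k\to\infty$ with $d(x_k,\Ext A_{N_k})\geq\delta$, the $x_k$ are bounded (large moduli are automatic) and admit a subsequential limit $x_\infty\in \overline{\Ext A}=\Ext A\cup\obound A$. If $x_\infty\in\Ext A$ the pointwise argument applied at $x_\infty$ immediately contradicts $d(x_k,\Ext A_{N_k})\geq\delta$; if instead $x_\infty\in\obound A=\partial\Ext A$, I would approximate $x_\infty$ to within $\delta/2$ by a point $y\in\Ext A$ (using openness) and apply the pointwise argument to $y$ to reach the same contradiction.

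For $d_H(\Hull A_N,\Hull A)\to 0$ I would prove the two inclusions directly. The inclusion $\Hull A\subset(\Hull A_N)^\delta$ is obtained by dichotomizing $x\in\Hull A$ according to $d(x,\overline A)$: if $d(x,\overline A)<\delta/2$ then $x$ is close to some $a\in A$, hence to $A_N\subset\Hull A_N$; if $d(x,\overline A)\geq\delta/2$ then $x$ lies in a bounded component of $\IC\setminus\overline A$ and the straight segment from $x$ to any $y\in\Ext A$ must cross $\overline A$, which gives $d(x,\Ext A)\geq d(x,\overline A)\geq\delta/2$, so the proposition's hypothesis forces $x\in\Hull A_N$ outright. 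The reverse inclusion $\Hull A_N\subset(\Hull A)^\delta$ uses the exterior result: if $d(x,\Hull A)>\delta$ then $x\in\Ext A$, and the uniform estimate just proved provides a point $y\in\Ext A_N$ with $|x-y|<\delta/2$; the segment $[x,y]$, going from $\Hull A_N$ to $\Ext A_N$, crosses $\obound A_N\subset\overline{A_N}$ at a point within $\delta/2$ of $x$, which is in turn within $\delta/2$ of some $q\in A\subset\Hull A$ for $N$ large, contradicting $d(x,\Hull A)>\delta$.

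The outer-boundary convergence $d_H(\obound A_N,\obound A)\to 0$ follows from an analogous segment-crossing argument. For $x\in\obound A$ one uses both $x\in\overline{\Ext A}$, which produces a nearby point of $\Ext A_N$ via the exterior result, and $\obound A\subset\overline A$, which produces a nearby point of $A_N\subset\Hull A_N$; the straight segment between these two must cross $\obound A_N$ within a constant multiple of $\delta$ of $x$. The symmetric inclusion $\obound A_N\subset(\obound A)^\delta$ is proved identically with the roles of $A$ and $A_N$ exchanged. The main obstacle is the exterior step, and within it the boundary case $x_\infty\in\obound A$: one must know that $\partial\Ext A$ is approached from \emph{within} $\Ext A$ (immediate from openness) and also that $\obound A\subset\overline A$ (which follows because $\Ext A$ is a connected component of $\IC\setminus\overline A$), so that the compactness argument can be closed.
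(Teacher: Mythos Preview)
Your proof is correct and follows essentially the same route as the paper. The paper factors the argument into two lemmas: Lemma~\ref{lem:topological} establishes the reverse inclusion $\Ext A\subset(\Ext A_N)^\delta$ by exactly your path-plus-compactness argument (it goes directly to the contradiction rather than isolating a pointwise step first, but the path in $\Ext A$ at positive distance from $\overline A$ is the same core idea), and Lemma~\ref{lem:helpfulinclusions} deduces the one-sided hull and outer-boundary inclusions from $d_H(A,A')<\delta$ together with $\Ext A\subset(\Ext A')^\delta$, using that a ball meeting both $\Ext A'$ and $\Hull A'$ must meet $\obound A'$ --- which is the ball version of your segment-crossing argument. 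Applying that lemma in both directions (once with the hypothesis, once with the reverse inclusion just proved) gives the two-sided bounds, just as you do.
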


To prove Proposition \ref{prop:topological}, we will first prove that one of the inclusions required for the convergence of exteriors is always satisfied under the assumption that $\dist_H(A_N, A) \to 0$. For sets $A,A'$ let $d_{\mathcal{E}}(A,A')$ be the \emph{Euclidean distance} between $A$ and $A'$.

\begin{lemma} \label{lem:topological}
Let $A_N,A$ be bounded sets such that $\dist_H(A_N, A) \to 0$ as $N\to \infty$. Then, for every $\delta >0$, there exists~$N_0$ such that for all $N>N_0$, $\Ext A \subset (\Ext A_N)^{\delta}$.
\end{lemma}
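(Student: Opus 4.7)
The plan is a proof by contradiction exploiting the path-connectedness of $\Ext A$. Suppose the conclusion fails: there exist $\delta>0$, a sequence $N_k\to\infty$, and points $x_k\in\Ext A$ with $\dist(x_k,\Ext A_{N_k})\geq\delta$ for every~$k$. First we bound the $x_k$'s: choosing $R>0$ so large that $A$ and all $A_N$ for $N$ large lie in $B(0;R-1)$ forces $\{|z|\geq R\}\subset\Ext A_N$, so each $x_k\in B(0;R)$. Passing to a subsequence, we may assume $x_k\to x^*\in\overline{B(0;R)}$.

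The heart of the argument is to exhibit a single \emph{witness} point $y\in\Ext A$ with $|y-x^*|<\delta/3$ such that $y\in\Ext A_{N_k}$ for all $k$ large. Given such a $y$, choosing $k$ also so large that $|x_k-x^*|<\delta/3$, the triangle inequality yields $\dist(x_k,\Ext A_{N_k})\leq|x_k-y|<2\delta/3$, contradicting $\dist(x_k,\Ext A_{N_k})\geq\delta$.

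To produce $y$, we fix $k_0$ with $|x_{k_0}-x^*|<\delta/3$ and set $y:=x_{k_0}\in\Ext A$. Because $\Ext A$ is an open connected subset of $\IC$, it is path-connected; choose a continuous path $\gamma:[0,1]\to\Ext A$ from $y$ to some point $z_\infty$ with $|z_\infty|\geq R$. The image $\gamma([0,1])$ is a compact subset of the open set $\IC\setminus\overline{A}$, so $\eta:=\dist(\gamma([0,1]),\overline{A})>0$. Since $\overline{A_N}\subset\{z:\dist(z,A)\leq d_H(A_N,A)\}$, whenever $N$ is large enough that $d_H(A_N,A)<\eta$ the path $\gamma$ avoids $\overline{A_N}$, so it connects $y$ to $z_\infty\in\Ext A_N$ through $\IC\setminus\overline{A_N}$, placing $y$ in the unbounded component, namely $\Ext A_N$. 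In particular $y\in\Ext A_{N_k}$ for all $k$ sufficiently large, closing the contradiction.

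The delicate point is the choice of $y$: we cannot simply take $y=x^*$, because $x^*$ may lie on $\partial\Ext A\subset\overline{A}$, in which case every path from $x^*$ into $\Ext A$ starts arbitrarily close to $\overline{A}$ and no positive $\eta$ is available. The trick is to retreat slightly from $x^*$ to a nearby $x_{k_0}\in\Ext A$; this costs at most $\delta/3$, which the triangle inequality absorbs within the tolerance $\delta$, but buys a strictly positive $\eta$, after which the path-based argument proceeds uniformly as $k\to\infty$.
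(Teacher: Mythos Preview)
Your proof is correct and follows essentially the same approach as the paper: argue by contradiction, extract a convergent subsequence of the bad points, pick a nearby witness $y\in\Ext A$, and use a path from $y$ to infinity in $\Ext A$ (at positive distance from $\overline{A}$) to conclude $y\in\Ext A_N$ for all large $N$, yielding the contradiction. The only cosmetic difference is that the paper chooses $y$ as an arbitrary point of $\Ext A$ within $\delta/4$ of the limit $x$ (using $x\in\overline{\Ext A}$), whereas you take $y=x_{k_0}$ for suitable $k_0$; both choices serve the same purpose.
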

\begin{proof}
Suppose that the desired inclusion does not hold. This means that there exists $\delta>0$ such that, after passing to a subsequence,
$\Ext A \not \subset (\Ext A_N)^{\delta}$ for all~$N$. This is equivalent to the existence of $x_N \in \Ext A$, such that $\dist_{\mathcal{E}}(x_N,\Ext A_N) \geq \delta$.
Since $\dist_H(A_N,A) \to 0$ and the sets are bounded, the sequence $x_N$ is bounded and we can assume
that $x_N\to x \in \overline{\Ext A}$ when $N \to \infty$.
It follows that for $N$ large enough, $\dist_{\mathcal{E}}(x,\Ext A_N) > \delta/2$ and hence $B(x;\delta/2)$ does not intersect $\Ext A_N$.
We will show that this leads to a contradiction. To this end, note that since $x\in \overline{\Ext A}$, there exists $y \in \Ext A$ such that $|x-y|<\delta/4$.
Furthermore, $\Ext A$ is an open connected subset of~$\IC$, and hence it is path connected. This means that there exists
a continuous path connecting $y$ with $\infty$ which stays within $\Ext A$. We denote by~$\wp$ its range in the complex
plane. Note that $\dist_{\mathcal{E}}(\wp,\overline{A})>0$. For $N$ sufficiently large, $\dist_H(\overline{A_N},\overline{A})<\dist_{\mathcal{E}}(\wp,\overline{A})$
and so $\overline{A_N}$ does not intersect $\wp$. This implies that $\overline{A_N}$ does not
disconnect $y$ from $\infty$. Hence, $y \in \Ext A_N$ and $B(x;\delta/2)$ intersects $\Ext A_N$ for $N$ large enough, which is a contradiction.
This completes the proof.
\end{proof}

\begin{lemma} \label{lem:helpfulinclusions}
Let $A,A'$ be bounded sets and let $\delta>0$. If $\dist_H(A,A')<\delta$ and $\Ext A \subset (\Ext A')^{\delta}$,
then $\obound A \subset (\obound A')^{2\delta}$ and $\Hull A' \subset (\Hull A)^{2\delta}$.
\end{lemma}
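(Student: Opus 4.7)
The plan is to prove both inclusions by a common strategy: for each $x$ in the set to be covered, produce a witness point in the target set at distance strictly less than $2\delta$. The key geometric tool will be that any line segment from a point of $\Hull A'$ to a point of $\Ext A'$ meets $\obound A' = \partial \Hull A' = \partial \Ext A'$; this follows from an intermediate-value argument applied to the parametrization $t \mapsto (1-t)y + tz$ of such a segment, using that $\Ext A'$ is open and $\Hull A'$ is closed. Throughout I will freely use the elementary inclusions $\overline{A'} \subset \Hull A'$ and $\obound A \subset \overline{A}$, which follow directly from the definitions of hull, exterior and outer boundary.

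For the first inclusion $\obound A \subset (\obound A')^{2\delta}$, I would fix $x \in \obound A$ and split into two cases. If $x \in \Hull A'$, the relation $x \in \partial \Ext A$ provides points of $\Ext A$ arbitrarily close to $x$; each such point is within $\delta$ of some $w \in \Ext A'$ by the hypothesis $\Ext A \subset (\Ext A')^\delta$, so for any $\eta > 0$ there is $w \in \Ext A'$ with $|x - w| < \delta + \eta$, and the segment-crossing argument yields $p \in \obound A'$ with $|x - p| \leq |x - w| < \delta + \eta$. If instead $x \in \Ext A'$, the inclusion $\obound A \subset \overline{A}$ lets me approximate $x$ by points of $A$; each of those lies within $\delta$ of a point of $A' \subset \Hull A'$ by $d_H(A,A') < \delta$, and the segment from $x$ to such a point again meets $\obound A'$ at a point $p$ with $|x - p| < \delta + \eta$. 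Taking $\eta < \delta$ gives $|x - p| < 2\delta$ in either case.

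For the second inclusion $\Hull A' \subset (\Hull A)^{2\delta}$, I would fix $x \in \Hull A'$ and split on whether $x \in \Hull A$ (in which case $x$ is its own witness) or $x \in \Ext A$. In the latter case, the one-sided hypothesis gives $w \in \Ext A'$ with $|x - w| < \delta$, and the segment from $x \in \Hull A'$ to $w \in \Ext A'$ yields $p \in \obound A' \subset \overline{A'}$ with $|x - p| < \delta$. A short limiting argument using $A' \subset A^\delta$ and the boundedness of $A$ then produces a point $a \in \overline{A} \subset \Hull A$ with $|p - a| \leq \delta$. Combining the strict bound $|x - p| < \delta$ with the weak bound $|p - a| \leq \delta$ gives $|x - a| < 2\delta$, which is the required strict inequality for membership in the open neighbourhood $(\Hull A)^{2\delta}$.

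I do not expect any substantive obstacle: the lemma is a purely deterministic, metric-topological statement, and every ingredient is elementary. The only point requiring care is the bookkeeping of strict versus weak inequalities, since the conclusion demands membership in the \emph{open} neighbourhoods $(\obound A')^{2\delta}$ and $(\Hull A)^{2\delta}$. This is handled automatically because, in each sum of two distance bounds, at least one summand is strict, so the total is strictly less than $2\delta$.
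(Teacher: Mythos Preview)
Your proof is correct and follows essentially the same strategy as the paper: both arguments produce, near each point $x$, one point of $\Hull A'$ and one of $\Ext A'$ within $2\delta$, then use connectedness to find a point of $\obound A'$ nearby; and for the hull inclusion both route through $\obound A' \subset \overline{A'} \subset A^{\delta} \subset (\Hull A)^{\delta}$. The only differences are cosmetic: the paper uses the ball $B(x;2\delta)$ as the connected set meeting both $\Hull A'$ and $\Ext A'$ (avoiding your case split), and for the second inclusion it writes the argument as a set-algebraic complement of $\Ext A \subset \Ext A' \cup (\obound A')^{\delta}$ rather than as a pointwise case analysis, but the content is the same.
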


\begin{proof}
We start with the first inclusion.
From the assumption, it follows that $\overline{A} \subset (A')^{\delta}$ and $\overline{\Ext A} \subset (\Ext A')^{2\delta}$. Take $x \in \obound A$.
Since $\obound A \subset \overline{A} \subset (A')^{\delta} \subset (\Hull A')^{\delta} $, we have that $B (x; \delta) \cap \Hull A' \neq \emptyset$.
Since $\obound A \subset \overline{\Ext A} \subset  (\Ext A')^{2\delta}$, we have that $B (x; 2\delta) \cap \Ext A'\neq \emptyset$.
The ball $B (x; 2\delta)$ is connected and intersects both $\Ext A'$ and its complement $\Hull A'$. This implies that $B (x; 2\delta) \cap \obound A' \neq \emptyset$.
The choice of $x$ was arbitrary, and hence $\obound A \subset (\obound A')^{2\delta}$.

We are left with proving the second inclusion. From the assumption, it follows that
$\overline{A'} \subset A^{\delta}$ and $\Ext A \subset (\Ext A')^{\delta}$. Since $\obound A' \subset \overline{A'} \subset A^{\delta} \subset (\Hull A)^{\delta}$,
we have that $(\obound A')^{\delta} \subset (\Hull A)^{2\delta}$.
Since $\Ext A \subset (\Ext A')^{\delta} = \Ext A' \cup (\obound A')^{\delta}$, by taking complements we have
that $\Hull A' \setminus (\obound A')^{\delta} \subset \Hull A \subset (\Hull A)^{2 \delta}$.
By taking the union with $(\obound A')^{\delta}$, we obtain that $\Hull A' \subset (\Hull A)^{2\delta}$.
\end{proof}

\begin{proof}[Proof of Proposition~\ref{prop:topological}]
It follows from Lemmas~\ref{lem:topological} and~\ref{lem:helpfulinclusions}.
\end{proof}

\begin{remark} \label{remark:topological}
In the proof of Theorem~\ref{mainthmBLS}, we will use equivalent formulations of Theorem~\ref{thm:collectionconv} and Lemma~\ref{lem:topological} in terms of metric rather than
sequential convergence. The equivalent formulation of Lemma~\ref{lem:topological} is as follows: For any bounded set $A$ and $\delta>0$, there exists $\varepsilon >0$ such that
if $\dist_H(A,A')<\varepsilon$, then $\Ext A \subset (\Ext A')^{\delta}$. The equivalent formulation of Theorem~\ref{thm:collectionconv} is similar.
\end{remark}

Without loss of generality, from now till the end of this section, we assume that all curves have time length $1$
(this can always be achieved by a linear time change).

\begin{definition}
We say that $s,t \in [0,1]$ are \emph{$\delta$-connected} in a curve $\gamma$ if there exists an open ball $B$ of diameter $\delta$ such that $\gamma(s)$ and $\gamma(t)$
are connected in $\gamma \cap B$.
\end{definition}

\begin{lemma} \label{lem:touchingconv}
Let $\gamma_N,\gamma$ be curves such that $\distinf(\gamma_N,\gamma)\to 0$ as $N \to \infty$, and $\gamma$ has no touchings. Then for any $\delta>0$ and $s,t$ which are $\delta$-connected in $\gamma$, there exists $N_0$ such that $s,t$ are $4\delta$-connected in $\gamma_N$ for all $N>N_0$.
\end{lemma}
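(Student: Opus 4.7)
The plan is to let $B = B(c;\delta/2)$ be the open ball of diameter $\delta$ witnessing that $s$ and $t$ are $\delta$-connected in $\gamma$, and to show that the enlarged ball $B' = B(c;2\delta)$ witnesses $4\delta$-connectedness of $s,t$ in $\gamma_N$ for all sufficiently large $N$. The strategy is first to reduce the connection inside $\gamma \cap B$ to a \emph{finite} chain of excursions of $\gamma$ in $B$ glued together at self-intersections, and then to verify that each link survives small uniform perturbations, the excursion pieces via uniform convergence and the self-intersection gluings via the no-touchings hypothesis.

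For the first step I decompose $\gamma^{-1}(B)$ into its open-interval components $\{I_\alpha\}$ and form the graph $G$ on $\{I_\alpha\}$ with an edge $I_\alpha \sim I_\beta$ whenever $\gamma(I_\alpha) \cap \gamma(I_\beta) \neq \emptyset$. Let $K$ be the connected component of $\gamma \cap B$ containing $\gamma(s)$ and $\gamma(t)$; since each $\gamma(I_\alpha)$ is connected it lies entirely in or entirely outside $K$, so $K = \bigcup_{\alpha \in T} \gamma(I_\alpha)$ for $T := \{\alpha : \gamma(I_\alpha) \subset K\}$. The key subclaim is that the induced subgraph $G|_T$ is connected. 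If instead $T = T_1 \sqcup T_2$ with no cross-edges, then connectedness of $K$ in $\IC$ produces a point $p \in K_2 \cap \overline{K_1}$, where $K_j := \bigcup_{\alpha \in T_j} \gamma(I_\alpha)$. Writing $p = \gamma(r)$ with $r \in I_\delta$, $\delta \in T_2$, and $p = \lim \gamma(r_n)$ with $r_n \in I_{\alpha_n}$, $\alpha_n \in T_1$, a subsequential limit $r^*$ of $r_n$ lies in $\gamma^{-1}(B)$ because $\gamma(r^*) = p \in B$; whether $r^* = r$ or $r^* \neq r$, the fact that $r^*$ sits in some open component $I_\beta$ of $\gamma^{-1}(B)$ forces either $\alpha_n = \delta \in T_2$ (contradicting $\alpha_n \in T_1$) or $p \in \gamma(I_\beta) \cap \gamma(I_\delta)$ with $\beta \in T_1$, $\delta \in T_2$ (contradicting the absence of cross-edges). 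Connectedness of $G|_T$ then supplies a finite path $I_s = J_0, J_1, \ldots, J_m = I_t$, and reading off the overlap points of consecutive $J_i, J_{i+1}$ unpacks this path into a finite chain of times $s = u_0, v_0, u_1, v_1, \ldots, u_m, v_m = t$ such that each pair $(u_i, v_i)$ lies in the common interval $J_i$ (so the $\gamma$-arc between them stays in $B$) and each pair $(v_i, u_{i+1})$ satisfies $\gamma(v_i) = \gamma(u_{i+1}) \in B$.

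For the second step I pick $\rho > 0$ with each ball $B(\gamma(v_i);\rho)$ contained in $B'$; there is slack of at least $3\delta/2$ around $B$ inside $B'$. The no-touchings hypothesis applied to each pair $(v_i, u_{i+1})$ supplies $\varepsilon_i > 0$ such that every curve $\gamma'$ with $t_{\gamma'} = t_\gamma$ and $\distinf(\gamma', \gamma) < \varepsilon_i$ satisfies $\gamma'[v_i^-, v_i^+] \cap \gamma'[u_{i+1}^-, u_{i+1}^+] \neq \emptyset$ inside $B(\gamma(v_i);\rho) \subset B'$, where the subintervals are the maximal local ones of Definition~\ref{deftouching}. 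Taking $N_0$ so large that $\distinf(\gamma_N, \gamma)$ is simultaneously smaller than $\min_i \varepsilon_i$ and small enough that every excursion arc $\gamma_N([\min(u_i, v_i), \max(u_i, v_i)])$ lies in $B'$, the $\gamma_N$-arcs of the excursions concatenated with the guaranteed $\gamma_N$-intersections at each self-intersection site form a connected subset of $\gamma_N \cap B'$ joining $\gamma_N(s)$ to $\gamma_N(t)$, establishing $4\delta$-connectedness via $B'$.

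The main obstacle is the subclaim that $G|_T$ is connected, which must rule out configurations in which $K$ is glued together only through limit points on $\partial B$ or through a would-be accumulation of self-intersections without any genuine overlap of excursion images. The argument above uses in an essential way that $B$ is open and $\gamma$ is a compact curve, so that every limit point of $K$ lying inside $B$ is realized by some time belonging to an open component of $\gamma^{-1}(B)$ and therefore forces a true edge of $G$. Once this topological fact is secured, graph-theoretic connectedness delivers the finite chain for free, and the remainder of the proof is a routine combination of uniform convergence with the no-touchings hypothesis.
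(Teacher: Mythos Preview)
Your proof is correct and follows the same overall scheme as the paper's --- reduce the connection in $\gamma \cap B$ to a finite chain of excursion arcs glued at self-intersections, then use uniform convergence for the arcs and the no-touchings hypothesis at the gluings --- but the mechanism for obtaining a \emph{finite} chain is different. The paper avoids your graph-connectedness subclaim by working with two concentric balls: instead of taking all components of $\gamma^{-1}(B(x;\delta/2))$ (of which there may be infinitely many), it considers only those excursions in $B(x;\delta)$ that actually reach $B(x;\delta/2)$; each such excursion has diameter at least $\delta/2$, so uniform continuity of $\gamma$ immediately forces their number to be finite, and since $\gamma\cap B(x;\delta/2)$ is covered by these finitely many closed connected sets $\gamma[I_i]$, one can simply reorder them into a chain. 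This two-radius trick is shorter and sidesteps the topological argument you need to rule out ``phantom'' connections through accumulation. Your approach, on the other hand, works directly with the single witnessing ball $B$ and proves a clean general fact --- that the intersection graph of the preimage components lying in a fixed planar connected component is itself graph-connected --- which is of some independent interest and makes the passage from topological to combinatorial connectedness explicit. Either route delivers the finite chain, after which the endgame (applying the negation of Definition~\ref{deftouching} at each self-intersection and concatenating inside $B(x;2\delta)$) is the same.
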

\begin{proof}
Fix $\delta >0$. If the diameter of $\gamma$ is at most $\delta$, then it is enough to take~$N_0$ such that $\distinf(\gamma_N,\gamma) < \delta$ for $N>N_0$.

Otherwise, let $s,t\in[0,1]$ be $\delta$-connected in $\gamma$ and let $x$ be such that $\gamma(s)$ and $\gamma(t)$ are in the same connected component of $\gamma \cap B(x;\delta/2)$.
We say that $I=[a,b] \subset [0,1]$ defines an \emph{excursion} of $\gamma$
from $\partial B(x;\delta)$ to $B(x;\delta/2)$ if $I$ is a maximal interval satisfying
\[
 \gamma(a,b) \subset B(x;\delta) \quad  \text{and} \quad  \gamma(a,b) \cap B(x;\delta/2) \neq \emptyset.
\]
Note that if $[a,b]$ defines an excursion, then the diameter of $\gamma[a,b]$ is at least $\delta/2$. Since $\gamma$ is uniformly continuous, it follows that there are only finitely many excursions.
Let $I_i=[a_i,b_i]$, $i=1,2,\ldots,k$, be the intervals which define them.

It follows that $\gamma \cap B(x;\delta/2) \subset \bigcup_{i=1}^k \gamma[ I_i]$, and hence $\gamma(s)$ and $\gamma(t)$ are in the same connected component
of $\bigcup_{i=1}^k \gamma[ I_i]$. If $s,t \in I_i$ for some $i$, then it is enough to take $N_0$ such that $\distinf(\gamma_N, \gamma) < \delta$ for $N>N_0$, and the claim of the lemma follows. Otherwise, using the fact that $\gamma[I_i]$ are closed, connected sets, one can reorder the intervals in such a way that
$s \in I_1$, $t \in I_l$, and $\gamma[I_i] \cap \gamma[I_{i+1}] \neq \emptyset$ for $i =1,\ldots, l-1$. Let $(s_i,t_i)$ be such that $s_i \in I_i$, $t_i \in I_{i+1}$, and $\gamma(s_i)=\gamma(t_i)=z_i$.
Since $(s_i,t_i)$ is not a touching, we can find $\varepsilon_i \in (0, \delta)$ such that
$\gamma'(s_i)$ is connected to $\gamma'(t_i)$ in $\gamma' \cap B(z_i;\delta)$ for all $\gamma'$ with $\dist(\gamma,\gamma')<\varepsilon_i$.
Hence, if $N_0$ is such that $\dist(\gamma_N,\gamma) < \min\{ \varepsilon, \delta\}$ for $N>N_0$, where $\varepsilon =\min_i \varepsilon_i$,
then $\gamma_N(s)$ and $\gamma_N(t)$ are connected in $\bigcup_{i=1}^l \gamma_N[I_i] \cup( \gamma_N\cap \bigcup_{i=1}^{l-1} B(z_i;\delta)) $,
and therefore also in $\gamma_N \cap B(x;2\delta)$.
\end{proof}

\begin{lemma} \label{boundarycurve}
If $\gamma$ is a curve, then there exists a loop whose range is $\obound \gamma$ and whose winding around each point
of $\Hull \gamma \setminus \obound \gamma$ is equal to $2\pi$.
\end{lemma}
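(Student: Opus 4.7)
The plan is to realize the desired loop as the boundary values of a conformal map from the exterior of the unit disk onto $\Ext\gamma$, invoking Carath\'eodory's boundary extension theorem. Since $\gamma$ is the continuous image of $[0,1]$, its range is a Peano continuum: compact, connected, and locally connected. The classical Torhorst theorem states that the boundary of every complementary domain of a locally connected continuum in the plane is itself locally connected; applied to the unique unbounded component of $\IC\setminus\gamma$, it yields that $\obound\gamma=\partial\Ext\gamma$ is locally connected. Moreover, since $\Hull\gamma$ is connected in the Riemann sphere $\widehat{\IC}$, the region $\Ext\gamma$ is simply connected in $\widehat{\IC}$.

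By the Riemann mapping theorem, fix a conformal isomorphism $\phi\colon\{z\in\IC:|z|>1\}\cup\{\infty\}\to\Ext\gamma$ with $\phi(\infty)=\infty$, normalized so that its Laurent expansion at $\infty$ has positive leading coefficient, $\phi(z)=cz+O(1)$ with $c>0$. Local connectedness of $\obound\gamma$ together with Carath\'eodory's theorem produce a continuous extension $\bar\phi$ of $\phi$ to $\{|z|\geq 1\}$ that maps the unit circle continuously onto $\obound\gamma$. Setting $\eta(s):=\bar\phi(e^{2\pi is})$ for $s\in[0,1]$ yields a loop whose range is exactly $\obound\gamma$.

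To compute the winding, fix $z_0\in\Hull\gamma\setminus\obound\gamma$. For each $R>1$, the loop $\eta_R(s):=\phi(Re^{2\pi is})$ lies in $\Ext\gamma$ and therefore avoids $z_0$; as $R$ varies over $(1,\infty)$ these loops form a continuous homotopy in $\IC\setminus\{z_0\}$, so their winding number about $z_0$ is independent of $R$. For $R$ large, the normalization of $\phi$ forces $\eta_R$ to be a small relative perturbation of the circle $s\mapsto cRe^{2\pi is}$, which winds once counterclockwise around $z_0$. Letting $R\downarrow 1$, uniform continuity of $\bar\phi$ gives $\eta_R\to\eta$ uniformly; since $\eta$ avoids $z_0$, for $R$ sufficiently close to $1$ the loop $\eta_R$ is homotopic to $\eta$ in $\IC\setminus\{z_0\}$, so the winding number is preserved in the limit. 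Thus $\eta$ winds by $2\pi$ around $z_0$, as required.

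The main obstacle is the invocation of Torhorst's boundary theorem (local connectedness of $\obound\gamma$), a classical but nontrivial result from plane topology; once this is in hand, Carath\'eodory's extension theorem and the standard homotopy argument for the winding number complete the proof.
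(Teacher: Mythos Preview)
Your proof is correct and follows essentially the same approach as the paper: conformally map the exterior of the unit disk onto $\Ext\gamma$, extend continuously to the boundary to obtain a loop parametrizing $\obound\gamma$, and compute the winding number via the homotopy given by images of concentric circles. The only cosmetic differences are that the paper packages the boundary extension into a citation of \cite{BurLaw} rather than invoking Torhorst and Carath\'eodory explicitly, and it anchors the winding number by observing that the image of $\{|z|=1+r\}$ is a simple curve (hence winds once around each interior point) rather than by comparing to a large circle at $R\to\infty$.
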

\begin{proof}
Let $D'=\{ x \in \IC: |x| >1\}$. By the proof of Theorem 1.5(ii) of~\cite{BurLaw}, there exists a one-to-one conformal map $\varphi$ from $D'$ onto $\Ext \gamma$
which extends to a continuous function $\overline{\varphi} : \overline{D'} \to \overline{\Ext \gamma}$, and such that $\overline{\varphi}[\partial D'] = \obound \gamma$.
Let $\gamma_r(t)=\overline{\varphi}(e^{it2\pi}(1+r))$ for $t \in [0,1]$ and $r\geq0$. It follows that the range of $\gamma_0$ is $\obound \gamma$.
Moreover, since $\varphi$ is one-to-one, $\gamma_r$ is a simple curve for $r>0$ and hence its winding around every point of $\Hull \gamma \setminus \obound \gamma$ is equal to $2\pi$.
Since $\distinf(\gamma_0,\gamma_r) \to 0$ when $r \to 0$, the winding of  $\gamma_0$ around every point of $\Hull \gamma \setminus \obound \gamma$ is also equal to~$2\pi$.
\end{proof}

\begin{lemma} \label{lem:deltaconv}
Let $\gamma_N,\gamma$ be curves such that $\distinf(\gamma_N,\gamma)\to 0$ as $N \to \infty$. Suppose that for any $\delta>0$ and $s,t$ which are $\delta$-connected in $\gamma$, there exists $N_0$ such that
$s,t$ are $4\delta$-connected in $\gamma_N$ for all $N>N_0$. Then, for every $\delta >0$, there exists $N_0$ such that for all $N>N_0$,
$\Ext  \gamma_N \subset  (\Ext  \gamma)^{\delta}$.
\end{lemma}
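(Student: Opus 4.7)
The plan is to argue by contradiction and to transfer a winding obstruction from $\gamma$ to $\gamma_N$ using the $\delta$-connectedness hypothesis. First I would negate the conclusion and, by the same compactness argument as in the first paragraph of the proof of Lemma~\ref{lem:topological}, pass to a subsequence to find $\delta>0$ and points $x_N\in\Ext\gamma_N$ with $d_{\mathcal{E}}(x_N,\Ext\gamma)\geq\delta$ and $x_N\to x$. Then $d_{\mathcal{E}}(x,\Ext\gamma)\geq\delta$, so $x$ lies in the interior of $\Hull\gamma$; in particular $x\notin\obound\gamma$, and for $N$ large one also has $x_N\in\Hull\gamma\setminus\obound\gamma$.

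Next I would apply Lemma~\ref{boundarycurve} to obtain a loop $\gamma_0$ with range $\obound\gamma$ whose winding around every point of $\Hull\gamma\setminus\obound\gamma$, and hence around $x$ and (for $N$ large) around $x_N$, equals $2\pi$. Fix a small $\eta>0$ (much smaller than $\delta$, to be adjusted at the end) and, using uniform continuity of $\gamma_0$, pick $0=u_0<u_1<\cdots<u_k=1$ so fine that each arc $\gamma_0[u_{i-1},u_i]$ has diameter less than $\eta$. Since $y_i:=\gamma_0(u_i)\in\obound\gamma\subset\gamma$, one can choose $s_i\in[0,1]$ with $\gamma(s_i)=y_i$. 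The arc $\gamma_0[u_{i-1},u_i]$ is then a connected subset of $\gamma$ of diameter less than $\eta$ that contains both $\gamma(s_{i-1})$ and $\gamma(s_i)$, so $s_{i-1}$ and $s_i$ are $2\eta$-connected in $\gamma$.

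Applying the hypothesis to each of the finitely many pairs $(s_{i-1},s_i)$ yields $N_0$ such that, for every $N>N_0$ and every $i$, there is a ball $B_i$ of diameter $8\eta$ and a connected set $K_i\subset\gamma_N\cap B_i$ joining $\gamma_N(s_{i-1})$ to $\gamma_N(s_i)$. The union $K:=\bigcup_i K_i$ is then a compact connected subset of $\gamma_N$ lying within Hausdorff distance $O(\eta)+\distinf(\gamma_N,\gamma)$ of $\gamma_0$. Choosing $\eta$ sufficiently small compared to $\delta$, and $N$ so large that $\distinf(\gamma_N,\gamma)<\eta$ and $|x_N-x|<\eta$, one ensures both that $x_N$ stays at distance at least $\delta/2$ from $K$ and that every point of $\gamma_0$ lies within $O(\eta)$ of $K$.

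The hard part will be the final step: deducing that this compact connected set $K\subset\gamma_N$ disconnects $x_N$ from infinity, which then forces $x_N\in\Hull\gamma_N$ and contradicts $x_N\in\Ext\gamma_N$. I plan to do this by thickening $K$ to the closed $\eta$-neighborhood $\overline{K^{\eta}}$: for $\eta$ sufficiently small this neighborhood contains the loop $\gamma_0$, and homotopy invariance of the winding number then implies that the component of $\IC\setminus\overline{K^{\eta}}$ containing $x_N$ is bounded, since $\gamma_0$ has nonzero winding around $x_N$. Equivalently, any continuous path from $x_N$ to infinity in $\IC\setminus K$ stays, inside a fixed large ball, at positive distance from the compact set $K$, hence for sufficiently small $\eta$ also avoids $\gamma_0$, contradicting the fact that $\gamma_0$ has winding $2\pi$ around $x_N$.
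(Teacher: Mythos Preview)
Your overall strategy coincides with the paper's: negate the conclusion, invoke Lemma~\ref{boundarycurve} to get the outer-boundary loop $\gamma_0$, discretise $\gamma_0$, pull the partition points back to times in $\gamma$, apply the $\delta$-connectedness hypothesis to get connected pieces $K_i\subset\gamma_N\cap B_i$ forming a cyclic chain $K=\bigcup_i K_i$, and then derive a contradiction via winding.

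The gap is in the last paragraph. Neither of your two suggested ways of turning ``$K$ is Hausdorff-close to $\gamma_0$'' into ``$K$ (hence $\gamma_N$) separates $x_N$ from $\infty$'' is valid as stated. For the thickening argument: you only conclude that $\overline{K^{\eta}}$ separates $x_N$ from $\infty$, but $\overline{K^\eta}\not\subset\gamma_N$, so this does not contradict $x_N\in\Ext\gamma_N$. A compact connected $K$ lying in a thin tube around a winding loop need not itself separate (think of an arc that is almost a full circle with a gap of width $\eta/10$). For the ``equivalently'' argument: you fix $\eta$ first and only afterwards produce the escape path $p\subset\Ext\gamma_N$; its positive distance to $K$ may well be much smaller than $2\eta$, so you cannot conclude that $p$ avoids $\gamma_0$. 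The quantifiers are in the wrong order.

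The missing trick (which the paper uses) is to build the separating loop not inside $\gamma_N$ but inside the complement of the escape path. Take a path $\wp\subset\Ext\gamma_N$ from $x_N$ to $\infty$. Since $K_i\subset\gamma_N\cap B_i$ and $\wp\cap\gamma_N=\emptyset$, the two endpoints $\gamma_N(s_{i-1})$ and $\gamma_N(s_i)$ lie in the same connected component of the \emph{open} set $B_i\setminus\wp$; open connected subsets of $\IC$ are path-connected, so one can choose an arc $\gamma^*_i\subset B_i\setminus\wp$ joining them. Concatenating the $\gamma^*_i$ yields an honest loop $\gamma^*\subset\IC\setminus\wp$ with $\distinf(\gamma^*,\gamma_0)=O(\eta)$, hence with winding $2\pi$ around $x_N$ once $\eta$ is small enough relative to $\delta$. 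That contradicts $\wp$ reaching $\infty$. The point is that path-connectedness is borrowed from the open balls minus $\wp$, not from the (merely connected) sets $K_i$ themselves.
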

\begin{proof}
Fix $\delta >0$. By Lemma~\ref{boundarycurve}, let $\gamma_0$ be a loop whose range is $\obound \gamma$ and
whose winding around each point of $\Hull \gamma \setminus \obound \gamma$ equals $2\pi$.  Let
\[
0=t_0 < t_1 < \ldots < t_l =1,
\]
be a sequence of times satisfying
\[
t_{i+1} = \inf \{ t \in [t_i, 1]: | \gamma_0(t)- \gamma_0(t_i)| = \delta/32 \big\} \quad \text{for} \quad  i = 0,\ldots, l-2,
\]
and $| \gamma_0(t)- \gamma_0(t_{l-1})| < \delta/32 $ for all $t\in [t_{l-1},1)$.
This is well defined, i.e.\ $l < \infty$, since $ \gamma_0$ is uniformly continuous. Note that $t_i$ and $t_{i+1}$ are $\delta/8$-connected in $ \gamma_0$.
For each~$t_i$, we choose a time $\tau_i$, such that $\gamma(\tau_i)= \gamma_0(t_i)$ and $\tau_l=\tau_0$. It follows that $\tau_i$ and $\tau_{i+1}$ are $\delta/8$-connected in $\gamma$.
Let $N_i$ be so large that $\tau_i$ and $\tau_{i+1}$ are $\delta/2$-connected in $\gamma_N$ for all $N> N_i$, and let $M=\max_i N_i$.
The existence of such $N_i$ is guaranteed by the assumption of the lemma.

Let $M' > M$ be such that $\distinf (\gamma_N,\gamma) < \delta /16$ for all $N>M'$. Take $N>M'$. We will show that $\Ext  \gamma_N \subset  (\Ext  \gamma)^{\delta}$.
Suppose by contradiction, that $x \in  \Ext \gamma_N \cap (\IC \setminus (\Ext  \gamma)^{\delta})=  \Ext \gamma_N \cap   (\Hull \gamma \setminus  (\obound \gamma)^{\delta}) $. Since $\Ext \gamma_N$ is open and connected, it is path connected
and there exists a continuous path~$\wp$ connecting $x$ with $\infty$ and such that $\wp \subset \Ext \gamma_N$.

We will construct a loop $\gamma^*$ which is contained in $ \IC \setminus \wp$, and which disconnects~$x$
from~$\infty$. This will yield a contradiction. By the definition of~$M$, for $i =0,\ldots, l-1$, there exists an open ball $B_i$ of diameter
$\delta/2$, such that $\gamma_N(\tau_i)$ and $\gamma_N(\tau_{i+1})$ are connected in $\gamma_N\cap B_i$, and hence also in $B_i \setminus \wp$.
Since the connected components of $B_i \setminus \wp$ are open, they are path connected and there exists a curve $\gamma^*_i$ which starts at  $\gamma_N(\tau_i)$, ends at $\gamma_N(\tau_{i+1})$,
and is contained in $B_i \setminus \wp$.
By concatenating these curves, we construct the loop~$\gamma^*$, i.e.\
\[
\gamma^*(t)= \gamma^*_i\Big(\frac{t-t_i}{t_{i+1}-t_i}\Big)\quad \text{for} \quad t \in [t_i,t_{i+1}], \quad i=0,\ldots, l-1.
\]

By construction, $\gamma^* \subset \IC \setminus \wp$.
We will now show that $\gamma^*$ disconnects $x$ from $\infty$ by proving that its winding around $x$ equals $2\pi$.
By the definition of $t_{i+1}$, $\gamma_0(t_i,t_{i+1}) \subset B( \gamma_0(t_i); \delta/16)$. Since $\distinf(\gamma_N,\gamma)<\delta/16$ and $\gamma_0(t_i)=\gamma(\tau_i)$, it follows that
$\gamma_0(t_i,t_{i+1})\subset B(\gamma_N(\tau_i);\delta/8)$. By the definition of $\gamma^*_i$, $\gamma^*_i \subset B_i \subset B(\gamma_N(\tau_i); \delta/2)$.
Combining these two facts, we conclude that $\distinf( \gamma_0, \gamma^*) <5 \delta/8$. Since the winding of $ \gamma_0$ around every point of
$\Hull \gamma \setminus \obound \gamma$ is equal to $2\pi$, and since $x \in \Hull\gamma$ and $\dist_{\mathcal{E}}(x, \gamma_0) \geq \delta$, the winding of $\gamma^*$
around $x$ is also equal to $2\pi$. This means that $\gamma^*$ disconnects $x$ from~$\infty$, and hence $\wp \cap \gamma^* \neq \emptyset$, which is a contradiction.
\end{proof}

\begin{proof}[Proof of Theorem~\ref{thm:curveconv}]
It is enough to use Proposition~\ref{prop:topological}, Lemma~\ref{lem:touchingconv} and Lemma~\ref{lem:deltaconv}.
\end{proof}

\begin{proof}[Proof of Theorem~\ref{thm:collectionconv}]

The proof follows similar steps as the proof of Theorem~\ref{thm:curveconv}.
To adapt Lemma~\ref{lem:touchingconv} to the setting of collections of curves, it is enough to notice that a finite collection of nontrivial curves, when intersected with a ball of sufficiently small radius, looks like a single curve intersected with the ball. To generalize Lemma~\ref{lem:deltaconv}, it suffices to notice that the outer boundary of each connected component of $C$ is given by a curve as in Lemma~\ref{boundarycurve}. \qedhere
\end{proof}

\section{Finite approximation of a Brownian loop soup cluster}\label{sec:cluster}

Let $\mathcal{L}$ be a Brownian loop soup with intensity $\lambda\in(0,1/2]$ in a bounded, simply connected domain $D$. The following theorem is the main result of this section.

\begin{theorem} \label{finiteapproximation}
Almost surely, for any cluster $C$ of~$\BLS$, there exists a sequence of finite subclusters $C_N$ of $C$ such that as $N\to \infty$,
\[
\dist_H(\Ext C_N , \Ext C) \to 0, \
\dist_H( \obound C_N , \obound C) \to 0, \  \text{and}\
 \dist_H( \Hull C_N , \Hull C) \to 0.
\]
\end{theorem}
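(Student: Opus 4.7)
The plan is to apply Proposition~\ref{prop:topological}, identifying each finite subcluster (and $C$ itself) with its range $\bigcup_{\gamma}\gamma\subset\IC$. It then suffices to produce, for each cluster $C$ of $\BLS$, a nested sequence of finite subclusters $C_N\subset C$ satisfying (a) $d_H(C_N,C)\to 0$, and (b) for every $\delta>0$, $\Ext C_N\subset (\Ext C)^\delta$ for all $N$ large enough; Proposition~\ref{prop:topological} then delivers the three claimed Hausdorff convergences.

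For the construction, I would first note that $\BLS$ is countable, since the Brownian loop measure $\mu$ is finite on $\{\gamma\subset D:\mathrm{diam}(\gamma)\geq\varepsilon\}$ for every $\varepsilon>0$, as one sees from the $1/t_0^2$ factor in the definition of $\mu$ together with the fact that a Brownian loop of time length $t_0$ has typical diameter of order $\sqrt{t_0}$. Enumerate the loops of $C$ as $\gamma_1,\gamma_2,\ldots$ in decreasing order of diameter, fix once and for all a finite chain in $C$ connecting $\gamma_i$ and $\gamma_j$ for each pair $i,j$, and let $C_N$ be the union of all these chains for $i,j\leq N$. By construction $C_N$ is a finite subcluster of $C$, the sequence is increasing, and $\bigcup_N C_N=C$.

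Condition (a) should follow from the finite-mass property: for every $\varepsilon>0$ only finitely many loops of $C$ have diameter $\geq\varepsilon$, and these all lie in $C_N$ once $N$ is large; any remaining loop $\gamma\in C\setminus C_N$ then has diameter smaller than $\varepsilon$, and since $\gamma$ is connected via a chain in $C$ to $C_N$, a short-chain argument, combined with the observation that the union of a chain of loops each of diameter $<\varepsilon$ intersecting $C_N$ lies in an $\varepsilon$-neighborhood of $C_N$, places $\gamma$ within Hausdorff distance $O(\varepsilon)$ of $C_N$.

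The main obstacle is (b). As Figure~\ref{fig:cluster} illustrates, a careless finite approximation may leave a substantial region of $\Ext C_N$ outside any prescribed neighborhood of $\Ext C$ even when $d_H(C_N,C)$ is small. To handle this I would invoke the Sheffield--Werner result \cite{SheWer} that, for $\lambda\leq 1/2$, the outer boundary $\obound C$ of a loop soup cluster is almost surely a simple closed curve, so that $\Hull C$ is a Jordan domain. Given $\delta>0$, I would cover the compact curve $\obound C$ by finitely many open balls of radius $\delta/4$ and, inside each, select finitely many loops of $C$ whose union contains a crosscut locally separating the inner from the outer side of $\Hull C$. Enlarging $C_N$ to include these finitely many additional loops then forces $\Ext C_N\subset(\Ext C)^\delta$ by a planar topological argument. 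The delicate point, which I expect to be the real work of Section~\ref{sec:cluster}, is the almost sure existence of such finite local separating subcollections; this should follow from the density of Brownian loops of $C$ along $\obound C$ together with the absence of double points on the Jordan curve $\obound C$, the latter ensuring that only finitely many separating pieces are needed to close up the approximation in each of the covering balls.
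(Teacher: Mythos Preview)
Your argument for (a) contains a false claim: the union of a chain of loops each of diameter $<\varepsilon$ whose first loop meets $C_N$ does \emph{not} lie in the $\varepsilon$-neighborhood of $C_N$; such a chain can travel arbitrarily far. Condition (a) is nonetheless easy: since $\overline{C}$ is compact, the $C_N$ are increasing, and $\bigcup_N C_N=C$, a standard compactness argument gives $d_H(C_N,C)\to 0$. So this is a minor slip.

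The real gap is (b). You correctly identify the crucial step as the existence of ``finite local separating subcollections'' near each boundary point, but your proposed justification --- density of loops along $\obound C$ together with simplicity of the Jordan curve $\obound C$ --- does not do the job. Simplicity of $\obound C$ does \emph{not} rule out the scenario of Figure~\ref{fig:cluster}: two disjoint infinite chains of loops in $C$ can accumulate at a single point $z$ (whether or not $z\in\obound C$) without creating a double point of $\obound C$. In that situation any finite subcollection leaves a gap near $z$ through which $\Ext C_N$ leaks deep into $\Hull C$, and no amount of ``density'' helps, because density only tells you loops are nearby, not that finitely many of them connect up. What is actually needed is a structural input about the loop soup itself: Lemma~\ref{nonzerodistance} (a loop not intersecting a subcluster is at positive Euclidean distance from it) and Lemma~\ref{rationalballs} (no two clusters of the restricted soup in a ball are at distance zero). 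These are precisely what exclude the two-chains-accumulating picture, and they are not consequences of the CLE simple-loop statement.

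For comparison, the paper does not use simplicity of $\obound C$ at all. Instead it parametrizes $\overline{C}$ and the $C_N$ by loops $\ell,\ell_N$ (Lemma~\ref{convergingloops}) and verifies the $\delta$-connectedness hypothesis of Lemma~\ref{lem:deltaconv} directly, via a ``pieces'' decomposition of $C$ inside small balls: one cuts loops into pieces, groups them into clusters of pieces crossing an annulus, shows using Lemmas~\ref{nonzerodistance} and~\ref{rationalballs} that distinct clusters of pieces are separated inside the smaller ball, and uses Lemma~\ref{existencerho} to guarantee each such cluster of pieces is captured by $C_N$ for $N$ large. Your covering-by-balls picture is not far in spirit, but the separation property you need inside each ball is exactly what the paper's lemmas supply and what your sketch does not.
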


We will need the following result.
\begin{lemma} \label{convergingloops}
 Almost surely, for each cluster $C$ of $\BLS$, there exists a sequence of finite subclusters $C_N$ increasing to~$C$ (i.e.\ $C_N \subset C_{N+1}$ for all $N$ and $\bigcup_N C_N =C$), and a sequence of loops $\ell_N:[0,1] \to \IC$ converging uniformly to a loop
$\ell:[0,1] \to \IC$, such that the range of $\ell_N$ is equal to
$C_N$, and hence the range of $\ell$ is equal to~$\overline{C}$.
\end{lemma}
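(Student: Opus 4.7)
The plan is to first construct the limit loop $\ell\colon[0,1]\to\IC$ as a continuous depth-first traversal of the tree of loops in~$C$ via a nested interval decomposition, and then to obtain $\ell_N$ from $\ell$ by collapsing the intervals corresponding to loops outside $C_N$ to points. Uniform convergence will then follow from the uniform continuity of $\ell$ and the decay of the interval sizes.

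Since $\BLS$ is a Poissonian realization of $\lambda\mu_D$ and $\mu_D(\{\gamma:\diam(\gamma)\geq\delta\})<\infty$ for every $\delta>0$ (by a routine estimate using the Brownian bridge density), almost surely the set $\{\gamma\in C:\diam(\gamma)\geq\delta\}$ is finite for each $\delta>0$. For each $k\geq 1$, I let $\tilde\Gamma_k$ be a minimal finite subcluster of~$C$ containing every $C$-loop of diameter at least $2^{-k}$, obtained by adjoining to that finite family a finite $C$-chain between each pair. Then $\tilde\Gamma_k\nearrow C$, and I enumerate the loops of~$C$ as $\gamma_1,\gamma_2,\ldots$ by listing $\tilde\Gamma_1$ first in a BFS order from some root $\gamma_1$, then $\tilde\Gamma_2\setminus\tilde\Gamma_1$ in an order that keeps each prefix connected, and so on. Each prefix $C_N:=\{\gamma_1,\ldots,\gamma_N\}$ is then a connected subcluster, $\diam(\gamma_n)\to 0$, and every $n\geq 2$ has a parent $p(n)<n$ with $\gamma_{p(n)}\cap\gamma_n\neq\emptyset$. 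I fix $z_n\in\gamma_{p(n)}\cap\gamma_n$ and reparametrize $\gamma_n$ on $[0,1]$ so that $\gamma_n(0)=\gamma_n(1)=z_n$ for $n\geq 2$. In the resulting rooted tree every descendant of $\gamma_n$ has index strictly greater than~$n$.

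Next, I set $\alpha_n:=2^{-n}$ and define inductively nested closed subintervals $I_n\subset[0,1]$ of length $s_n:=\sum_{m\in\mathrm{subtree}(n)}\alpha_m\leq 2^{-n+1}$: take $I_1=[0,1]$, and having defined $I_n$, place the children intervals $\{I_m:p(m)=n\}$ inside $I_n$, pairwise disjointly and in the order in which $\gamma_n$ visits the attachment points $z_m$, with each $I_m$ centered at the rescaled visit time. On the ``own'' part $J_n:=I_n\setminus\bigcup_{m:\,p(m)=n} I_m$, of measure $\alpha_n$, define $\ell$ as the natural order-preserving reparametrization of $\gamma_n$ that skips over the child intervals. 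By construction $\ell$ is continuous at every boundary $\partial I_m$, both sides giving the value $z_m$. For $t\in[0,1]\setminus\bigcup_n J_n$, which necessarily lies in an infinite nested chain $I_{n_1}\supsetneq I_{n_2}\supsetneq\cdots$, define $\ell(t):=\lim_k z_{n_k}$; the limit exists because $|z_{n_{k+1}}-z_{n_k}|\leq\diam(\gamma_{n_k})\to 0$.

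Finally, for each~$N$, call an index $n>N$ an $N$-\emph{frontier} if $p(n)\leq N$; then the intervals $\{I_n:n\text{ is an $N$-frontier}\}$ are pairwise disjoint and each $t\in[0,1]$ lies in at most one of them. I set $\ell_N(t):=z_n$ for $t\in I_n$ with $n$ an $N$-frontier, and $\ell_N(t):=\ell(t)$ otherwise. Then $\ell_N$ is continuous (equal to $z_n$ on $\partial I_n$ and on its interior, hence continuous across $\partial I_n$), is a loop, and has range $C_N$. Moreover $\ell_N(t)\neq\ell(t)$ only when $t$ lies in $I_n$ for some $N$-frontier $n$, in which case $|\ell_N(t)-\ell(t)|=|z_n-\ell(t)|\leq\omega_\ell(|I_n|)\leq\omega_\ell(2^{-N})$, where $\omega_\ell$ denotes the modulus of continuity of $\ell$. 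Hence $\|\ell_N-\ell\|_\infty\leq\omega_\ell(2^{-N})\to 0$ by uniform continuity of $\ell$, completing the uniform convergence. The main obstacle in this plan is establishing the continuity of $\ell$ itself, particularly at points where the children intervals of $\gamma_n$ accumulate on a subinterval of $\gamma_n$; the crucial input here is the local finiteness of large loops in the soup, which ensures that if the attachment points $z_m$ of children of $\gamma_n$ cluster near some point of $\gamma_n$ then their diameters must tend to~$0$.
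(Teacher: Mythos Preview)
Your construction is essentially a direct attempt at the argument of Sheffield--Werner (Lemma~9.7 of~\cite{SheWer}), which is precisely what the paper cites here, so the overall strategy is the right one. However, there is a genuine gap in the step you yourself flag as the main obstacle.

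The Cauchy argument for the limit points is incorrect as stated: for $t$ lying in an infinite nested chain $I_{n_1}\supsetneq I_{n_2}\supsetneq\cdots$, you claim that $\lim_k z_{n_k}$ exists because $|z_{n_{k+1}}-z_{n_k}|\leq\diam(\gamma_{n_k})\to 0$. But consecutive differences tending to zero does \emph{not} imply convergence of the sequence. What is actually required is that the diameter of the \emph{entire subtree-set} rooted at $n_k$ (i.e.\ the union of $\gamma_m$ over all descendants $m$ of $n_k$) tends to~$0$, since every $z_{n_j}$ with $j\geq k$ lies in that set. The same subtree-diameter estimate is also what you need for continuity of $\ell$ at accumulation points of child intervals: your closing remark about local finiteness only controls the diameters of the \emph{immediate} children of $\gamma_n$, but a child of small diameter can have descendants forming a long chain that wanders far away, so controlling one generation is not sufficient.

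The missing ingredient is the annulus-crossing estimate, Lemma~9.6 of~\cite{SheWer}, which in this paper appears as Lemma~\ref{existencerho}: almost surely, for every $\delta>0$ there is $t_0>0$ such that every subcluster of $\BLS$ of diameter larger than~$\delta$ contains a loop of time length larger than~$t_0$. Since there are only finitely many loops of time length larger than~$t_0$, each with a fixed index in your enumeration, and since the subtree at $n_k$ contains only loops of index at least $n_k$, it follows that the subtree diameter at $n_k$ tends to~$0$ as $k\to\infty$. With this in hand, both the Cauchy property of $(z_{n_k})$ and the continuity of~$\ell$ go through; without it, the construction is incomplete.
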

\begin{proof}
This follows from the proof of Lemma~9.7 in~\cite{SheWer}.
Note that in~\cite{SheWer}, a cluster $C$ is replaced by the collection of simple loops $\eta$ given by the outer boundaries of $\gamma \in C$.
However, the same argument works also for $C$ and the loops $\gamma$.
\end{proof}

To prove Theorem \ref{finiteapproximation}, we will show that the loops $\ell_N$, $\ell$ from Lemma~\ref{convergingloops} satisfy the conditions of Lemma~\ref{lem:deltaconv}. Then, using Proposition \ref{prop:topological} and Lemma~\ref{lem:deltaconv}, we obtain Theorem \ref{finiteapproximation}. We will first prove some necessary lemmas.

\begin{lemma} \label{nonzerodistance}
Almost surely, for all $\gamma \in \BLS$ and all subclusters $C$ of $\BLS$ such that $\gamma$ does not intersect $C$, it holds that
$\dist_{\mathcal{E}}(\gamma, C) >0$.
\end{lemma}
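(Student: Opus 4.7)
The plan is to argue by contradiction. Suppose that, on a positive-probability event, there exist $\gamma \in \BLS$ and a subcluster $C$ of $\BLS$ with $\gamma \cap \bigcup_{\gamma'\in C}\gamma' = \emptyset$ but $\dist_{\mathcal{E}}(\gamma,C) = 0$. Then I can extract a sequence of distinct loops $\gamma_n \in C$ with $\dist_{\mathcal{E}}(\gamma_n,\gamma) \to 0$. The goal is to show that this scenario has probability zero.

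The first step is to force $\diam(\gamma_n) \to 0$. The Brownian loop measure $\mu_D$, restricted to loops of time length at least $t_0$, has finite total mass $\lambda A(D)/(2\pi t_0)$. Hence, by the Poissonian structure of $\BLS$, almost surely only finitely many loops in $\BLS$ have time length $\geq t_0$, for any $t_0 > 0$. Combined with a standard Brownian motion estimate relating time length and diameter (a Brownian motion of time length $t$ has diameter exceeding $\delta$ with probability decaying exponentially in $\delta^2/t$), this implies that almost surely only finitely many loops in $\BLS$ intersecting any fixed compact subset of $D$ have diameter $\geq \delta$. Applied to a compact neighborhood of $\gamma$, this gives $\diam(\gamma_n) \to 0$ along a subsequence; after another extraction, $\gamma_n$ converges in Hausdorff distance to a single point $x \in \gamma$.

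The main step is then to exclude this remaining configuration: arbitrarily small loops of $C$ accumulating on a point $x \in \gamma$ while none of them touches $\gamma$. Here I exploit the chain structure of $C$. Fix an arbitrary $\gamma_0 \in C$ and choose $\delta_0 \in (0, \dist_{\mathcal{E}}(\gamma_0,\gamma))$, which is positive since $\gamma_0 \cap \gamma = \emptyset$ and both sets are compact. For each sufficiently large $n$, a chain $\gamma_n = \beta_0^{(n)}, \beta_1^{(n)}, \ldots, \beta_{k_n}^{(n)} = \gamma_0$ in $C$ contains a first loop $\eta^{(n)}$ of diameter $\geq \delta_0$. The preceding loops all have diameter $< \delta_0$ and do not touch $\gamma$, yet they chain-connect $\gamma_n$ (arbitrarily close to $x$) to $\eta^{(n)}$. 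Applying the finiteness of Step 1 at the scale $\delta_0$ to loops intersecting a neighborhood of $x$, a pigeonhole argument yields a single $\eta^* \in \BLS$ with $\eta^* \cap \gamma = \emptyset$ to which infinitely many $\gamma_n$ are chain-connected through small loops of $C$ that avoid $\gamma$.

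The hardest part of the argument is converting this configuration into a contradiction: one needs to rule out that an infinite subcluster consisting of arbitrarily small loops, connecting a fixed loop $\eta^*$ to a cusp-like accumulation point $x$ on $\gamma$, exists in $\BLS$ without any of its loops touching $\gamma$. I plan to handle this via a multi-scale refinement of the pigeonhole step above: iterating the extraction at diameter thresholds $\delta_0/2^k$ produces, at each scale, a further reduced configuration whose expected loop count in the relevant shrinking neighborhood of $x$ can be directly estimated from the loop soup intensity, eventually yielding a contradiction with the fact that the number of small loops intersecting a given compact set but not touching $\gamma$ is controlled by the Brownian sausage measure of $\gamma$. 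I anticipate this final multi-scale accounting to be the delicate step of the argument.
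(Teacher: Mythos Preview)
Your proposal has a genuine gap at the final step, which you yourself flag as ``the delicate step'' without carrying it out. The difficulty there is real: a first-moment estimate on the number of small loops in a shrinking neighborhood of $x$ does not, by itself, rule out an infinite chain of ever-smaller loops in $C$ accumulating at $x \in \gamma$ while staying disjoint from $\gamma$. Such a chain lives in the fractal complement of the Brownian loop $\gamma$ near $x$, and controlling its existence is a statement about crossing events for \emph{clusters}, not about expected loop counts. Your reference to a ``Brownian sausage measure of $\gamma$'' addresses the wrong object: that quantity governs whether an \emph{individual} loop passing near $\gamma$ is likely to hit $\gamma$, whereas you need to exclude a connected chain whose constituent loops may each sit comfortably away from $\gamma$ while the chain as a whole approaches it. Nothing in your outline explains how the multi-scale pigeonhole would produce such a cluster-level estimate.

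The paper's proof bypasses all of this with a much shorter argument. Fix a loop $\gamma_k$ (the one with $k$-th largest diameter). Conditionally on $\gamma_k$, the loops of $\BLS$ that do not intersect $\gamma_k$ form a Brownian loop soup in $D \setminus \gamma_k$ (an argument as in Lemma~9.2 of \cite{SheWer}), which decomposes into independent loop soups in the connected components of $D \setminus \gamma_k$. By conformal invariance and Lemma~9.4 of \cite{SheWer} --- which says that, almost surely, every cluster of a Brownian loop soup in a bounded simply connected domain is at positive distance from the boundary --- each cluster of $\BLS(D\setminus\gamma_k)$ stays at positive distance from $\gamma_k$. Hence any subcluster of $\BLS$ disjoint from $\gamma_k$ is at positive distance from it, and a union bound over the countably many $\gamma_k$ finishes. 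The point you are missing is that the hard statement (clusters do not touch the boundary) is already available in \cite{SheWer}, and the lemma reduces to it by a single conditioning step; your plan would in effect attempt to reprove that result from scratch.
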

\begin{proof}
Fix $k$ and let $\gamma_k$ be the loop in $\BLS$ with $k$-th largest diameter. Using an argument similar to that in Lemma~9.2 of \cite{SheWer},
one can prove that, conditionally on $\gamma_k$, the loops in $\BLS$ which do not intersect~$\gamma_k$ are distributed like
$\BLS(D \setminus \gamma_k)$, i.e.\ a Brownian loop soup in $D\setminus \gamma_k$. Moreover,  $\BLS(D \setminus \gamma_k)$
consists of a countable collection of disjoint loop soups, one for each connected component of  $D \setminus \gamma_k$. By conformal invariance, each
of these loop soups is distributed like a conformal image of a copy of $\BLS$. Hence, by Lemma~9.4 of~\cite{SheWer}, almost surely,
each cluster of $\BLS(D\setminus \gamma_k)$ is at positive distance from~$\gamma_k$. This implies that the unconditional probability that there exists a subcluster~$C$ such that $d_{\mathcal{E}}(\gamma_k,C) =0$ and $\gamma_k$ does not intersect $C$ is zero. Since~$k$ was arbitrary and
there are countably many loops in $\BLS$, the claim of the lemma follows.
\end{proof}

\begin{lemma} \label{rationalballs}
Almost surely, for all $x$ with rational coordinates and all rational $\delta >0$, no two clusters of the loop soup obtained by restricting $\BLS$ to $B(x;\delta)$ are at Euclidean distance zero from each other.
\end{lemma}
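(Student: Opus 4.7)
The plan is to recognize the restriction of $\BLS$ to $B(x;\delta)$ as itself a subcritical Brownian loop soup in a sub-domain, invoke the cluster-separation property from~\cite{SheWer}, and then take a countable union over the rational parameters.

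Concretely, for fixed $x$ with rational coordinates and $\delta\in\IQ\cap(0,\infty)$, I would let $\BLS_{x,\delta}$ denote the collection of loops of $\BLS$ contained in $B(x;\delta)$. The restriction property of Poisson point processes, applied to the intensity measure $\lambda\mu_D$, identifies $\BLS_{x,\delta}$ as a Poisson realization of $\lambda\mu_{D\cap B(x;\delta)}$, and therefore as a Brownian loop soup of intensity $\lambda\leq 1/2$ in the open set $D\cap B(x;\delta)$. I would then appeal to the subcritical cluster-separation property---a consequence of Lemma~9.4 of~\cite{SheWer}, the same ingredient already used in the proof of Lemma~\ref{nonzerodistance}---which guarantees that, almost surely, any two distinct clusters of such a subcritical loop soup lie at positive Euclidean distance from one another (and every cluster lies at positive distance from the boundary of its domain). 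A countable union over the pairs $(x,\delta)\in\IQ^2\times(\IQ\cap(0,\infty))$ then yields the lemma.

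The step that demands the most care, and that I expect to be the main obstacle, is the verification that the cited cluster-separation statement applies on $D\cap B(x;\delta)$, since this open set need not be simply connected or even connected. I would handle this by a standard decomposition: since each loop in $\BLS_{x,\delta}$ is a connected set, it lies entirely in a single connected component $U_i$ of $D\cap B(x;\delta)$, so by Poisson restriction the sub-soups on the different components are independent subcritical Brownian loop soups. Each $U_i$ is a bounded simply connected subdomain of the plane, to which the separation statement applies via conformal invariance. Clusters in distinct components are automatically separated: by the separation statement, any cluster in $U_i$ lies at positive distance $\eta_i$ from $\partial U_i$, and every point within distance $\eta_i$ of the cluster must remain in $U_i$, hence at positive Euclidean distance from the disjoint open set $U_j$. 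Combining the intra-component and inter-component bounds produces the desired separation of clusters for $\BLS_{x,\delta}$, and the countable union over rational $(x,\delta)$ concludes.
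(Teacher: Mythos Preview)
Your proposal is correct and follows essentially the same approach as the paper, which in a single sentence invokes the restriction property of the Brownian loop soup, Lemma~9.4 of~\cite{SheWer}, conformal invariance, and the countability of the rational parameters. You simply supply the details the paper leaves implicit, in particular the decomposition of $D\cap B(x;\delta)$ into (necessarily simply connected) components and the inter-component separation via the positive-distance-from-boundary part of the SheWer lemma.
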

\begin{proof}
This follows from Lemma~9.4 of~\cite{SheWer}, the restriction property of the Brownian loop soup, conformal invariance and the fact that we consider a countable number of balls.
\end{proof}

\begin{lemma}\label{existencerho} Almost surely, for every $\delta>0$ there exists $t_0>0$ such that every subcluster of $\mathcal{L}$
with diameter larger than $\delta$ contains a loop of time length larger than $t_0$.
\end{lemma}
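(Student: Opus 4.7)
The plan is to argue by contradiction. Assume that with positive probability there exist $\delta>0$ and subclusters $S_n$ of $\mathcal{L}$ with $\diam S_n>\delta$ whose loops all have time length at most $1/n$.

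First I reduce to a single cluster and invoke the finite approximation. Almost surely only finitely many clusters of $\mathcal{L}$ have diameter at least $\delta$: this follows from the Sheffield--Werner identification of outer boundaries of outermost clusters with $\mathrm{CLE}_\kappa$ (which has only finitely many loops of diameter at least $\delta$), applied recursively inside the hulls of nested clusters using conformal invariance and the restriction property of the Brownian loop soup. Since each $S_n$ lies in some cluster of diameter at least $\delta$, after passing to a subsequence we may assume $S_n\subset C$ for a fixed cluster $C$. Lemma \ref{convergingloops} then supplies an increasing sequence of finite subclusters $C_N\nearrow C$ whose ranges converge in Hausdorff distance to $\overline{C}$, together with loops $\ell_N\to\ell$ uniformly. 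For each fixed $N$ the loops of $C_N$ have a positive minimum time length, so for $n$ large the subcluster $S_n$ is disjoint, as a set of loops, from $C_N$. Choosing $x_n,y_n$ in the range of $S_n$ with $|x_n-y_n|>\delta$ and extracting a further subsequence gives $x_n\to x$ and $y_n\to y$ with $x,y\in\overline{C}$ and $|x-y|\geq\delta$.

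Next I apply Lemma \ref{nonzerodistance} to each loop of a fixed $C_N$: enumerate these as $\gamma_1,\ldots,\gamma_k$ and let $\eta:=\min_i\eta(\gamma_i)>0$, where $\eta(\gamma_i)$ is the positive lower bound on the Euclidean distance between $\gamma_i$ and any subcluster of $\mathcal{L}$ not intersecting it. Choose $N$ so large that the range of $C_N$ is $\eta/4$-dense in $\overline{C}$, and $n$ so large that all loops of $S_n$ have diameter less than $\eta/4$. Then every loop $\gamma'\in S_n$ lies within Euclidean distance $\eta/2$ of some $\gamma_i$, and the contrapositive of Lemma \ref{nonzerodistance} applied to the singleton subcluster $\{\gamma'\}$ and the loop $\gamma_i$ forces $\gamma'$ to intersect $\gamma_i$. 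Hence the range of $S_n$ is contained in the $\eta/4$-neighborhood of the range of $C_N$.

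The main obstacle is turning this geometric confinement into the desired contradiction. The plan is to combine it with the uniform convergence $\ell_N\to\ell$: since $x,y\in\overline{C}=\ell([0,1])$, pick $s,t\in[0,1]$ with $\ell(s)=x$ and $\ell(t)=y$, so that $\ell_N(s)\to x$ and $\ell_N(t)\to y$ lie on specific loops of $C_N$. The connected subcluster $S_n$ is pinned to the $\eta/4$-neighborhood of $\bigcup_i\gamma_i$, yet its range spans points arbitrarily close to $x$ and $y$. A careful chaining argument, using that $S_n\cup C_N$ is a strictly larger finite subcluster of $C$ whose range still lies in this neighborhood while $\diam S_n>\delta$ persists as $n,N\to\infty$ and all ``new'' loops shrink to diameter zero, contradicts the uniform continuity of $\ell$ on $[0,1]$ together with the Hausdorff convergence of $C_N$ to $\overline{C}$. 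Packaging this last step --- exploiting the interplay between the density of $C_N$ in $\overline{C}$, the finiteness of the collection $\{\gamma_i\}$, and the contrapositive of Lemma \ref{nonzerodistance} --- is the technical crux of the argument.
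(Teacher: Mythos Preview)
Your proposal has two genuine gaps.

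First, the quantity $\eta(\gamma_i)$ you introduce --- ``the positive lower bound on the Euclidean distance between $\gamma_i$ and any subcluster of $\mathcal{L}$ not intersecting it'' --- does not exist. Lemma~\ref{nonzerodistance} guarantees only that the distance is positive for \emph{each} such subcluster; it does not give a uniform bound. In fact the infimum is almost surely zero: the loop soup is dense in $D$, so any neighbourhood of a point on $\gamma_i$ contains arbitrarily small loops of $\mathcal{L}$ that do not touch $\gamma_i$, and each such loop is itself a (singleton) subcluster at arbitrarily small positive distance from $\gamma_i$. Thus the entire second paragraph, which rests on $\eta>0$ and on the ``contrapositive of Lemma~\ref{nonzerodistance}'' applied when the distance is merely less than $\eta$, collapses.

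Second, even granting the confinement conclusion, you never actually derive a contradiction. Having $\diam S_n>\delta$ with $S_n$ contained in a tube around $C_N$ is not contradictory: $C_N$ itself has large diameter. Your final paragraph acknowledges this (``packaging this last step \ldots\ is the technical crux''), but the vague appeal to uniform continuity of $\ell$ and Hausdorff convergence of $C_N$ does not isolate any property that is violated. There is no obvious way to close this gap along the lines you sketch.

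The paper's proof takes a quite different and shorter route: from the assumption it extracts an infinite sequence of finite chains of loops $C'_1,C'_2,\ldots$, each of diameter at least $\delta$ and pairwise disjoint as collections of loops (by iteratively decreasing the time-length threshold). Compactness then gives an accumulation point $z$, and a suitable rational annulus around $z$ is crossed by infinitely many of these disjoint chains. This has probability zero by Lemma~9.6 of \cite{SheWer}. No reduction to a single cluster, no use of Lemma~\ref{convergingloops} or Lemma~\ref{nonzerodistance}, is needed.
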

\begin{proof}
Let $\delta>0$ and suppose that for all $t_0>0$ there exists a subcluster of diameter larger than $\delta$ containing only loops of time length less than $t_0$.

Let $t_1 = 1$ and let $C_1$ be a subcluster of diameter larger than $\delta$ containing only loops of time length less than $t_1$.
By the definition of a subcluster there exists a finite chain of loops $C'_1$ which is a subcluster of $C_1$ and has diameter larger than~$\delta$.
Let $t_2 = \min \{ t_{\gamma} : \gamma \in C'_1 \}$, where $t_{\gamma}$ is the time length of $\gamma$. Let $C_2$ be a
subcluster of diameter larger than~$\delta$ containing only loops of time length less than $t_2$. By the definition of a subcluster there
exists a finite chain of loops $C'_2$ which is a subcluster of $C_2$ and has diameter larger than $\delta$. Note that by the construction
$\gamma_1 \neq \gamma_2$ for all $\gamma_1\in C'_1$, $\gamma_2\in C'_2$, i.e.\ the chains of loops $C'_1$ and $C'_2$ are disjoint as
collections of loops, i.e.\ $\gamma_1\neq \gamma_2$ for all $\gamma_1\in C_1',\gamma_2\in C_2'$. Iterating the construction gives infinitely
many chains of loops $C'_i$ which are disjoint as collections of loops and which have diameter larger than $\delta$.

For each chain of loops $C'_i$ take a point $z_i \in C'_i$, where $C'_i$ is viewed as a subset of the complex plane. Since the domain is bounded, the sequence $z_i$ has an accumulation point, say $z$.
Let $z'$ have rational coordinates and $\delta'$ be a rational number such that $|z-z'| < \delta/8$ and $|\delta-\delta'| < \delta/8$. The annulus centered at $z'$ with inner radius $\delta'/4$ and
outer radius $\delta'/2$ is crossed by infinitely many chains of loops which are disjoint as collections of loops. However, the latter event has probability~$0$ by Lemma~9.6 of \cite{SheWer} and its consequence, leading to a contradiction.
\end{proof}

\begin{proof}[Proof of Theorem \ref{finiteapproximation}]
We restrict our attention to the event of probability~$1$ such that the claims of Lemmas~\ref{convergingloops},~\ref{nonzerodistance},~\ref{rationalballs} and~\ref{existencerho} hold true, and such that there are only finitely many loops of diameter or time length larger than any positive threshold. Fix a realization of $\BLS$ and a cluster $C$ of $\BLS$. Take $C_N$, $\ell_N$ and $\ell$ defined for $C$ as in Lemma~\ref{convergingloops}. By Proposition~\ref{prop:topological} and Lemma~\ref{lem:deltaconv}, it is enough to prove that the sequence $\ell_N$ satisfies the condition that for all $\delta>0$ and $s,t\in[0,1]$ which are $\delta$-connected in $\ell$, there exists~$N_0$ such that $s,t$ are $4\delta$-connected in $\ell_N$ for all $N>N_0$.

To this end, take $\delta >0$ and $s,t$ such that $\ell(s)$ is connected to $\ell(t)$ in $\ell \cap B(x, \delta/2)$ for some $x$. Take $x'$ with rational coordinates and $\delta'$ rational such that $B(x;\delta/2) \subset B(x';\delta'/2)$ and $\overline{B(x';\delta')} \subset B(x;2\delta)$. If $C \subset B(x';\delta')$, then $\ell_N(s)$ is connected to $\ell_N(t)$ in $\ell_N\cap B(x;2\delta)$ for all $N$ and we are done. Hence, we can assume that
\begin{equation}\label{clusterintersectsboundary}
C \cap \partial B(x';\delta') \neq \emptyset.
\end{equation}

When intersected with $\overline{B(x';\delta')}$, each loop $\gamma \in C$ may split into multiple connected components.
We call each such component of $\gamma \cap \overline{B(x';\delta')}$ a \emph{piece} of $\gamma$. In particular if $\gamma \subset \overline{B(x';\delta')}$, then the only piece of $\gamma$ is the full loop~$\gamma$. The collection of all pieces we consider is given by $\{ \wp : \wp \text{ is a piece of } \gamma \text{ for some } \gamma\in C \}$. A \emph{chain of pieces} is a sequence of pieces such that each piece intersects the next piece in the sequence. Two pieces are in the same \emph{cluster of pieces} if they are connected via a finite chain of pieces. We identify a collection of pieces with the set in the plane given by the union of the pieces. Note that there are only finitely many pieces of diameter larger than any positive threshold, since the number of loops of diameter larger than any positive threshold is finite and each loop is uniformly continuous.

Let $C_1^*,C_2^*,\ldots$ be the clusters of pieces such that
\begin{equation}\label{propertyclusters}
C_i^* \cap B(x';\delta'/2) \neq \emptyset \text{ and } C_i^* \cap \partial B(x';\delta') \neq \emptyset.
\end{equation}
We will see later in the proof that the number of such clusters of pieces is finite, but we do not need this fact yet. We now prove that
\begin{equation}\label{clustersdisjoint}
\overline{C_i^*} \cap \overline{C_j^*} \cap B(x';\delta'/2) = \emptyset \text{ for all } i\neq j.
\end{equation}
To this end, suppose that \eqref{clustersdisjoint} is false and let $z \in \overline{C^*_i} \cap  \overline{C^*_j} \cap B(x';\delta'/2)$ for some $i\neq j$.

First assume that $z \in C^*_i$. Then, by the definition of clusters of pieces, $z \notin C^*_j$. It follows that $C^*_j$ contains a chain of infinitely many different pieces which has $z$ as an accumulation point. Since there are only finitely many pieces of diameter larger than any positive threshold, the diameters of the pieces in this chain approach $0$. Since $\dist_{\mathcal{E}}(z, \partial B(x';\delta')) > \delta'/2$, the pieces become full loops at some point in the chain. Let $\gamma\in C$ be such that $z\in\gamma$. It follows that there exists a subcluster of loops of $C$, which does not contain $\gamma$ and has $z$ as an accumulation point. This contradicts the claim of Lemma~\ref{nonzerodistance} and therefore it cannot be the case that $z\in C^*_i$.

Second assume that $z \notin C^*_i$ and $z \notin C^*_j$. By the same argument as in the previous paragraph, there exist two chains of loops of $C$ which are disjoint, contained in $B(x';\delta')$ and both of which have $z$ as an accumulation point.
These two chains belong to two different clusters of $\BLS$ restricted to $B(x';\delta')$. Since $x'$ and $\delta'$ are rational, this contradicts the claim of Lemma~\ref{rationalballs}, and hence it cannot be the case that $z \notin C^*_i$ and $z \notin C^*_j$. This completes the proof of \eqref{clustersdisjoint}.

We now define a particular collection of pieces $P$. By Lemma \ref{existencerho}, let $t_0>0$ be such that every subcluster of $\BLS$ of diameter larger than $\delta'/4$ contains a loop of time length larger than $t_0$. Let $P$ be the collection of pieces which have diameter larger than $\delta'/4$ or are full loops of time length larger than $t_0$. Note that $P$ is finite. Each chain of pieces which intersects both $B(x';\delta'/2)$ and $\partial B(x';\delta')$, contains a piece of diameter larger than $\delta'/4$ intersecting $\partial B(x';\delta')$ or contains a chain of full loops which intersects both $B(x';\delta'/2)$ and $\partial B(x';3\delta'/4)$. In the latter case it contains a subcluster of $\BLS$ of diameter larger than $\delta'/4$ and therefore a full loop of time length larger than $t_0$. Hence, each chain of pieces which intersects both $B(x';\delta'/2)$ and $\partial B(x';\delta')$ contains an element of $P$. Since $P$ is finite, it follows that the number of clusters of pieces $C_i^*$ satisfying \eqref{propertyclusters} is finite.

Since the range of $\ell$ is $\overline{C}$ and the number of clusters of pieces $C_i^*$ is finite,
\begin{align}
& \ell \cap B(x';\delta'/2) = \overline{C} \cap B(x';\delta'/2) \nonumber \\
&\textstyle = \overline{\bigcup_i C_i^*} \cap B(x';\delta'/2) = \bigcup_i \overline{C_i^*} \cap B(x';\delta'/2).\label{splitell}
\end{align}
By \eqref{clustersdisjoint}, \eqref{splitell} and the fact that $\ell(s)$ is connected to $\ell(t)$ in $\ell\cap B(x';\delta'/2)$,
\begin{equation}\label{ellincluster}
\ell(s),\ell(t)\in \overline{C_i^*} \cap B(x';\delta'/2),
\end{equation}
for some $i$. From now on see also Figure \ref{fig:pieces}.

\begin{figure}
	\begin{center}
		\includegraphics[scale=0.9]{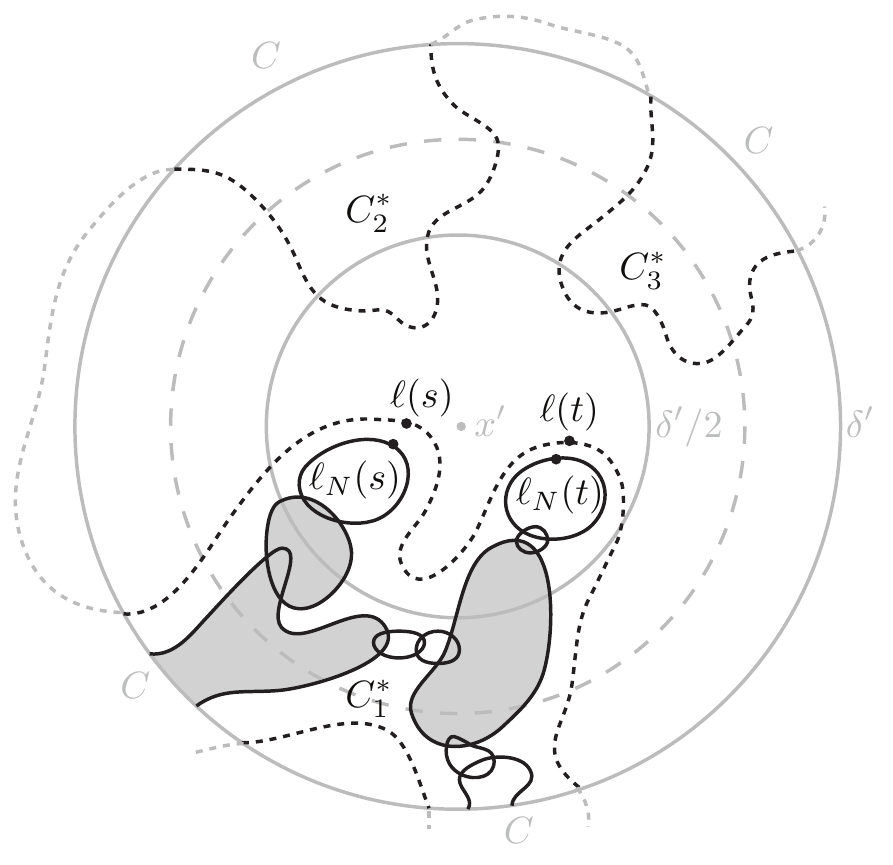}
	\end{center}
	\caption{Illustration of the last part of the proof of Theorem \ref{finiteapproximation} with $C_i^*=C_1^*$. The pieces drawn with solid lines form the set $C_i^*\cap \ell_N$. The shaded pieces represent the set $C_i^*\cap P$.}
	\label{fig:pieces}
\end{figure}

Let $\varepsilon$ be the Euclidean distance between $\{ \ell(s), \ell(t)\}$ and $\partial B(x';\delta'/2) \cup  \bigcup_{j\neq i}\overline{C^*_j}$. By \eqref{clustersdisjoint} and \eqref{ellincluster}, $\varepsilon>0$. Let $M$ be such that $\distinf(\ell_N,\ell)< \varepsilon$ and $\ell_N \cap \partial{B(x';\delta')} \neq \emptyset$ for $N>M$. The latter can be achieved by \eqref{clusterintersectsboundary}. Let $N>M$. By the definitions of $\varepsilon$ and $M$, we have that $\ell_N(s),\ell_N(t) \in  B(x';\delta'/2) $ and $\ell_N(s),\ell_N(t) \notin C^*_j$ for $j\neq i$. It follows that
\[
\ell_N(s),\ell_N(t) \in C_i^* \cap B(x';\delta'/2).
\]
Since $\ell_N$ is a finite subcluster of $C$, it also follows that there are finite chains of pieces $G^*_N(s), G^*_N(t)\subset C_i^*\cap \ell_N$ (not necessarily distinct) which connect $\ell_N(s),\ell_N(t)$, respectively, to $\partial B(x';\delta')$.

Since $G^*_N(s), G^*_N(t)$ intersect both $B(x';\delta'/2)$ and $\partial B(x';\delta')$, we have that $G_N^*(s),G_N^*(t)$ both contain an element of $P$. Moreover, $P$ is finite, any two elements of $C_i^*$ are connected via a finite chain of pieces and $\ell_N$ ($=C_N$) increases to the full cluster $C$. Hence, all elements of $C_i^*\cap P$ are connected to each other in $C_i^*\cap \ell_N$ for $N$ sufficiently large. It follows that $G^*_N(s)$ is connected to $G^*_N(t)$ in $C_i^*\cap\ell_N$ for $N$ sufficiently large. Hence, $\ell_N(s)$ is connected to $\ell_N(t)$ in $\ell_N \cap \overline{B(x';\delta')}$ for $N$ sufficiently large. This implies that $s,t$ are $4\delta$-connected in $\ell_N$ for $N$ sufficiently large.
\end{proof}

\section{No touchings}\label{sec:notouchings}

Recall the definitions of touching, Definitions \ref{deftouching}, \ref{defmutualtouching} and \ref{deftouchingcollection}. In this section we prove the following:

\begin{theorem}\label{thmbrownianmotion}
Let $B_t$ be a planar Brownian motion. Almost surely, $B_t$, $0 \leq t \leq 1$, has no touchings.
\end{theorem}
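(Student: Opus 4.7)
The plan is to suppose for contradiction that $B$ has a touching $(s,t)$ with parameter $\delta$, write $z=B(s)=B(t)$, and derive a contradiction with the infinite-winding property of planar Brownian motion at every point of its path.

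The key input is the following classical consequence of conformal invariance of $2$D Brownian motion: almost surely, for every $s\in(0,1)$ and every $\eta>0$, the set of angular directions $\{\arg(B(u)-B(s))\bmod 2\pi : u\in(s-\eta,s+\eta),\ B(u)\neq B(s)\}$ equals all of $[0,2\pi)$. The reason is that the continuously lifted argument $u\mapsto\arg(B(u)-B(s))$ is, after the time change $\tau(u)=\int^u|B(v)-B(s)|^{-2}dv$, a one-dimensional Brownian motion; the time change diverges as $u\to s$ by the law of the iterated logarithm for the radial part, so the lifted argument is a $1$D Brownian motion run for infinite time and hence attains every real value. The strong Markov property combined with a countable-covering argument (over a dense set of times) makes the property hold simultaneously for all $s$ outside a null set.

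Next I would pick perturbations $\gamma'_n$ with $d_\infty(B,\gamma'_n)\to 0$ witnessing the touching, whose pieces $\alpha'_n=\gamma'_n[s^-_n,s^+_n]$ and $\beta'_n=\gamma'_n[t^-_n,t^+_n]$ are disjoint compact connected subsets of $\overline{B(z;\delta)}$, each with endpoints on $\partial B(z;\delta)$. Standard planar topology (two disjoint continua in a closed disk, each connecting two boundary points, can be separated by a simple arc in the disk) yields a Jordan arc $J_n\subset\overline{B(z;\delta)}$ with endpoints on $\partial B(z;\delta)$ separating $\alpha'_n$ from $\beta'_n$. Extracting a subsequence, $J_n\to J$ in Hausdorff distance to a continuum in $\overline{B(z;\delta)}$ necessarily containing $z$, since $J_n$ must pass arbitrarily close to $z$ (the common limit point of interior points of $\alpha'_n$ and $\beta'_n$). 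In the limit, $\alpha=B[s^-,s^+]$ and $\beta=B[t^-,t^+]$ satisfy $\alpha\setminus\{z\}\subset\overline{U_\alpha}$ and $\beta\setminus\{z\}\subset\overline{U_\beta}$ for two distinct complementary components $U_\alpha,U_\beta$ of $\overline{B(z;\delta)}\setminus J$. Crucially, because each $J_n$ is a simple arc in a disk it cannot spiral about $z$ unboundedly, so the complementary components containing $\alpha'_n$ and $\beta'_n$ each subtend at $z$ only a union of finitely many open arcs on $\partial B(z;\rho)$; passing to the limit, the set of angular directions at $z$ into $\overline{U_\alpha}$ is a proper closed subset of $[0,2\pi)$, omitting at least any direction $\theta_0$ for which a short ray from $z$ enters $U_\beta$ (such $\theta_0$ exists because $U_\beta\supset\beta\setminus\{z\}$ accumulates at $z$). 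But the winding property applied at $s$ forces the angular approach of $\alpha\setminus\{z\}$ at $z$ to be all of $[0,2\pi)$, contradicting the previous sentence.

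The main obstacle is the topological step of the previous paragraph: controlling the limit separator $J$ where $\alpha'_n$ and $\beta'_n$ get pinched together, and extracting a genuinely non-degenerate wedge structure for $\overline{U_\alpha}$ and $\overline{U_\beta}$ at $z$. The decisive observation making this tractable is that each $J_n$ is a simple arc, which severely limits how much it can wind between the two pieces; a rescaling/blowup near $z$, combined with Hausdorff compactness of simple arcs in a disk, should turn this into a clean argument. A secondary technicality is extending the winding property from an almost-sure statement at a fixed time to one holding simultaneously for all $s\in[0,1]$, which is standard via the strong Markov property and a countable-covering argument.
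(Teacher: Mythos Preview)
Your proposal has a genuine gap in what you label a ``secondary technicality''. The assertion that almost surely, for \emph{every} $s\in(0,1)$ and every $\eta>0$, the set $\{\arg(B(u)-B(s))\bmod 2\pi: u\in(s-\eta,s+\eta),\,B(u)\neq B(s)\}$ is all of $[0,2\pi)$, is false. A concrete counterexample: let $s^*\in(0,1)$ be the almost surely unique time at which $\mathrm{Re}(B)$ attains its minimum over $[0,1]$. Then $\mathrm{Re}(B(u)-B(s^*))\ge 0$ for every $u\in[0,1]$, so all directions lie in $[-\pi/2,\pi/2]$. Planar Brownian motion has many such cone times; the phenomenon is not isolated. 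Your ``countable-covering argument'' cannot repair this: establishing the property at all rational $s$ says nothing about the behavior relative to the moving base point $B(s)$ at other times. Since a putative touching time is random with no a~priori regularity, you have not excluded that it is one of these exceptional times, and this is precisely the step where your argument is invoked.

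The topological step you correctly flag as the ``main obstacle'' is also not under control. For each fixed $n$ the simple arc $J_n$ has bounded winding about $z$, but there is no uniform bound in $n$: as both $\alpha'_n$ and $\beta'_n$ accumulate at $z$, the separating arc may have to wind more and more to thread between them, and the Hausdorff limit $J$ need not exhibit any wedge structure at $z$. Even granting such a structure, you have not argued why full angular coverage by $\alpha$ actually obstructs separation; two interleaved infinite spirals can sometimes be perturbed apart radially.

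The paper's proof bypasses the for-all-$s$ difficulty by a different decomposition. It fixes a scale $\delta$ and a center $z$, and slices $B$ into excursions between circles of radii $\delta/3$ and $2\delta/3$; each excursion has (up to scaling) the explicit law of a process $W$ built from the skew-product representation and a Bessel$(3)$ process. A $\delta$-touching near $z$ forces two distinct excursions to intersect yet be separable by small perturbations. Conditioning on one excursion (now a deterministic curve $\gamma$) and applying the strong Markov property to the other at its first hitting time of $\gamma$, one is reduced to a fixed-time statement: from that stopping time onward, Brownian motion almost surely winds fully around in some small annulus, so every sufficiently small perturbation still disconnects and cannot be separated from $\gamma$. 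The reduction to a stopping time is exactly what makes the winding input legitimate, and is the idea your sketch is missing.
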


\begin{corollary}\label{corbrownianloop} ~ 
\begin{itemize}
\item[(i)] Let $B^{\Loop}_t$ be a planar Brownian loop with time length 1. Almost surely, $B^{\Loop}_t$, $0 \leq t \leq 1$, has no touchings.
\item[(ii)] Let $\mathcal{L}$ be a Brownian loop soup with intensity $\lambda\in(0,\infty)$ in a bounded, simply connected domain $D$. Almost surely, $\mathcal{L}$ has no touchings.
\end{itemize}
\end{corollary}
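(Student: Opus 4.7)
Both parts reduce to Theorem \ref{thmbrownianmotion} via absolute continuity. For part (i), a planar Brownian loop $B^{\Loop}$ of time length $1$ starting at $0$ is a planar Brownian bridge, and its restriction to any compact subinterval $[a, b] \subset (0, 1)$ is absolutely continuous with respect to planar Brownian motion on $[a, b]$ (with Radon--Nikodym derivative given by a ratio of Gaussian transition densities). A touching $(s, t)$ with $s, t \in [a, b]$ and $\delta < \min(a, 1 - b)$ depends, by Definition \ref{deftouching}, only on the curve in a time neighborhood contained in $[a, b]$; hence Theorem \ref{thmbrownianmotion} combined with absolute continuity rules out such touchings a.s. Letting $(a, b)$ range over a countable exhaustion of $(0, 1)$ yields a.s.\ no touching with $s, t \in (0, 1)$. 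Since points are polar for planar Brownian motion, the same absolute continuity argument shows $B^{\Loop}(t) \neq 0$ for all $t \in (0, 1)$ a.s., leaving only the candidate $(s, t) = (0, 1)$. For this pair, conditional on the first exit and last entry points of $B(0, \delta)$, the Markov property makes the excursions $B^{\Loop}[0, s^+]$ and $B^{\Loop}[t^-, 1]$ into conditionally independent Brownian arcs in $B(0, \delta)$ with $0$ as a common endpoint, and the two-curve argument from part (ii) below then rules out a separating perturbation.

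For part (ii), Definition \ref{deftouchingcollection} requires us to show a.s.\ that (a) no single loop has a touching and (b) no pair of distinct loops has a mutual touching. For each $n \in \IN$, the loops in $\mathcal{L}$ of time length $\geq 1/n$ form a finite family a.s., and conditional on this configuration they are independent Brownian bridges (up to translation and Brownian scaling, operations which preserve the touching property). Part (i) applied to each of them, followed by a union bound over $n$, yields (a). For (b), we similarly restrict to pairs with both time lengths at least $1/n$ and, via local absolute continuity on compact subintervals of the time intervals, reduce to the statement that two independent planar Brownian motions $B^1, B^2$ on $[0, 1]$, started at arbitrary points $x_1, x_2$, have no mutual touching a.s.

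To handle this last reduction I would concatenate: let $\tilde B$ be a planar Brownian motion on $[0, 2]$ from $x_1$. By the Markov property, conditional on $\tilde B(1) = y$, the pieces $\tilde B|_{[0, 1]}$ and $\tilde B|_{[1, 2]}$ are conditionally independent, distributed as a Brownian bridge from $x_1$ to $y$ and as a Brownian motion from $y$ respectively. A mutual touching of these two pieces at $(s, t)$ with $s, t$ bounded away from $\{0, 1\}$ can be converted into a single-curve touching of $\tilde B$ at $(s, 1 + t)$: given joint perturbations $(\gamma_1', \gamma_2')$ witnessing the mutual touching, one glues them into a curve $\tilde B'$ close to $\tilde B$ by interpolating on a short window $[1 - \eta, 1 + \eta]$ using a path close to $\tilde B$ there. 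Theorem \ref{thmbrownianmotion} applied to $\tilde B$ then forces the probability of the mutual touching to be zero; integrating over $y$ (whose law has full support) together with translation invariance handles arbitrary $x_1, x_2$. The main technical obstacle is the gluing construction: one must verify that the excursion intervals $[s^-, s^+]$ and $[(1 + t)^-, (1 + t)^+]$ of $\tilde B'$ remain disjoint from the interpolation window $[1 - \eta, 1 + \eta]$, so that the excursions inside these intervals come from the original mutual-touching perturbations; this is ensured by choosing $\delta$ small and restricting to $s, t$ bounded away from the endpoints, which by the polarity argument from part (i) covers all cases a.s.
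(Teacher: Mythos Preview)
Your part (i) is essentially the paper's argument. The paper uses absolute continuity on $[0,u_0]$ (starting at $0$) together with time reversal of the loop, rather than on two-sided windows $[a,b]$ plus polarity; this is a cosmetic difference. The one substantive divergence is the endpoint pair $(0,1)$: the paper disposes of it directly with Lemma~\ref{disconnection}, which shows that any small perturbation of the initial segment of the loop still disconnects a neighbourhood of $0$, so the final segment cannot avoid it. Your treatment instead forward-references the mutual-touching argument of part (ii), which is both more circuitous and not fully justified (the two ``Brownian arcs'' you describe are not independent Brownian motions, so the reduction you invoke does not apply as stated).

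For part (ii), mutual touchings, your approach is genuinely different from the paper's and has a real gap. The paper does not concatenate. Instead it reveals the loops one by one in decreasing order of diameter; conditional on $\gamma_1,\dots,\gamma_{k-1}$, the next loop $\gamma_k$ is a Brownian loop and each earlier $\gamma_i$ is deterministic. One then runs the excursion decomposition of the proof of Theorem~\ref{thmbrownianmotion} on $\gamma_k$ and applies Lemma~\ref{Wnotouching} with the deterministic $\gamma_i$ playing the role of $\gamma$. This reuses the machinery already built and avoids any gluing.

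Your concatenation route has two unresolved issues. First, the gluing that converts a mutual touching of $\tilde B|_{[0,1]}$ and $\tilde B|_{[1,2]}$ into a self-touching of $\tilde B$ requires the interpolation window $[1-\eta,1+\eta]$ to stay disjoint from the excursion intervals $[s^-,s^+]$ and $[(1+t)^-,(1+t)^+]$ \emph{for the perturbed curve}; you assert this can be arranged but do not verify it, and the choice of $\eta$ must be made before the perturbation is produced. Second, and more seriously, conditioning on $\tilde B(1)=y$ gives a bridge to $y$ paired with a Brownian motion from $y$; after disintegration you obtain the no-mutual-touching statement only for almost every $y$. ``Translation invariance'' does not upgrade this to every starting pair $(x_1,x_2)$: translating both curves simultaneously preserves the event but also moves $x_1$, so you only learn that the probability depends on $x_2-x_1$, not that it vanishes for every value of that difference. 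A further absolute-continuity step (comparing the second motion on $[a',1]$ for different starting points) would be needed, and you have not supplied it.
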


We start by giving a sketch of the proof of Theorem \ref{thmbrownianmotion}. Note that ruling out isolated touchings can be done using the fact that the intersection exponent $\zeta(2,2)$ is larger than 2 (see \cite{LawSchWer}). However, also more complicated situations like accumulations of touchings can occur. Therefore, we proceed as follows. We define excursions of the planar Brownian motion $B$ from the boundary of a disk which stay in the disk. Each of these excursions has, up to a rescaling in space and time, the same law as a process $W$ which we define below. We show that the process $W$ possesses a particular property, see Lemma \ref{Wnotouching} below. If $B$ had a touching, it would follow that the excursions of $B$ would have a behavior that is incompatible with this particular property of the process $W$.

As a corollary to Theorem \ref{thmbrownianmotion}, Corollary \ref{corbrownianloop} and Theorem \ref{thm:curveconv}, we obtain the following result. It is a natural result, but we could not find a version of this result in the literature and therefore we include it here.

\begin{corollary}\label{maincorBM}
Let $S_t$, $t\in\{0,1,2,\ldots\}$, be a simple random walk on the square lattice $\IZ^2$, with $S_0 = 0$, and define $S_t$ for non-integer times $t$ by linear interpolation.  
\begin{itemize}
\item[(i)] Let $B_t$ be a planar Brownian motion started at 0. As $N\to\infty$, the outer boundary of $(N^{-1} S_{2 N^2 t}, 0\leq t\leq 1)$ converges in distribution to the outer boundary of $(B_t, 0\leq t\leq 1)$, with respect to $d_H$.
\item[(ii)] Let $B_t^{\Loop}$ be a planar Brownian loop of time length 1 started at 0. As $N\to\infty$, the outer boundary of $(N^{-1} S_{2 N^2 t}, 0\leq t\leq 1)$, conditional on $\{S_{2 N^2} = 0\}$, converges in distribution to the outer boundary of $(B_t^{\Loop}, 0\leq t\leq 1)$, with respect to $d_H$.
\end{itemize}
\end{corollary}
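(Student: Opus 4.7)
The plan is to deduce both parts by combining an invariance principle in the uniform topology with the deterministic convergence statement of Theorem~\ref{thm:curveconv}, using Theorem~\ref{thmbrownianmotion} and Corollary~\ref{corbrownianloop}(i) to verify the no-touchings hypothesis almost surely. Concretely, I want to view the map $\gamma \mapsto \obound \gamma$ as a map on the space of continuous curves $[0,1]\to\IC$ equipped with $\distinf$, and show that this map is continuous at every $\gamma$ without touchings. Since both the rescaled random walk and the limit have time length $1$, $\distinf$ reduces to the sup-norm plus a zero time-length term, so we are really working in $C([0,1],\IC)$ with the uniform topology.

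For part (i), the classical Donsker invariance principle gives that $(N^{-1}S_{2N^2 t})_{0\le t\le 1}\Rightarrow (B_t)_{0\le t\le 1}$ in $C([0,1],\IC)$; the factor $2N^2$ is the correct diffusive scaling because each coordinate of $S$ has step variance $1/2$. By Skorokhod's representation theorem, realize the sequence on a common probability space so that the convergence is uniform almost surely. On the almost sure event supplied by Theorem~\ref{thmbrownianmotion}, $B$ has no touchings, so Theorem~\ref{thm:curveconv} applies pathwise and yields $d_H(\obound(N^{-1}S_{2N^2\cdot}),\obound B)\to 0$ almost surely. Convergence in distribution in the Hausdorff metric then follows, proving (i).

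For part (ii), the same scheme works once one has the bridge invariance principle: under the conditioning $\{S_{2N^2}=0\}$ (a positive probability event for each $N$ by the local central limit theorem), the processes $(N^{-1}S_{2N^2 t})_{0\le t\le 1}$ converge in distribution in $C([0,1],\IC)$ to the planar Brownian bridge of time length $1$ started at $0$, which is precisely $B^{\Loop}$. This can be proved by the standard route of combining Donsker's theorem on $[0,1-\epsilon]$ with a local CLT estimate to control the endpoint, and taking $\epsilon\to 0$; alternatively, one can invoke an existing bridge invariance principle. Transferring to almost sure convergence via Skorokhod and applying Corollary~\ref{corbrownianloop}(i) together with Theorem~\ref{thm:curveconv} yields (ii) in the same manner as (i).

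The only non-routine ingredient is the bridge invariance principle in part~(ii); everything else is an application of the machinery already set up in the paper. Once the uniform convergence of the (conditioned) rescaled walks to the appropriate Brownian object is in place, the Hausdorff convergence of outer boundaries is automatic from the continuity statement of Theorem~\ref{thm:curveconv} at no-touching curves.
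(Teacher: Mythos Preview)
Your proposal is correct and follows exactly the route the paper indicates: the paper does not write out a proof but states the result as a direct corollary to Theorem~\ref{thmbrownianmotion}, Corollary~\ref{corbrownianloop} and Theorem~\ref{thm:curveconv}, and your argument (invariance principle, Skorokhod representation, then the no-touchings result combined with Theorem~\ref{thm:curveconv}) is precisely the intended way to assemble these ingredients.
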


To define the process $W$ mentioned above, we recall some facts about the three-dimensional Bessel process and its relation with Brownian motion, see e.g.\ Lemma 1 of~\cite{BurWer} and the references therein. The three-dimensional Bessel process can be defined as the modulus of a three-dimensional Brownian motion.

\begin{lemma}\label{bessel}
Let $X_t$ be a one-dimensional Brownian motion starting at 0 and $Y_t$ a three-dimensional Bessel process starting at 0. Let $0 < a < a'$ and define $\tau = T_a(X) = \inf \{ t\geq 0: X_t = a\}$, $\tau' = T_{a'}(X)$, $\sigma = \sup \{ t < \tau : X_t = 0 \}$, $\rho = T_a(Y)$ and $\rho' = T_{a'}(Y)$. Then,
\begin{itemize}
\item[(i)] the two processes $(X_{\sigma+u}, 0 \leq u \leq \tau - \sigma)$ and $(Y_u, 0 \leq u \leq \rho)$ have the same law,
\item[(ii)] the process $(Y_{\rho+u}, 0 \leq u \leq \rho' - \rho)$ has the same law as the process $(X_{\tau+u}, 0\leq u \leq \tau' - \tau)$ conditional on $\{\forall u \in [0,\tau'-\tau], X_{\tau+u} \neq 0\}$.
\end{itemize}
\end{lemma}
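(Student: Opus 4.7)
Both parts are classical; the unifying tool is the Doob $h$-transform identification between one-dimensional Brownian motion killed at $0$ and the three-dimensional Bessel process, based on the harmonic function $h(x)=x$. Concretely, for every $x>0$, the law of the $h$-transform of a Brownian motion started at $x$ and killed at $0$ coincides with the law of a Bessel(3) process started at $x$ (see, e.g., Revuz--Yor, Chapter VI).

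For (ii), I would first apply the strong Markov property of $X$ at $\tau$ and of $Y$ at $\rho$, which reduces the claim to the following statement: a Brownian motion started at $a$ and run until its first hit of $a'$, conditioned on the event of hitting $a'$ before $0$, has the same law as a Bessel(3) process started at $a$ and run until its first hit of $a'$. The conditioning event has probability $a/a'$ by gambler's ruin, and the identification of the resulting conditional law with the Bessel(3) law follows directly from the $h$-transform description above, applied up to the stopping time $T_{a'}$.

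For (i), I would use It\^o's excursion theory: the interval $[\sigma,\tau]$ is precisely the initial segment, up to the first hit of $a$, of the first excursion of $X$ away from $0$ that reaches level $a$. A standard computation under It\^o's excursion measure identifies the law of such a conditioned and stopped excursion with that of a Bessel(3) started at $0$ stopped at $T_a$ (Revuz--Yor, Chapter XII). A more self-contained derivation from (ii) proceeds by approximation: for small $\varepsilon>0$ set $\sigma_\varepsilon=\sup\{t<\tau:X_t=\varepsilon\}$; the strong Markov property at $\sigma_\varepsilon$ together with gambler's ruin shows that $(X_{\sigma_\varepsilon+u},\,0\leq u\leq\tau-\sigma_\varepsilon)$ is a Brownian motion from $\varepsilon$ conditioned to hit $a$ before $0$, stopped at $T_a$, which by (ii) equals in law a Bessel(3) from $\varepsilon$ stopped at $T_a$; one then lets $\varepsilon\downarrow 0$ and invokes weak continuity of the Bessel(3) law in the starting point. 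The main obstacle in either route is handling the degenerate starting point $0$ of the Bessel(3) process, which forces one either to invoke the full strength of It\^o's excursion theory or to carry out the $\varepsilon\downarrow 0$ limit carefully.
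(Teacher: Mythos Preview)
The paper does not actually prove this lemma: it is stated as a known fact, introduced by ``we recall some facts about the three-dimensional Bessel process and its relation with Brownian motion, see e.g.\ Lemma~1 of~[BurWer] and the references therein.'' Your proposal therefore supplies strictly more than the paper does, and the outline you give (Doob $h$-transform with $h(x)=x$ for part~(ii), It\^o excursion theory or an $\varepsilon\downarrow 0$ approximation for part~(i)) is the standard and correct route.

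One small imprecision in your approximation argument for (i): the time $\sigma_\varepsilon=\sup\{t<\tau:X_t=\varepsilon\}$ is a last exit time, not a stopping time, so one cannot literally invoke ``the strong Markov property at $\sigma_\varepsilon$.'' The clean version uses instead the \emph{first} hit of $\varepsilon$ within the excursion straddling $\tau$ (equivalently, apply the strong Markov property at each successive $T_\varepsilon$ and condition on the event that the ensuing path hits $a$ before $0$); then the conditioned law is exactly that of a Bessel(3) from $\varepsilon$ stopped at $T_a$, and the limit $\varepsilon\downarrow 0$ goes through. Alternatively, your first route via It\^o's excursion measure avoids this issue entirely.
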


Next we recall the skew-product representation of planar Brownian motion, see e.g.\ Theorem 7.26 of \cite{MorPer}: For a planar Brownian motion $B_t$ starting at 1, there exist two independent one-dimensional Brownian motions $X_t^1$ and $X_t^2$ starting at 0 such that
\[
B_t = \exp(X_{H(t)}^1 + i X_{H(t)}^2),
\]
where
\[
H(t) = \inf \left\{ h \geq 0 : \int_0^h \exp(2 X_u^1) du > t \right\} = \int_0^t \frac{1}{|B_u|^2} du.
\]

We define the process $W_t$ as follows. Let $X_t$ be a one-dimensional Brownian motion starting according to some distribution on $[0,2\pi)$. Let $Y_t$ be a three-dimensional Bessel process starting at 0, independent of $X_t$. Define
\[
V_t = \exp( -Y_{H(t)} + i X_{H(t)}),
\]
where
\[
H(t) = \inf \left\{ h \geq 0 : \int_0^h \exp(-2 Y_u) du > t \right\}.
\]
Let $B_t$ be a planar Brownian motion starting at 0, independent of $X_t$ and $Y_t$, and define
\[
W_t = \left\{
\begin{array}{ll}
V_t, & 0\leq t\leq \tau_{\frac{1}{2}}, \\
V_{\tau_{\frac{1}{2}}} + B_{t-\tau_{\frac{1}{2}}},\quad & \tau_{\frac{1}{2}} < t \leq \tau,
\end{array}
\right.
\]
with
\begin{align*}
\tau_{\frac{1}{2}} &= \textstyle \inf\{ t>0 : |V_t| = \frac{1}{2} \}, \\
\tau &= \inf\{ t>\tau_{\frac{1}{2}} : |V_{\tau_{\frac{1}{2}}} + B_{t-\tau_{\frac{1}{2}}}| = 1 \}.
\end{align*}
Note that $W_t$ starts on the unit circle, stays in the unit disk and is stopped when it hits the unit circle again.

\begin{figure}
	\begin{center}
		\includegraphics[scale=0.9]{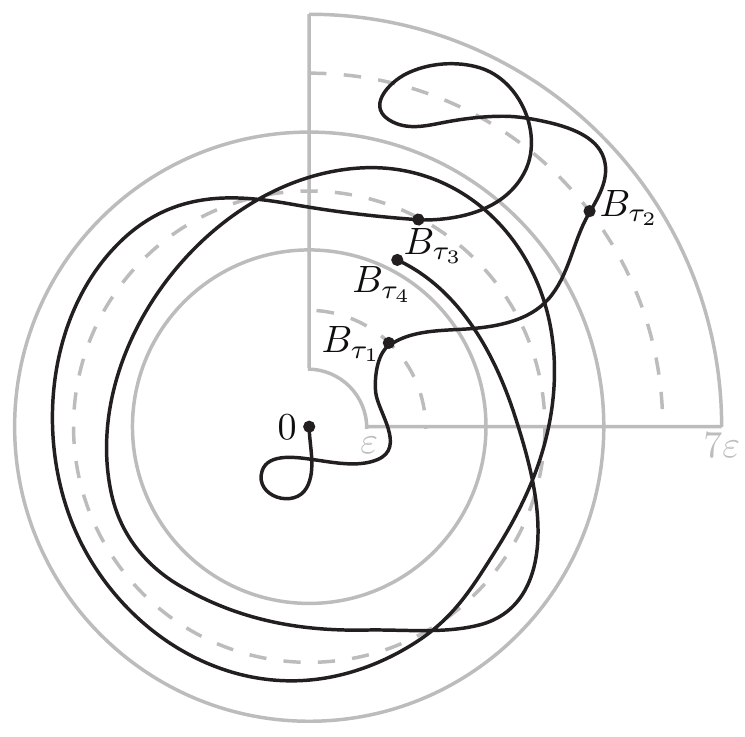}
	\end{center}
	\caption{The event $E_{\varepsilon}$.}
	\label{fig:winding}
\end{figure}

Next we derive the property of $W$ which we will use in the proof of Theorem \ref{thmbrownianmotion}. For this, we need the following property of planar Brownian motion:

\begin{lemma}\label{disconnection}
Let $B$ be a planar Brownian motion started at 0 and stopped when it hits the unit circle. Almost surely, there exists $\varepsilon>0$ such that for all curves $\gamma$ with $d_{\infty}(\gamma,B)<\varepsilon$ we have that $\gamma$ disconnects $\partial B(0;\varepsilon)$ from $\partial B(0;1)$.
\end{lemma}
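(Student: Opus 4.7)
The plan is to show that, almost surely, the trace of $B$ contains a closed loop surrounding the origin at strictly positive distance from both $\{0\}$ and $\partial B(0;1)$, and that this disconnection is stable under small $\distinf$-perturbations.

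First, using the skew-product representation of planar Brownian motion, I would show that almost surely the continuous lift $\theta_t$ of $\arg(B_t)$ satisfies $\limsup_{t\to 0^+}\theta_t=+\infty$ and $\liminf_{t\to 0^+}\theta_t=-\infty$. This is a consequence of the fact that the Bessel clock $H(t)=\int_0^t|B_u|^{-2}\,du$ diverges as $t\to 0^+$, so the winding angle -- a one-dimensional Brownian motion time-changed by $H$ -- oscillates over an infinite time horizon as $B$ leaves the origin. Combined with the continuity of $|B_t|$ and $\theta_t$ on $(0,T_1]$, where $T_1$ is the hitting time of the unit circle, this yields random times $0<s<t<T_1$ with $B_s=B_t$, with the winding of $B|_{[s,t]}$ around the origin equal to $2\pi$, and with $|B_u|$ contained in some interval $[r_-,r_+]\subset(0,1/2)$ for all $u\in[s,t]$. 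The closed curve $L:=B[s,t]$ then surrounds $0$ inside the annulus $\{z:r_-<|z|<r_+\}$ and lies at a strictly positive random Euclidean distance $\eta$ from both $\{0\}$ and $\partial B(0;1)$.

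Second, I would choose $\varepsilon\in(0,\eta/10)$ and argue that every $\gamma$ with $\distinf(\gamma,B)<\varepsilon$ disconnects $\partial B(0;\varepsilon)$ from $\partial B(0;1)$. After time reparametrization the piece $\gamma|_{[s,t]}$ is within $\varepsilon$ of $L$ in the sup norm, so it lies inside $\{z:\varepsilon<|z|<1-\varepsilon\}$, and its endpoints $\gamma(s),\gamma(t)$ satisfy $|\gamma(s)-\gamma(t)|<2\varepsilon$ since $B_s=B_t$. Closing $\gamma|_{[s,t]}$ by the short segment $[\gamma(t),\gamma(s)]$ yields a closed loop $\tilde L$ with winding $\pm 1$ around every $z\in B(0;\varepsilon)$, since the closing segment has length $<2\varepsilon$ at distance $\geq r_--\varepsilon$ from $z$ and thus contributes a negligible winding, while $\gamma|_{[s,t]}$ inherits its winding from $L$ by continuity. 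Hence $\tilde L$ disconnects $\partial B(0;\varepsilon)$ from $\partial B(0;1)$.

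The main obstacle is to transfer the disconnection from $\tilde L$ to $\gamma$ itself, since the closing segment is not a priori contained in $\gamma$. I would resolve this by exploiting the continuity of $B$ at $s$ and $t$: for some small $\delta>0$, both $B|_{[s-\delta,s]}$ and $B|_{[t,t+\delta]}$ lie in a small neighborhood of $B_s=B_t$, and by the sup-norm bound the same is true for the corresponding pieces of $\gamma$. A local topological analysis using the Jordan curve theorem applied to $\tilde L$, together with the configuration of strands of $\gamma$ entering and leaving this neighborhood near $B_s$, shows that any continuous path from $\partial B(0;\varepsilon)$ to $\partial B(0;1)$ in $\IC\setminus\gamma$ attempting to exploit the gap between $\gamma(s)$ and $\gamma(t)$ must cross one of these strands, yielding a contradiction.
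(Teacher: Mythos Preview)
Your approach has a genuine gap in the stability step. A single-winding loop $L=B[s,t]$ is not robust under $\varepsilon$-perturbation: if $\gamma$ opens $L$ into a spiral (which is allowed since only $|\gamma(u)-B(u)|<\varepsilon$ is required, not $\gamma(s)=\gamma(t)$), that spiral does not separate the origin from infinity --- its complement in the ambient annulus is connected. Your proposed fix via the ``local topological analysis'' of the strands $\gamma[s-\delta,s]$ and $\gamma[t,t+\delta]$ does not work in general. Nothing prevents both strands from lying on the same side of the closing segment $[\gamma(t),\gamma(s)]$ (for instance, both going radially outward from $B_s$), in which case a path can cross that segment between $\gamma(s)$ and $\gamma(t)$ without meeting either strand. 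Since you have no a priori control on the directions in which $B$ (and hence $\gamma$) enters and leaves the double point $B_s=B_t$, the local Jordan-curve argument cannot be completed as stated.

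The paper avoids this difficulty by building extra angular excess into a concrete event $E_\varepsilon$: the Brownian path first crosses radially from $|z|=2\varepsilon$ to $|z|=6\varepsilon$ and back, staying in a narrow wedge, and then winds \emph{twice} ($4\pi$, not $2\pi$) inside the thin annulus $\{3\varepsilon<|z|<5\varepsilon\}$. After an $\varepsilon$-perturbation the winding piece still sweeps more than a full turn and the radial crossing blocks the only possible spiral escape through the residual gap, so disconnection of $\partial B(0;2\varepsilon)$ from $\partial B(0;6\varepsilon)$ persists by construction. The almost-sure conclusion then follows from Borel--Cantelli: by scale invariance $\IP(E_\varepsilon)$ is a positive constant, and the events $E_{1/7^n}$ are independent. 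This is also cleaner than your skew-product/winding-oscillation argument for mere existence of a surrounding loop. If you want to salvage your route, the natural fix is to require $B[s,t]$ to wind at least twice around the origin; then the open arc $\gamma[s,t]$ already has angular change exceeding $2\pi$ after perturbation, and no closing-segment trick is needed.
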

\begin{proof}
We construct the event $E_{\varepsilon}$, for $0< \varepsilon \leq 1/7$, illustrated in Figure \ref{fig:winding}. Loosely speaking, $E_{\varepsilon}$ is the event that $B$ disconnects 0 from the unit circle in a strong sense, by crossing an annulus centered at 0 and winding around twice in this annulus. Let
\begin{align*}
\tau_1 &= \inf\{t\geq 0:|B_t| = 2\varepsilon\}, \quad \tau_2 = \inf\{t\geq 0:|B_t|=6\varepsilon\}, \\
\tau_3 &= \inf \{ t > \tau_2 : |B_t| = 4\varepsilon \}, \quad \tau_4 = \inf \{ t > \tau_3 : | \text{arg} (B_t / B_{\tau_3}) | = 4\pi \},
\end{align*}
where arg is the continuous determination of the angle. Let
\begin{align*}
A_1 &= \{ z\in\IC : \varepsilon < |z| < 7\varepsilon, | \text{arg} (z / B_{\tau_1}) | < \pi / 4 \}, \\
A_2 &= \{ z\in\IC : 3\varepsilon < |z| < 5\varepsilon \}.
\end{align*}
Define the event $E_{\varepsilon}$ by
\[
E_{\varepsilon} = \{ \tau_4 < \infty, B[\tau_1,\tau_3] \subset A_1, B[\tau_3,\tau_4] \subset A_2 \}.
\]

By construction, if $E_{\varepsilon}$ occurs then for all curves $\gamma$ with $d_{\infty}(\gamma,B)<\varepsilon$ we have that $\gamma$ disconnects $\partial B(0;2\varepsilon)$ from $\partial B(0;6\varepsilon)$. It remains to prove that almost surely $E_{\varepsilon}$ occurs for some $\varepsilon$. By scale invariance of Brownian motion, $\IP(E_{\varepsilon})$ does not depend on $\varepsilon$, and it is obvious that $\IP(E_{\varepsilon}) > 0$. Furthermore, the events $E_{1/7^n}$, $n \in \IN$, are independent. Hence almost surely $E_{\varepsilon}$ occurs for some $\varepsilon$.
\end{proof}

\begin{lemma}\label{Wnotouching}
Let $\gamma:[0,1]\to\IC$ be a curve with $|\gamma(0)|=|\gamma(1)|=1$ and $|\gamma(t)| < 1$ for all $t\in(0,1)$. Let $W$ denote the process defined above Lemma~\ref{disconnection} and assume that $W_0 \not\in \{\gamma(0),\gamma(1)\}$ a.s. Then the intersection of the following two events has probability 0:
\begin{itemize}
\item[(i)] $\gamma\cap W \neq \emptyset$,
\item[(ii)] for all $\varepsilon>0$ there exist curves $\gamma',\gamma''$ such that $d_{\infty}(\gamma,\gamma')<\varepsilon$, $d_{\infty}(W,\gamma'')<\varepsilon$ and $\gamma'\cap \gamma'' = \emptyset$.
\end{itemize}
\end{lemma}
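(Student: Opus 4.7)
The plan is to argue by contradiction: I assume that events (i) and (ii) occur simultaneously with positive probability and derive a contradiction from Lemma \ref{disconnection}. The heuristic is that $W$, locally at every interior point of its range, is absolutely continuous with respect to a planar Brownian motion and therefore inherits the strong disconnection property of Lemma \ref{disconnection}, which rules out the tangential contact with $\gamma$ that a simultaneous perturbation would require.

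On the event (i), set $t^{*} = \inf\{t \ge 0 : W_t \in \gamma\}$; this is a stopping time, and $z := W_{t^*}$ lies in $\gamma$. The hypothesis $W_0 \notin \{\gamma(0),\gamma(1)\}$, together with $|\gamma(t)|<1$ for $t \in (0,1)$, $|W_0|=|W_{\tau}|=1$, and $|W_t|<1$ for $0<t<\tau$, forces $0<t^{*}<\tau$ almost surely and $z$ to lie in the open unit disk (the boundary cases being null). Writing $z=\gamma(s^{*})$ with $s^{*}\in(0,1)$, the curve $\gamma$ reaches the unit circle in both parameter directions from $s^{*}$, so it crosses every sufficiently small annulus centred at $z$.

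The heart of the argument is to apply Lemma \ref{disconnection} to $W$ locally at $z$. The time-shifted and translated process $(W_{t^{*}+u}-z)_{0 \le u \le \tau - t^{*}}$, restricted to its initial excursion inside a small ball $B(0;\delta)$ with $\delta < \text{dist}(z,\partial D)$, is absolutely continuous with respect to a planar Brownian motion started at the origin and stopped on exiting $B(0;\delta)$. For $t^{*} \ge \tau_{\frac{1}{2}}$ this is immediate, because $W$ on $[\tau_{\frac{1}{2}},\tau]$ is, by construction, genuine planar Brownian motion. For $t^{*} < \tau_{\frac{1}{2}}$ one invokes the skew-product representation of $V$ together with Lemma \ref{bessel}(ii): on any bounded interval the three-dimensional Bessel process has the law of a one-dimensional Brownian motion conditioned on the positive-probability event of not hitting the origin, so any almost-sure event for planar Brownian motion is also almost sure for $V$ on a compact time window bounded away from the unit circle. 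Consequently, after translating and rescaling in order to apply Lemma \ref{disconnection} to this local Brownian motion, almost surely on (i) there exist $\varepsilon^{*}>0$ and $s_0>0$ such that every curve $\tilde\gamma$ with $d_{\infty}(\tilde\gamma, W|_{[t^{*},t^{*}+s_0]})<\varepsilon^{*}$ disconnects $\partial B(z;\varepsilon^{*})$ from $\partial B(z;R)$ for some suitable $R\in(\varepsilon^{*},\delta)$.

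To conclude, I would choose $\varepsilon\in(0,\varepsilon^{*})$ small enough that every curve $\gamma'$ with $d_{\infty}(\gamma',\gamma)<\varepsilon$ both passes within $\varepsilon^{*}$ of $z$ near parameter $s^{*}$ and exits $B(z;R)$ on both sides of $s^{*}$; this is possible because $\gamma$ reaches the unit circle in both parameter directions from $z$. Any $\gamma''$ with $d_{\infty}(\gamma'',W)<\varepsilon\le\varepsilon^{*}$ then contains a subcurve within $\varepsilon^{*}$ of $W|_{[t^{*},t^{*}+s_0]}$, which by the previous paragraph disconnects $\partial B(z;\varepsilon^{*})$ from $\partial B(z;R)$. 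Hence $\gamma'\cap\gamma''\ne\emptyset$, contradicting (ii). The step I expect to be most delicate is the local absolute continuity in the third paragraph — in particular, the reduction of the Bessel-driven process $V$ to planar Brownian motion on a short time window via the skew-product and Lemma \ref{bessel}(ii) — together with the dismissal of the exceptional null events $t^{*}\in\{0,\tau_{\frac{1}{2}},\tau\}$, which rely on the hypothesis on $W_0$ and a standard continuity argument at the exit time.
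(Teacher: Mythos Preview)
Your approach shares the paper's core idea---apply Lemma~\ref{disconnection} at the first time $W$ meets $\gamma$ to rule out the tangential contact required by (ii)---but the technical implementation differs, and the difference is exactly the step you flagged as delicate.

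The paper does not attempt to establish local absolute continuity of the Bessel-driven process $V$ with respect to planar Brownian motion. Instead it first lets $W$ run until $T_a(W)=\inf\{t:|W_t|=a\}$ for a fixed $a\in(\tfrac12,1)$. By Lemma~\ref{bessel} and the skew-product, $(W_t)_{t\ge T_a(W)}$ then has exactly the law of a planar Brownian motion $B$ (started on $\partial B(0;a)$) conditioned on the positive-probability event $\{T_{1/2}(B)<T_1(B)\}$, so absolute continuity with respect to unconditioned Brownian motion is immediate and the strong Markov property plus Lemma~\ref{disconnection} apply cleanly at $T_\gamma(B)$. The price is a residual term $\IP(T_\gamma(W)\le T_a(W))$, which tends to $0$ as $a\uparrow 1$; this is precisely where the hypothesis $W_0\notin\{\gamma(0),\gamma(1)\}$ enters the paper's argument, since it guarantees $W$ cannot meet $\gamma$ before leaving a thin annulus near the unit circle.

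Your route instead requires the strong Markov property for $W$ (equivalently, that $H(t^*)$ is a stopping time for the pair $(Y,X)$ with the appropriate restart) together with the full $h$-transform identification of a Bessel process started from a strictly positive level with conditioned Brownian motion. Both facts are standard, but they go beyond what Lemma~\ref{bessel}(ii) literally provides: that lemma only compares the two processes between successive \emph{upward} hitting times, whereas after $t^*$ the radial part may well decrease. So your sketch is correct in principle, but to make it rigorous within the paper's framework you would need to import additional Bessel-process machinery. The paper's $a\to 1$ device is a cleaner, fully self-contained way to reach the same conclusion, and it also makes the role of the hypothesis on $W_0$ more transparent.
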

\begin{proof}
The idea of the proof is as follows. We run the process $W_t$ till it hits $\partial B(0;a)$, where $a<1$ is close to 1. From that point the process is distributed as a conditioned Brownian motion. We run the Brownian motion till it hits the trace of the curve $\gamma$. From that point the Brownian motion winds around such that the event (ii) cannot occur, by Lemma \ref{disconnection}.

Let $T_a(W) = \inf\{t\geq 0: |W_t| = a\}$ and let $P$ be the law of $W_{T_a(W)}$. Let $B_t$ be a planar Brownian motion with starting point distributed according to the law $P$ and stopped when it hits the unit circle. Let $\tau = \inf\{t>0: |W_t|=1\}$. By Lemma \ref{bessel} and the skew-product representation, if $a\in(\frac{1}{2},1)$, the process
\[
(W_t, T_a(W) \leq t \leq \tau)
\]
has the same law as
\[
(B_t, 0\leq t \leq T_1(B)) \text{ conditional on } \{T_{1/2}(B) < T_1(B)\},
\]
where $T_1(B) = \inf\{t\geq 0:|B_t|=1\}$. Let $E_1,E_2$ be similar to the events (i) and (ii), respectively, from the statement of the lemma, but with $B$ instead of $W$, i.e.
\begin{align*}
E_1 &= \{\gamma\cap B \neq \emptyset\},\\
E_2 &= \{\text{for all } \varepsilon>0 \text{ there exist curves } \gamma',\gamma'' \text{ such that } d_{\infty}(\gamma,\gamma')<\varepsilon, \\
&\qquad d_{\infty}(B,\gamma'')<\varepsilon, \gamma'\cap\gamma'' = \emptyset \}.
\end{align*}
Let $T_{\gamma}(W) = \inf\{ t\geq 0 : W_t\in \gamma\}$ be the first time $W_t$ hits the trace of the curve $\gamma$.

The probability of the intersection of the events (i) and (ii) from the statement of the lemma is bounded above by
\begin{align}
&\IP(E_1\cap E_2 \mid T_{1/2}(B) < T_1(B)) + \IP(T_{\gamma}(W) \leq T_a(W)) \nonumber \\
&\leq \frac{\IP(E_2 \mid E_1) \IP(E_1)}{\IP(T_{1/2}(B) < T_1(B))} + \IP(T_{\gamma}(W) \leq T_a(W)). \label{intersectionbound}
\end{align}
The second term in \eqref{intersectionbound} converges to 0 as $a\to 1$, by the assumption that $W_0\not\in\{\gamma(0),\gamma(1)\}$ a.s. The first term in \eqref{intersectionbound} is equal to 0. This follows from the fact that
\begin{equation}\label{condprobzero}
\IP(E_2\mid E_1) = 0,
\end{equation}
which we prove below, using Lemma \ref{disconnection}.

To prove \eqref{condprobzero} note that $E_1 = \{ T_{\gamma}(B) \leq T_1(B)\}$, where $T_{\gamma}(B) = \inf\{t\geq 0 : B_t \in \gamma \}$. Define $\delta = 1-|B_{T_{\gamma}(B)}|$ and note that $\delta>0$ a.s. The time $T_{\gamma}(B)$ is a stopping time and hence, by the strong Markov property, $(B_t, t\geq T_{\gamma}(B))$ is a Brownian motion. Therefore, by translation and scale invariance, we can apply Lemma \ref{disconnection} to the process $(B_t, t\geq T_{\gamma}(B))$ stopped when it hits the boundary of the ball centered at $B_{T_{\gamma}(B)}$ with radius $\delta$. It follows that \eqref{condprobzero} holds.
\end{proof}

\begin{proof}[Proof of Theorem \ref{thmbrownianmotion}]
For $\delta_0>0$ we say that a curve $\gamma:[0,1]\to\IC$ has a \emph{$\delta_0$-touching} $(s,t)$ if $(s,t)$ is a touching and we can take $\delta=\delta_0$ in Definition \ref{deftouching}, and moreover $A \cap \partial B(\gamma(s);\delta_0) \neq \emptyset$ for all $A \in \{ \gamma[0,s), \gamma(s,t), \gamma(t,1] \}$. The last condition ensures that if $(s,t)$ is a $\delta_0$-touching then $\gamma$ makes excursions from $\partial B(\gamma(s);\delta_0)$ which visit $\gamma(s)$.

Since $B_t \neq B_0$ for all $t\in(0,1]$ a.s., we have that $(0,t)$ is not a touching for all $t\in(0,1]$ a.s. By time inversion, $B_1 - B_{1-u}$, $0 \leq u \leq 1$, is a planar Brownian motion and hence $(s,1)$ is not a touching for all $s\in[0,1)$ a.s. For every touching $(s,t)$ with $0<s<t<1$ there exists $\delta'>0$ such that for all $\delta\leq\delta'$ we have that $(s,t)$ is a $\delta$-touching a.s. (A touching $(s,t)$ that is not a $\delta$-touching for any $\delta>0$ could only exist if $B_u = B_0$ for all $u\in[0,s]$ or $B_u = B_1$ for all $u\in[t,1]$.) We prove that for every $\delta>0$ we have almost surely,
\begin{equation}\label{toprove}
B \text{ has no } \delta\text{-touchings } (s,t) \text{ with } 0<s<t<1.
\end{equation}
By letting $\delta\to 0$ it follows that $B$ has no touchings a.s.

To prove \eqref{toprove}, fix $\delta>0$ and let $z\in\IC$. We define excursions $W^n$, for $n\in\IN$, of the Brownian motion $B$ as follows. Let
\[
\tau_0 = \inf\{u\geq 0 : |B_u - z| = 2\delta/3\},
\]
and define for $n\geq 1$,
\begin{align*}
\sigma_n &= \inf \{ u>\tau_{n-1} : |B_u - z| = \delta/3\},\\
\rho_n &= \sup\{ u<\sigma_n : |B_u - z| = 2\delta/3\},\\
\tau_n &= \inf\{ u>\sigma_n : |B_u - z| = 2\delta/3\}.
\end{align*}
Note that $\rho_n<\sigma_n<\tau_n<\rho_{n+1}$ and that $\rho_n,\sigma_n,\tau_n$ may be infinite. The reason that we take $2\delta/3$ instead of $\delta$ is that we will consider $\delta$-touchings $(s,t)$ not only with $B_s=z$ but also with $|B_s-z|<\delta/3$. We define the excursion $W^n$ by
\[
W_u^n = B_u, \qquad \rho_n \leq u \leq \tau_n.
\]
Observe that $W^n$ has, up to a rescaling in space and time and a translation, the same law as the process $W$ defined above Lemma \ref{disconnection}. This follows from Lemma \ref{bessel}, the skew-product representation and Brownian scaling.

If $B$ has a $\delta$-touching $(s,t)$ with $|B_s-z|<\delta/3$, then there exist $m\neq n$ such that
\begin{itemize}
\item[(i)] $W^m \cap W^n \neq \emptyset$,
\item[(ii)] for all $\varepsilon>0$ there exist curves $\gamma^m,\gamma^n$ such that $d_{\infty}(\gamma^m,W^m)<\varepsilon$, $d_{\infty}(\gamma^n,W^n)<\varepsilon$ and $\gamma^m\cap\gamma^n = \emptyset$.
\end{itemize}
By Lemma \ref{Wnotouching}, with $W^m$ playing the role of $W$ and $W^n$ of $\gamma$, for each $m,n$ such that $m\neq n$ the intersection of the events (i) and (ii) has probability 0. Here we use the fact that $W^m_{\rho_m} \not\in \{W^n_{\rho_n},W^n_{\tau_n}\}$ a.s. Hence $B$ has no $\delta$-touchings $(s,t)$ with $|B_s-z|<\delta/3$ a.s. We can cover the plane with a countable number of balls of radius $\delta/3$ and hence $B$ has no $\delta$-touchings a.s.
\end{proof}

\begin{proof}[Proof of Corollary \ref{corbrownianloop}]
First we prove part (i). For any $u_0\in(0,1)$, the laws of the processes $B_u^{\Loop}$, $0\leq u\leq u_0$, and $B_u$, $0\leq u\leq u_0$, are mutually absolutely continuous, see e.g.\ Exercise 1.5(b) of \cite{MorPer}. Hence by Theorem \ref{thmbrownianmotion} the process $B_u^{\Loop}$, $0\leq u\leq 1$, has no touchings $(s,t)$ with $0\leq s < t \leq u_0$ a.s., for any $u_0\in(0,1)$. Taking a sequence of $u_0$ converging to 1, we have that $B_u^{\Loop}$, $0\leq u\leq 1$, has no touchings $(s,t)$ with $0\leq s < t < 1$ a.s. By time reversal, $B_1^{\Loop} - B_{1-u}^{\Loop}$, $0\leq u\leq 1$, is a planar Brownian loop. It follows that $B_u^{\Loop}$, $0\leq u\leq 1$, has no touchings $(s,1)$ with $s\in(0,1)$ a.s. By Lemma \ref{disconnection}, the time pair $(0,1)$ is not a touching a.s.

Second we prove part (ii). By Corollary \ref{corbrownianloop} and the fact that there are countably many loops in $\mathcal{L}$, we have that every loop in $\mathcal{L}$ has no touchings a.s. We prove that each pair of loops in $\mathcal{L}$ has no mutual touchings a.s. To this end, we discover the loops in $\mathcal{L}$ one by one in decreasing order of their diameter, similarly to the construction in Section 4.3 of \cite{NacWer}. Given a set of discovered loops $\gamma_1,\ldots,\gamma_{k-1}$, we prove that the next loop $\gamma_k$ and the already discovered loop $\gamma_i$ have no mutual touchings a.s., for each $i\in\{1,\ldots,k-1\}$ separately. Note that, conditional on $\gamma_1,\ldots,\gamma_{k-1}$, we can treat $\gamma_i$ as a deterministic loop, while $\gamma_k$ is a (random) planar Brownian loop. Therefore, to prove that $\gamma_k$ and $\gamma_i$ have no mutual touchings a.s., we can define excursions of $\gamma_i$ and $\gamma_k$ and apply Lemma \ref{Wnotouching} in a similar way as in the proof of Theorem \ref{thmbrownianmotion}. We omit the details.
\end{proof}

\section{Distance between Brownian loops}\label{sec:distance}

In this section we give two estimates, on the Euclidean distance between non-intersecting loops in the Brownian loop soup and on the overlap between intersecting loops in the Brownian loop soup. We will only use the first estimate in the proof of Theorem \ref{mainthmBLS}. As a corollary to the two estimates, we obtain a one-to-one correspondence between clusters composed of ``large'' loops from the random walk loop soup and clusters composed of ``large'' loops from the Brownian loop soup. This is an extension of Corollary 5.4 of \cite{LawTru}. For intersecting loops $\gamma_1,\gamma_2$ we define their \emph{overlap} by
\begin{align*}
\text{overlap}(\gamma_1,\gamma_2) = & \, 2 \sup \{ \varepsilon \geq 0 : \text{for all loops } \gamma_1',\gamma_2' \text{ such that } d_{\infty}(\gamma_1,\gamma_1')\leq\varepsilon, \\
& d_{\infty}(\gamma_2,\gamma_2') \leq \varepsilon, \text{ we have that } \gamma'_1 \cap \gamma'_2 \neq \emptyset \}.
\end{align*}

\begin{proposition}\label{correspondence1}
Let $\BLS$ be a Brownian loop soup with intensity $\lambda\in(0,\infty)$ in a bounded, simply connected domain $D$. Let $c>0$ and $16/9 < \theta < 2$. For all non-intersecting loops $\gamma,\gamma'\in \mathcal{L}$ of time length at least $N^{\theta-2}$ we have that $d_{\mathcal{E}}(\gamma,\gamma') \geq c N^{-1} \log N$, with probability tending to 1 as $N\to\infty$.
\end{proposition}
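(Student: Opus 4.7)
The plan is a first-moment calculation: show that the expected number of pairs of loops in $\mathcal{L}$ of time length at least $N^{\theta-2}$ that are non-intersecting but at Euclidean distance at most $\epsilon_N := cN^{-1}\log N$ tends to $0$. By Mecke's formula for Poisson point processes, this expectation equals
\[
\frac{\lambda^2}{2} \int\int \mathbf{1}\{0 < d_{\mathcal{E}}(\gamma_1,\gamma_2) \leq \epsilon_N,\ t_{\gamma_i} \geq N^{\theta-2}\}\, \mu_D(d\gamma_1)\, \mu_D(d\gamma_2).
\]
Unwinding $\mu_D(d\gamma) = (2\pi t_0^2)^{-1}\, \mu^{\sharp}_{z,t_0}(d\gamma)\, dt_0\, dA(z)$ turns this into an integral over starting points $z_i \in D$ and time lengths $t_i \geq N^{\theta-2}$, weighted by $(t_1 t_2)^{-2}$, of a Brownian bridge probability.

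The key step, and the main obstacle, is to control this Brownian bridge probability via Beurling's projection theorem. Heuristically, if $\gamma_1$ and $\gamma_2$ are within distance $\epsilon$ but disjoint, then at the stopping time $\tau$ at which $\gamma_1$ first enters the $\epsilon$-neighbourhood of $\gamma_2$, the remaining arc of $\gamma_1$---comparable to a free planar Brownian motion via absolute continuity, valid when bounded away from the bridge endpoint---must avoid the connected obstacle $\gamma_2$ while reaching distance of order $\sqrt{t_1 \wedge t_2}$, a constant fraction of the obstacle's diameter. Beurling's projection theorem bounds this avoidance probability by $C(\epsilon/\sqrt{t_1 \wedge t_2})^{1/2}$. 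Making this rigorous requires (a) controlling the Brownian bridge via a free Brownian motion away from its endpoint, (b) handling degenerate configurations in which $\gamma_2$ is geometrically much smaller than its natural diameter $\sqrt{t_2}$, and (c) treating pairs of loops of very different time-length scales via a dyadic decomposition of the $t_1, t_2$ integrals.

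Once the Beurling bound is in hand, the integration is routine. The weight $(t_1 t_2)^{-2}$ concentrates the mass near $t_i \sim N^{\theta-2}$, contributing a factor $\sim N^{2(2-\theta)}$ that captures the effective number of pairs of ``large'' loops, while the Beurling factor evaluated at this dominant scale contributes $(\epsilon_N/N^{(\theta-2)/2})^{1/2} \asymp N^{-\theta/4}$ up to logarithms. Combining, the expected count of bad pairs is bounded by $(\text{polylog})\cdot N^{2(2-\theta) - \theta/4} = (\text{polylog})\cdot N^{4 - 9\theta/4}$, which tends to $0$ precisely when $\theta > 16/9$; this matches the hypothesis of the proposition and explains where the threshold $16/9$ comes from. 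The conclusion that $d_{\mathcal{E}}(\gamma,\gamma') \geq \epsilon_N$ for all non-intersecting loop pairs with probability tending to $1$ then follows by Markov's inequality.
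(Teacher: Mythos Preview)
Your proposal is correct and follows essentially the same approach as the paper: a first-moment bound on the number of bad pairs, with the key probability estimate coming from the Beurling projection theorem applied to a Brownian loop approaching but not hitting a given curve, together with a separate control on loops whose diameter is atypically small relative to $\sqrt{t}$. The only cosmetic difference is that the paper organizes the pair count via a sequential exploration of loops in decreasing order of time length (so that one loop in each pair is conditioned on and treated as deterministic in Lemma~\ref{minimaldistance}), whereas you phrase the same computation through Mecke's formula; both routes yield the exponent $4-9\theta/4$ and hence the threshold $\theta>16/9$.
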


\begin{proposition}\label{correspondence2}
Let $\BLS$ be a Brownian loop soup with intensity $\lambda\in(0,\infty)$ in a bounded, simply connected domain $D$. Let $c>0$ and $\theta<2$ sufficiently close to 2. For all intersecting loops $\gamma,\gamma'\in \mathcal{L}$ of time length at least $N^{\theta-2}$ we have that $\text{\textnormal{overlap}}(\gamma,\gamma') \geq c N^{-1} \log N$, with probability tending to 1 as $N\to\infty$.
\end{proposition}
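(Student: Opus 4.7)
The plan is to mimic the strategy used for Proposition \ref{correspondence1}: reduce, via the Campbell/Mecke formula for Poisson point processes, to a quantitative non-tangency estimate for a single pair of independent Brownian bridges, and then combine this estimate with a count of the number of ``large'' loops in $\mathcal{L}$.

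By the definition of $\mu$, the mass of loops in $D$ of time length at least $N^{\theta-2}$ is bounded by
\[
\int_D \int_{N^{\theta-2}}^\infty \frac{dt_0\, dA(z)}{2\pi t_0^2} = \frac{|D|}{2\pi}\, N^{2-\theta}.
\]
Applied to the Poisson point process $\mathcal{L}$, Campbell's formula then bounds the expected number of ordered pairs $(\gamma_1,\gamma_2)$ in $\mathcal{L}$, both of time length $\geq N^{\theta-2}$, which intersect and have overlap less than $cN^{-1}\log N$, by $C\,N^{2(2-\theta)}\, q_N$, where $q_N$ is the probability of the analogous event under the law of two independent loops drawn from $\mu_D$ conditioned on having time length $\geq N^{\theta-2}$. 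By Markov's inequality it suffices to show $q_N \leq N^{-\beta}$ for any fixed $\beta>0$ of our choosing, and then take $\theta<2$ with $2(2-\theta)<\beta$.

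To bound $q_N$, I would establish a quantitative version of the ``no mutual touching'' conclusion of Corollary \ref{corbrownianloop}(ii). Fix an intersection point $z$ of $\gamma_1$ and $\gamma_2$ (by a further union bound over a $cN^{-1}\log N$-net of candidate points, $z$ may be taken deterministic up to a sub-polynomial factor). Decompose each bridge into its excursions between $\partial B(z,\varepsilon)$ and $B(z,\varepsilon/2)$; by Lemma \ref{bessel} and the skew-product representation these excursions are, up to a space-time rescaling, distributed as independent copies of the process $W$ defined just above Lemma \ref{disconnection}. On the event $\{\text{overlap}(\gamma_1,\gamma_2)<2\varepsilon\}$ none of these excursion pairs can ``robustly intersect'' in the sense of Lemma \ref{Wnotouching}. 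The new input needed is a scale-explicit version of Lemma \ref{disconnection}: the double-winding event $E_\varepsilon$ occurs at every dyadic scale $\varepsilon=7^{-n}$ with at least a fixed probability $p_0>0$, and the events at different scales are independent (this independence was already used implicitly in the Borel--Cantelli step of the proof of Lemma \ref{disconnection}). Consequently
\[
\IP\bigl(E_\varepsilon \text{ fails at every scale } \varepsilon \in [N^{-1}\log N,\, 1]\bigr) \leq (1-p_0)^{c\log N} = N^{-c'},
\]
and feeding this back into the Lemma \ref{Wnotouching} argument yields $q_N \leq N^{-\beta}$ for any fixed $\beta>0$ at the cost of enlarging the constant $c$ in front of $N^{-1}\log N$.

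The principal obstacle is precisely this quantitative upgrade: Section \ref{sec:notouchings} only establishes that touchings fail to exist almost surely, using a single Borel--Cantelli step, whereas here one needs a genuine polynomial rate. Handling this requires carefully exploiting the scale-by-scale independence of the winding events $E_\varepsilon$ and managing the extra conditioning that arises when one forces the two bridges to share a point; by Lemma \ref{bessel} this conditioning amounts to replacing one of the coordinates of one of the planar bridges by a three-dimensional Bessel process, which has well-controlled winding and modulus-of-continuity properties. Once the quantitative disconnection estimate is in place, the Campbell/Mecke computation and Markov's inequality described above close the proof.
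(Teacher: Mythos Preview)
The paper does not actually prove Proposition \ref{correspondence2}; it only asserts that the proof uses ``similar techniques'' to Proposition \ref{correspondence1} and omits it. So there is no detailed argument to compare against, but one can compare your proposal to the template of Lemma \ref{minimaldistance} and the proof of Proposition \ref{correspondence1}.

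Your core idea---making Lemma \ref{disconnection} quantitative by exploiting the independence of the events $E_{7^{-n}}$ across scales, so that the probability the Brownian path fails to robustly disconnect at every scale down to $N^{-1}\log N$ is at most $(1-p_0)^{c\log N}=N^{-c'}$---is correct and is the natural analogue of the Beurling step in Lemma \ref{minimaldistance}. Once one loop first hits the other (the latter treated as deterministic by conditioning, via the sequential discovery used in the proof of Proposition \ref{correspondence1}), the strong Markov property makes the continuation a Brownian motion, and the occurrence of a single $E_\varepsilon$ centred at the hitting point forces $\text{overlap}\geq 2\varepsilon$: any $\varepsilon$-perturbation of the first loop still disconnects an annulus that any $\varepsilon$-perturbation of the second loop must cross.

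Two points in your write-up do not work as stated. First, the ``union bound over a $cN^{-1}\log N$-net of candidate points'' is not sub-polynomial: such a net in a bounded planar domain has order $N^2/(\log N)^2$ points, which swamps your $N^{-c'}$ gain. You should instead follow the template of Lemma \ref{minimaldistance}: convert the bridge to Brownian motion via the density trick, stop at the \emph{first} intersection time with the other loop, and run the scale argument from that random point using the strong Markov property. This also makes the excursion decomposition and the Bessel digression unnecessary---after the first hit you have honest Brownian motion, with no extra conditioning to manage. Second, enlarging the constant $c$ in $cN^{-1}\log N$ does not yield an arbitrary exponent $\beta$: the number of independent scales between $cN^{-1}\log N$ and the macroscopic scale is $(1+o(1))\log_7 N$ regardless of $c$, so the exponent is fixed by $p_0$. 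This is harmless for the statement, which only requires $\theta$ sufficiently close to $2$, but the claim as you phrased it is incorrect.
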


\begin{corollary}\label{lawlerext}
Let $D$ be a bounded, simply connected domain, take $\lambda\in(0,\infty)$ and $\theta<2$ sufficiently close to 2. Let $\mathcal{L}, \mathcal{L}^{N^{\theta-2}}, \mathcal{\tilde L}_N, \mathcal{\tilde L}_N^{N^{\theta-2}}$ be defined as in Section \ref{sec:definitions}. For every $N$ we can define $\mathcal{\tilde L}_N$ and $\mathcal{L}$ on the same probability space in such a way that the following holds with probability tending to 1 as $N\to\infty$. There is a one-to-one correspondence between the clusters of $\mathcal{\tilde L}_N^{N^{\theta-2}}$ and the clusters of $\mathcal{L}^{N^{\theta-2}}$ such that for corresponding clusters, $\tilde C\subset \mathcal{\tilde L}_N^{N^{\theta-2}}$ and $C\subset\mathcal{L}^{N^{\theta-2}}$, there is a one-to-one correspondence between the loops in $\tilde C$ and the loops in $C$ such that for corresponding loops, $\tilde\gamma\in \tilde C$ and $\gamma\in C$, we have that $d_{\infty}(\gamma,\tilde \gamma) \leq c N^{-1} \log N$, for some constant $c$ which does not depend on $N$.
\end{corollary}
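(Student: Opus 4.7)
The plan is to reduce the claim to three ingredients already at hand: Corollary 5.4 of \cite{LawTru}, Proposition \ref{correspondence1}, and Proposition \ref{correspondence2}. The first of these already provides, on a common probability space, a bijection between loops of $\mathcal{\tilde L}_N^{N^{\theta-2}}$ and loops of $\mathcal{L}^{N^{\theta-2}}$ such that corresponding loops satisfy $d_\infty(\tilde\gamma,\gamma) \le c_0 N^{-1}\log N$ for a fixed $c_0>0$, with probability tending to $1$. What remains is to promote this loop-level bijection to a cluster-level bijection, which amounts to checking that the bijection preserves the ``intersects'' relation on loops.

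To this end, I would fix a constant $c_* > 2 c_0$ and apply Propositions \ref{correspondence1} and \ref{correspondence2} with $c = c_*$. These yield, simultaneously with probability tending to $1$, that any non-intersecting pair $\gamma,\gamma'\in\mathcal{L}^{N^{\theta-2}}$ satisfies $d_{\mathcal{E}}(\gamma,\gamma') \ge c_* N^{-1}\log N$, and any intersecting pair satisfies $\text{overlap}(\gamma,\gamma') \ge c_* N^{-1}\log N$. The restriction ``$\theta$ sufficiently close to $2$'' in the statement of the corollary is inherited from Proposition \ref{correspondence2}; Proposition \ref{correspondence1} itself needs only $\theta > 16/9$.

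On the intersection of these three high-probability events, preservation of the intersection relation follows by direct computation. If $\gamma,\gamma'$ do not intersect, then by the triangle inequality
\[
d_{\mathcal{E}}(\tilde\gamma,\tilde\gamma') \;\ge\; d_{\mathcal{E}}(\gamma,\gamma') - d_{\infty}(\tilde\gamma,\gamma) - d_{\infty}(\tilde\gamma',\gamma') \;\ge\; (c_* - 2c_0)\,N^{-1}\log N \;>\; 0,
\]
so $\tilde\gamma\cap\tilde\gamma' = \emptyset$. Conversely, if $\gamma,\gamma'$ intersect, then $c_0 < c_*/2$ together with $d_{\infty}(\tilde\gamma,\gamma),\,d_{\infty}(\tilde\gamma',\gamma') \le c_0 N^{-1}\log N$ forces, by the very definition of overlap, that $\tilde\gamma\cap\tilde\gamma' \neq \emptyset$.

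Since clusters are the connected components of the graph on loops whose edges are intersections, the loop bijection is an isomorphism of these intersection graphs and therefore descends canonically to a bijection between clusters of $\mathcal{\tilde L}_N^{N^{\theta-2}}$ and clusters of $\mathcal{L}^{N^{\theta-2}}$; the $d_\infty$-bound on corresponding loops is inherited verbatim from \cite{LawTru} with the constant $c := c_0$. I do not foresee any serious obstacle here: the substantive probabilistic work is already done in Propositions \ref{correspondence1} and \ref{correspondence2}, and the corollary is really a combinatorial reorganization of those two estimates together with the loop-level coupling of \cite{LawTru}.
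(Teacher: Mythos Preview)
Your proposal is correct and follows exactly the paper's approach: use the loop-level coupling from Corollary~5.4 of \cite{LawTru}, then invoke Propositions~\ref{correspondence1} and~\ref{correspondence2} with a constant strictly larger than twice the \cite{LawTru} constant to show the bijection preserves the intersection relation, hence restricts to a bijection of clusters. The paper's proof is a two-sentence summary of precisely this argument (with slightly different but inessential numerical choices of constants); you have merely unpacked the combinatorics that the paper leaves implicit.
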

\begin{proof}
Let $c$ be two times the constant in Corollary 5.4 of \cite{LawTru}. Combine this corollary and Propositions \ref{correspondence1} and \ref{correspondence2} with the $c$ in Propositions \ref{correspondence1} and \ref{correspondence2} equal to six times the constant in Corollary 5.4 of \cite{LawTru}.
\end{proof}

In Propositions \ref{correspondence1} and \ref{correspondence2} and Corollary \ref{lawlerext}, the probability tends to 1 as a power of $N$. This can be seen from the proofs. We will use Proposition \ref{correspondence1}, but we will not use Proposition \ref{correspondence2} in the proof of Theorem \ref{mainthmBLS}. Because of this, and because the proofs of Propositions \ref{correspondence1} and \ref{correspondence2} are based on similar techniques, we omit the proof of Proposition \ref{correspondence2}. To prove Proposition \ref{correspondence1}, we first prove two lemmas.

\begin{lemma}\label{diameterBrownianMotion}
Let $B$ be a planar Brownian motion and let $B^{\Loop,t_0}$ be a planar Brownian loop with time length $t_0$. There exist $c_1,c_2>0$ such that, for all $0 < \delta < \delta'$ and all $N \geq 1$,
\begin{align}
\IP(\diam B[0,N^{-\delta}] \leq N^{-\delta' / 2}) &\leq c_1 \exp(- c_2 N^{\delta' - \delta}), \label{diameter1} \\
\IP(\diam B^{\Loop,N^{-\delta}} \leq N^{-\delta' / 2}) &\leq c_1 \exp(- c_2 N^{\delta' - \delta}).\label{diameter2}
\end{align}
\end{lemma}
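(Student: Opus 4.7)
The plan is to reduce both inequalities, via Brownian scaling, to a single small-ball estimate
$$\IP\bigl(\diam\tilde B[0,1]\leq\eta\bigr)\leq c_1\exp(-c_2/\eta^2)\qquad(*)$$
for a standard planar Brownian motion $\tilde B$ and small $\eta>0$, and then to derive $(*)$ from the exponential tail of the first exit time of the unit disk.

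For \eqref{diameter1}, Brownian scaling gives $\diam B[0,N^{-\delta}]\stackrel{d}{=}N^{-\delta/2}\diam\tilde B[0,1]$, so the estimate is exactly $(*)$ applied with $\eta=N^{-(\delta'-\delta)/2}$, for which $1/\eta^2=N^{\delta'-\delta}$. To prove $(*)$, observe that $\tilde B_0=0$, so $\diam\tilde B[0,1]\leq\eta$ implies $\sup_{t\in[0,1]}|\tilde B_t|\leq\eta$, i.e.\ $\tau_\eta>1$, where $\tau_r=\inf\{t\geq 0:|\tilde B_t|=r\}$. A further rescaling gives $\IP(\tau_\eta>1)=\IP(\tau_1>\eta^{-2})$, and the strong Markov property iterated at integer times yields $\IP(\tau_1>n)\leq p^n$ with $p=\sup_{|x|\leq 1}\IP_x(\tau_1>1)<1$. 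This gives the exponential tail $\IP(\tau_1>s)\leq c_1 e^{-c_2 s}$ for all $s\geq 0$, which is $(*)$.

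For \eqref{diameter2}, the scaling property of Brownian bridges reduces the estimate to the analogue of $(*)$ for a planar Brownian loop $X$ of time length $1$ (bridge from $0$ to $0$). The idea is to restrict $X$ to the time interval $[0,1/2]$ and compare with planar Brownian motion. The law of $X|_{[0,1/2]}$ is absolutely continuous with respect to that of $\tilde B|_{[0,1/2]}$, with Radon--Nikodym derivative $p_{1/2}(X_{1/2},0)/p_1(0,0)\leq 2$, where $p_t$ denotes the 2D heat kernel. Consequently,
$$\IP\bigl(\diam X\leq\eta\bigr)\leq\IP\bigl(\diam X[0,1/2]\leq\eta\bigr)\leq 2\,\IP\bigl(\diam\tilde B[0,1/2]\leq\eta\bigr),$$
and the Brownian motion argument applied on $[0,1/2]$, which only affects the constants $c_1,c_2$, concludes the proof.

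No serious obstacle is expected: everything follows from Brownian scaling and the classical exponential tail for the exit time of the unit disk; the only mild subtlety is the reduction from the bridge case to the Brownian motion case via absolute continuity on $[0,1/2]$.
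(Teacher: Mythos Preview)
Your proof is correct and takes a genuinely different route from the paper's. The paper works with one-dimensional projections: it bounds $\IP(\diam B^{\Loop,1}\leq\eta)$ by $\IP(\sup_{[0,1]}|X^{\Loop}|\leq\eta)^2$ for a one-dimensional Brownian bridge $X^{\Loop}$, and then invokes the explicit series for the Kolmogorov--Smirnov distribution to extract the exponential bound. It then deduces the Brownian-motion estimate \eqref{diameter1} from the bridge estimate \eqref{diameter2} via the pathwise inequality $\sup|X_t^{\Loop}|\leq 2\sup|X_t|$. You reverse the logic: you handle the Brownian-motion case first, directly in two dimensions, by a soft argument (the exponential tail of the exit time of the unit disk via the strong Markov property), and then pass from Brownian motion to the loop by absolute continuity on $[0,1/2]$ with Radon--Nikodym derivative bounded by $2$. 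Your argument is more elementary in that it avoids the explicit Kolmogorov--Smirnov formula, and more robust in that it would extend immediately to higher dimensions or to other processes with Brownian-type scaling and sub-exponential exit-time tails; the paper's route, on the other hand, yields more explicit constants.
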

\begin{proof}
First we prove \eqref{diameter2}. By Brownian scaling,
\begin{align*}
&\IP(\diam B^{\Loop,N^{-\delta}} \leq N^{-\delta' / 2}) \nonumber\\
& = \IP(\diam B^{\Loop,1} \leq N^{-(\delta' - \delta) / 2}) \nonumber\\
& \textstyle \leq \IP(\sup_{t\in[0,1]} |X_t^{\Loop}| \leq N^{-(\delta' - \delta) / 2})^2,
\end{align*}
where $X^{\Loop}_t$ is a one-dimensional Brownian bridge starting at 0 with time length 1. The distribution of $\sup_{t\in[0,1]} |X_t^{\Loop}|$ is the asymptotic distribution of the (scaled) Kolmogorov-Smirnov statistic, and we can write, see e.g.\ Theorem 1 of \cite{Fel},
\begin{align}
& \textstyle \IP(\sup_{t\in[0,1]} |X_t^{\Loop}| \leq N^{-(\delta' - \delta) / 2}) \nonumber\\
&= \sqrt{2\pi} N^{(\delta'-\delta)/2} \textstyle \sum_{k=1}^{\infty} e^{-(2k-1)^2\pi^2 8^{-1} N^{\delta'-\delta}} \nonumber\\
&\leq \sqrt{2\pi} N^{(\delta'-\delta)/2} \textstyle \sum_{k=1}^{\infty} e^{-(2k-1)\pi^2 8^{-1} N^{\delta'-\delta}} \nonumber\\
&= \sqrt{2\pi} N^{(\delta'-\delta)/2} e^{\pi^2 8^{-1} N^{\delta'-\delta}} \textstyle \sum_{k=1}^{\infty} ( e^{-2\pi^2 8^{-1} N^{\delta'-\delta}} )^k \nonumber\\
&= \sqrt{2\pi} N^{(\delta'-\delta)/2} e^{-\pi^2 8^{-1} N^{\delta'-\delta}} (1-e^{-2\pi^2 8^{-1} N^{\delta'-\delta}})^{-1}\nonumber\\
&\leq c e^{-N^{\delta'-\delta}},\label{estimatesuploop}
\end{align}
for some constant $c$ and all $0<\delta<\delta'$ and all $N\geq 1$. This proves \eqref{diameter2}.

Next we prove \eqref{diameter1}. We can write $X_t^{\Loop} = X_t - t X_1$, where $X_t$ is a one-dimensional Brownian motion starting at 0. Hence
\begin{equation}\label{couplingbridge}
\sup_{t\in[0,1]} |X_t^{\Loop}| \leq \sup_{t\in[0,1]} |X_t| + |X_1| \leq  2 \sup_{t\in[0,1]} |X_t|.
\end{equation}
By Brownian scaling, \eqref{couplingbridge} and \eqref{estimatesuploop},
\begin{align*}
& \IP(\diam B[0,N^{-\delta}] \leq N^{-\delta'/2}) \\
&= \IP(\diam B[0,1] \leq N^{-(\delta'-\delta)/2}) \\
&\textstyle \leq \IP(\sup_{t\in [0,1]} |X_t| \leq N^{-(\delta'-\delta)/2})^2 \\
&\textstyle \leq \IP(\sup_{t\in [0,1]} |X_t^{\Loop}| \leq 2 N^{-(\delta'-\delta)/2})^2 \\
&\leq c^2 e^{-\frac{1}{2} N^{\delta'-\delta}}.
\end{align*}
This proves \eqref{diameter1}.
\end{proof}

\begin{lemma}\label{minimaldistance}
There exist $c_1,c_2>0$ such that the following holds. Let $c>0$ and $0<\delta<\delta'<2$. Let $\gamma$ be a (deterministic) loop with $\diam\gamma \geq N^{-\delta'/2}$. Let $B^{\Loop,t_0}$ be a planar Brownian loop starting at 0 of time length $t_0 \geq N^{-\delta}$. Then for all $N> 1$,
\begin{align*}
&\IP(0<d_{\mathcal{E}}(B^{\Loop,t_0},\gamma) \leq c N^{-1} \log N) \\
& \leq c_1 N^{-1/2+\delta'/4} (c \log N)^{1/2} + c_1 \exp(-c_2N^{\delta'-\delta})
\end{align*}
\end{lemma}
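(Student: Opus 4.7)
The plan is to split on the diameter of the Brownian loop. Set $r := N^{-\delta'/2}$ and $d := cN^{-1}\log N$. The hypotheses $0 < \delta < \delta' < 2$ put us in the regime $d \ll r$ and $t_0 \geq N^{-\delta} \gg r^2 = N^{-\delta'}$, so the Brownian loop has ample time to explore a disk of radius $r$, while the scale $d$ is much smaller than the diameter of $\gamma$. Let $F := \{\diam B^{\Loop,t_0} \geq r\}$. By Lemma~\ref{diameterBrownianMotion}, $\IP(F^c) \leq c_1\exp(-c_2 N^{\delta'-\delta})$, which already produces the second term of the bound. It therefore remains to show that $\IP(E \cap F) \leq C \sqrt{d/r} = C N^{-1/2+\delta'/4}(c\log N)^{1/2}$, where $E := \{0 < d_{\mathcal{E}}(B^{\Loop,t_0},\gamma) \leq d\}$, which is the first term.

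The main probabilistic input is \emph{Beurling's projection theorem}: a planar Brownian motion started at distance $d$ from a connected compact set $K \subset \IC$ with $\diam K \geq r$ avoids $K$ before exiting a ball of radius comparable to $r$ with probability at most $C\sqrt{d/r}$, for a universal constant $C$. Since $\gamma$ is a connected loop with $\diam \gamma \geq r$, for every $p \in \gamma$ the connected component of $\gamma \cap \overline{B}(p, r/2)$ containing $p$ has diameter at least $r/2$, so Beurling's estimate is available locally near any point of $\gamma$.

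Introduce the stopping time $\tau := \inf\{ t \geq 0 : d_{\mathcal{E}}(B^{\Loop,t_0}(t), \gamma) \leq d \}$, finite on $E$. Set $z := B^{\Loop,t_0}(\tau)$ and $p := \pi_\gamma(z) \in \gamma$. By the strong Markov property, conditional on the path up to $\tau$, the future $B^{\Loop,t_0}[\tau, t_0]$ is a Brownian bridge from $z$ to $0$ of length $T := t_0 - \tau$. On the sub-event $\{\tau \leq t_0/2\}$ one has $T \geq t_0/2 \gg r^2$, so on the short window $[\tau, \tau + (r/2)^2]$ the bridge is absolutely continuous with respect to a free planar Brownian motion from $z$ with a Radon-Nikodym derivative bounded by an absolute constant; Beurling's estimate then transfers to the bridge up to a bounded factor and yields the avoidance probability $\leq C\sqrt{d/r}$. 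On the complementary sub-event $\{\tau > t_0/2\}$, apply the same reasoning to the time-reversed loop $t \mapsto B^{\Loop,t_0}(t_0 - t)$, which is again a Brownian loop started at $0$; its first approach time to the $d$-neighborhood of $\gamma$ coincides with $t_0 - \tau' \leq t_0/2$ (where $\tau'$ is the last approach time in the original), so again at least $t_0/2 \gg r^2$ of free time is available. Combining both sub-cases gives $\IP(E \cap F) \leq 2 C \sqrt{d/r}$, as required.

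The main obstacle is controlling the bridge-vs-Brownian Radon-Nikodym derivative on the window $[\tau, \tau + (r/2)^2]$ uniformly in $z$, since the explicit density ratio contains an exponential factor involving $|z|^2 / T$ and $|B^{\Loop,t_0}(\tau + (r/2)^2)|^2/(T - (r/2)^2)$. A clean way to absorb this is to condition additionally on $B^{\Loop,t_0}(\tau + (r/2)^2)$: the remaining bridge of length $T - (r/2)^2 \gg r^2$ then only contributes a deterministic drift to the window of size $r^2/\sqrt{T} \ll r$, which is negligible at the scale $r$ relevant for Beurling. Alternatively, one applies Girsanov to the bridge SDE on this short window and observes that both the drift term and its quadratic variation are bounded by universal constants, giving the required bounded Radon-Nikodym derivative.
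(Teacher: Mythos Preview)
Your high-level strategy --- stop at the first approach time $\tau$, apply the strong Markov property so that the remaining path is a Brownian bridge from $z := B^{\Loop,t_0}(\tau)$ back to $0$, and then invoke Beurling --- is the same as the paper's. The gap is in the passage from bridge to free Brownian motion. The Radon--Nikodym derivative of the bridge (from $z$ to $0$, length $T$) against Brownian motion on any window $[0,s]$ equals $\frac{T}{T-s}\exp\bigl(\tfrac{|z|^2}{2T}-\tfrac{|B_s|^2}{2(T-s)}\bigr)$, and the factor $e^{|z|^2/(2T)}$ is \emph{not} bounded by a universal constant: nothing in your setup controls $|z|$ in terms of $\sqrt{t_0}$. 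Your event $F=\{\diam B^{\Loop,t_0}\ge r\}$ gives a \emph{lower} bound on the loop's diameter, not an upper bound on $|z|$, so it is irrelevant here. Both proposed fixes inherit the same defect: in the bridge SDE $dX_u = dW_u - X_u\,du/(T-u)$ the drift is of order $|X_u|/T$, hence of order $|z|/T$ near $u=0$, so neither the drift nor its quadratic variation over $[0,s]$ is uniformly bounded; and your ``$r^2/\sqrt{T}$'' estimate for the conditioning-on-the-endpoint trick tacitly assumes $|z|=O(\sqrt{T})$, which is exactly what is missing. One can split further on $\{\max_u |B^{\Loop,t_0}(u)| \le K\sqrt{t_0}\}$, but balancing the Gaussian tail $e^{-cK^2}$ against the Girsanov factor $e^{K^2}$ costs part of the Beurling exponent and does not recover $(d/r)^{1/2}$.

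The paper avoids this altogether by going in the opposite direction: it writes $\mu^{\sharp}_{0,t_0} = 2t_0\lim_{\varepsilon\downarrow 0}\varepsilon^{-2}\mu_{0,t_0}\I_{\{|B_{t_0}|\le\varepsilon\}}$, runs Beurling for \emph{unconditioned} Brownian motion with a long window $\tfrac14 t_0$ after the approach time --- so the ``stay in the ball of radius $\tfrac14 r$'' probability over time $\tfrac14 t_0$ is handled by Lemma~\ref{diameterBrownianMotion} and produces the exponential term (this, not your diameter event $F$, is its true source) --- and then uses the ordinary Markov property at time $\tfrac34 t_0$ to bound $\IP(|B_{t_0}|\le\varepsilon\mid\,\cdot\,)\le \sup_x\IP(|B^x_{t_0/4}|\le\varepsilon)\le\tfrac{8}{\pi}\varepsilon^2 t_0^{-1}$ uniformly in the conditioning. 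The factors $t_0$ and $t_0^{-1}$ cancel, giving the bound with an absolute constant and no dependence on $|z|$.
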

\begin{proof}
We use some ideas from the proof of Proposition 5.1 of \cite{LawTru}. By time reversal, we have
\begin{align}
& \IP(0 < d_{\mathcal{E}} ( B^{\Loop,t_0}, \gamma ) \leq c N^{-1} \log N) \nonumber\\
& \leq \textstyle 2 \IP(0 < d_{\mathcal{E}} ( B^{\Loop,t_0}[0,\frac{1}{2}t_0], \gamma ) \leq c N^{-1} \log N, B^{\Loop,t_0}[0,\frac{3}{4}t_0] \cap \gamma = \emptyset) \nonumber\\
& = 4 t_0 \lim_{\varepsilon \downarrow 0} \varepsilon^{-2} \textstyle \IP(0 < d_{\mathcal{E}} ( B[0,\frac{1}{2}t_0], \gamma ) \leq c N^{-1} \log N, B[0,\frac{3}{4}t_0] \cap \gamma = \emptyset, \label{star} \\
& \qquad \textstyle |B_{t_0}| \leq \varepsilon), \nonumber
\end{align}
where $B$ is a planar Brownian motion starting at 0. The equality \eqref{star} follows from the following relation between the law $\mu^{\sharp}_{0,t_0}$ of $(B^{\Loop,t_0}_t, 0\leq t\leq t_0)$ and the law $\mu_{0,t_0}$ of $(B_t, 0\leq t\leq t_0)$:
\[
\mu^{\sharp}_{0,t_0} = 2 t_0 \lim_{\varepsilon \downarrow 0} \varepsilon^{-2} \mu_{0,t_0} \I_{\{ |\gamma(t_0)| \leq \varepsilon \}},
\]
see Section 5.2 of \cite{Law} and Section 3.1.1 of \cite{LawWer}.

Next we bound the probability
\begin{equation}\label{comesClose}
\textstyle \IP(0 < d_{\mathcal{E}} ( B[0,\frac{1}{2}t_0], \gamma ) \leq c N^{-1} \log N, B[0,\frac{3}{4}t_0] \cap \gamma = \emptyset).
\end{equation}
If the event in \eqref{comesClose} occurs, then $B_t$ hits the $c N^{-1} \log N$ neighborhood of $\gamma$ before time $\frac{1}{2}t_0$, say at the point $x$. From that moment, in the next $\frac{1}{4}t_0$ time span, $B_t$ either stays within a ball containing $x$ (to be defined below) or exits this ball without touching $\gamma$. Hence, using the strong Markov property, \eqref{comesClose} is bounded above by
\begin{equation}\label{exitsBeforeTouching}
\sup_{\substack{x\in\IC, y\in\gamma \\ |x-y| \leq c N^{-1} \log N}}
\textstyle \IP(\tau^x_y > \frac{1}{4}t_0) + \IP(B^x[0,\tau^x_y] \cap \gamma = \emptyset),
\end{equation}
where $B^x$ is a planar Brownian motion starting at $x$ and $\tau^x_y$ is the exit time of $B^x$ from the ball $B(y; \frac{1}{4} N^{-\delta' / 2})$.

To bound the second term in \eqref{exitsBeforeTouching}, recall that $\diam\gamma \geq N^{-\delta' / 2}$, so $\gamma$ intersects both the center and the boundary of the ball $B(y; \frac{1}{4} N^{-\delta' / 2})$. Hence we can apply the Beurling estimate (see e.g.\ Theorem 3.76 of \cite{Law}) to obtain the following upper bound for the second term in \eqref{exitsBeforeTouching},
\begin{equation}\label{upperboundbeurling}
c_1 ( 4c N^{\delta' / 2} N^{-1} \log N )^{1/2},
\end{equation}
for some constant $c_1>1$ which in particular does not depend on the curve $\gamma$. The above reasoning to obtain the bound \eqref{upperboundbeurling} holds if $c N^{-1} \log N < \frac{1}{4} N^{-\delta'/2}$ and hence for large enough $N$. If $N$ is small then the bound \eqref{upperboundbeurling} is larger than 1 and holds trivially. To bound the first term in \eqref{exitsBeforeTouching} we use Lemma \ref{diameterBrownianMotion},
\begin{align*}
& \textstyle \IP(\tau^x_y > \frac{1}{4}t_0)
\leq \textstyle \IP(\tau^x_y > \frac{1}{4}N^{-\delta})
\leq \IP(\diam B[0, \frac{1}{4}N^{-\delta}] \leq \frac{1}{2} N^{-\delta' / 2}) \\
& \leq c_2 \exp( - c_3 N^{\delta' - \delta} ),
\end{align*}
for some constants $c_2,c_3>0$.

We have that
\begin{align}
& \textstyle \IP(|B_{t_0}| \leq \varepsilon \mid 0 < d_{\mathcal{E}} ( B[0,\frac{1}{2}t_0], \gamma ) \leq c N^{-1} \log N, B[0,\frac{3}{4}t_0] \cap \gamma = \emptyset) \nonumber\\
& \leq \sup_{x\in\IC} \IP(|B^x_{\frac{1}{4}t_0}| \leq \varepsilon) = \IP(|B_{\frac{1}{4}t_0}| \leq \varepsilon) \leq \frac{8}{\pi}\varepsilon^2 t_0^{-1}.\label{starstar}
\end{align}
The first inequality in \eqref{starstar} follows from the Markov property of Brownian motion. The equality in \eqref{starstar} follows from the fact that $B^x_{\frac{1}{4}t_0}$ is a two-dimensional Gaussian random vector centered at $x$. By combining \eqref{star}, the bound on \eqref{comesClose}, and \eqref{starstar}, we conclude that
\begin{align*}
& \IP(0 < d_{\mathcal{E}} ( B^{\Loop,t_0}, \gamma ) \leq c N^{-1} \log N) \\
& \leq \frac{32}{\pi} [ c_1 ( 4c N^{\delta' / 2} N^{-1} \log N )^{1/2} + c_2 \exp( - c_3 N^{\delta' - \delta} ) ].\qedhere
\end{align*}
\end{proof}

\begin{proof}[Proof of Proposition \ref{correspondence1}]
Let $2-\theta =: \delta < \delta' < 2$ and let $X_N$ be the number of loops in $\BLS$ of time length at least $N^{-\delta}$. First, we give an upper bound on $X_N$. Note that $X_N$ is stochastically less than the number of loops $\gamma$ in a Brownian loop soup in the full plane $\IC$ with $t_{\gamma} \geq N^{-\delta}$ and $\gamma(0)\in D$. The latter random variable has the Poisson distribution with mean
\[
\lambda \int_D \int_{N^{-\delta}}^{\infty} \frac{1}{2\pi t_0^2} dt_0 dA(z) = \lambda A(D) \frac{1}{2\pi} N^\delta,
\]
where $A$ denotes two-dimensional Lebesgue measure. By Chebyshev's inequality, $X_N \leq N^{\delta} \log N$ with probability tending to 1 as $N\to\infty$.

Second, we bound the probability that $\BLS$ contains loops of large time length with small diameter. By Lemma \ref{diameterBrownianMotion},
\begin{align}
& \IP(\exists \gamma \in \BLS, t_{\gamma} \geq N^{-\delta}, \diam\gamma < N^{-\delta' /2}) \nonumber\\
& \leq N^{\delta} \log N\,c_1 \exp(- c_2 N^{\delta' - \delta}) + \IP(X_N > N^{\delta} \log N),\label{corrbound1}
\end{align}
for some constants $c_1,c_2>0$. The expression \eqref{corrbound1} converges to 0 as $N\to\infty$.

Third, we prove the proposition. To this end, we discover the loops in $\BLS$ one by one in decreasing order of their time length, similarly to the construction in Section 4.3 of \cite{NacWer}. This exploration can be done in the following way. Let $\BLS_1,\BLS_2,\ldots$ be a sequence of independent Brownian loop soups with intensity $\lambda$ in $D$. From $\BLS_1$ take the loop $\gamma_1$ with the largest time length. From $\BLS_2$ take the loop $\gamma_2$ with the largest time length smaller than $t_{\gamma_1}$. Iterating this procedure yields a random collection of loops $\{\gamma_1,\gamma_2,\ldots\}$, which is such that $t_{\gamma_1}>t_{\gamma_2}>\cdots$ a.s. By properties of Poisson point processes, $\{\gamma_1,\gamma_2,\ldots\}$ is a Brownian loop soup with intensity $\lambda$ in $D$.

Given a set of discovered loops $\gamma_1,\ldots,\gamma_{k-1}$, we bound the probability that the next loop $\gamma_k$ comes close to $\gamma_i$ but does not intersect $\gamma_i$, for each $i \in\{1,\ldots,k-1\}$ separately. Note that, because of the conditioning, we can treat $\gamma_i$ as a deterministic loop, while $\gamma_k$ is random. Therefore, to obtain such a bound,  we can use Lemma \ref{minimaldistance} on the event that $t_{\gamma_k} \geq N^{-\delta}$ and $\diam\gamma_i \geq N^{-\delta' / 2}$. We use the first and second steps of this proof to bound the probability that $\BLS$ contains more than $N^{\delta}\log N$ loops of large time length, or loops of large time length with small diameter. Thus,
\begin{align}
& \IP(\exists \gamma,\gamma' \in \BLS, t_{\gamma},t_{\gamma'}\geq N^{-\delta}, 0 < d_{\mathcal{E}}(\gamma,\gamma') \leq c N^{-1} \log N) \nonumber \\
& \leq ( N^{\delta} \log N )^2 [ c_3 N^{-1/2 + \delta' / 4} (c \log N)^{1/2} + c_3 \exp(-c_4 N^{\delta'-\delta}) ] + \nonumber \\
& \qquad \IP(\{X_N > N^{\delta} \log N\} \cup \{\exists \gamma \in \BLS, t_{\gamma}\geq N^{-\delta}, \diam\gamma < N^{-\delta' /2}\}),\label{corrbound2}
\end{align}
for some constants $c_3,c_4>0$. If $\delta'< 2/9$, then \eqref{corrbound2} converges to 0 as $N\to\infty$.
\end{proof}

\section{Proof of main result}\label{sec:proof}

\begin{proof}[Proof of Theorem \ref{mainthmBLS}]
By Corollary 5.4 of \cite{LawTru}, for every $N$ we can define on the same probability space $\mathcal{\tilde L}_N$ and $\mathcal{L}$ such that the following holds with probability tending to 1 as $N\to\infty$: There is a one-to-one correspondence between the loops in $\mathcal{\tilde L}_N^{N^{\theta-2}}$ and the loops in $\mathcal{L}^{N^{\theta-2}}$ such that, if $\tilde\gamma \in \mathcal{\tilde L}_N^{N^{\theta-2}}$ and $\gamma\in \mathcal{L}^{N^{\theta-2}}$ are paired in this correspondence, then $d_{\infty}(\tilde\gamma,\gamma) < c N^{-1}\log N$, where $c$ is a constant which does not depend on $N$.

We prove that in the above coupling, for all $\delta,\alpha>0$ there exists $N_0$ such that for all $N\geq N_0$ the following holds with probability at least $1-\alpha$: For every outermost cluster $C$ of $\mathcal{L}$ there exists an outermost cluster $\tilde C_N$ of $\mathcal{\tilde L}_N^{N^{\theta-2}}$ such that
\begin{equation}\label{toprovemain2}
d_H(C,\tilde C_N) < \delta,\qquad d_H(\Ext C,\Ext \tilde C_N) < \delta,
\end{equation}
and for every outermost cluster $\tilde C_N$ of $\mathcal{\tilde L}_N^{N^{\theta-2}}$ there exists an outermost cluster $C$ of $\mathcal{L}$ such that \eqref{toprovemain2} holds. By Lemma \ref{lem:helpfulinclusions}, \eqref{toprovemain2} implies that $d_H(\partial_o C, \partial_o \tilde C_N) < 2\delta$ and $d_H(\Hull C,\Hull \tilde C_N) < 2\delta$. Also, \eqref{toprovemain2} implies that the Hausdorff distance between the carpet of $\BLS$ and the carpet of $\mathcal{\tilde L}_N^{N^{\theta-2}}$ is less than or equal to $\delta$. Hence this proves the theorem.

Fix $\delta,\alpha>0$. To simplify the presentation of the proof of \eqref{toprovemain2}, we will often use the phrase ``with high probability'', by which we mean with probability larger than a certain lower bound which is uniform in $N$. It is not difficult to check that we can choose these lower bounds in such a way that \eqref{toprovemain2} holds with probability at least $1-\alpha$.

First we define some constants. By Lemma 9.7 of \cite{SheWer}, a.s.\ there are only finitely many clusters of $\mathcal{L}$ with diameter larger than any positive threshold; moreover they are all at positive distance from each other. Let $\rho\in(0,\delta/2)$ be such that, with high probability, for every $z\in D$ we have that $z\in \Hull C$ for some outermost cluster $C$ of $\BLS$ with $\diam C \geq \delta/2$, or $d_{\mathcal{E}}(z,C) < \delta/4$ for some outermost cluster $C$ of $\BLS$ with $\rho < \diam C < \delta/2$. The existence of such a $\rho$ follows from the fact that a.s.\ $\BLS$ is dense in $D$ and that there are only finitely many clusters of $\BLS$ with diameter at least $\delta/2$. We call a cluster or subcluster \emph{large} (\emph{small}) if its diameter is larger than (less than or equal to) $\rho$.

Let $\varepsilon_1>0$ be such that, with high probability,
\[
| \diam C - \rho | > \varepsilon_1
\]
for all clusters $C$ of $\mathcal{L}$. The existence of such an $\varepsilon_1$ follows from the fact that a.s.\ there are only finitely many clusters with diameter larger than any positive threshold (see Lemma 9.7 of \cite{SheWer}) and the fact that the distribution of cluster sizes does not have atoms. The latter fact is a consequence of scale invariance \cite{LawWer}, which can be seen to yield that the existence of an atom would imply the existence of uncountably many atoms, which is impossible. Let $\varepsilon_2>0$ be such that, with high probability,
\[
d_{\mathcal{E}}(C_1,C_2)>\varepsilon_2
\]
for all distinct large clusters $C_1,C_2$ of $\mathcal{L}$. For every large cluster $C_1$ of $\mathcal{L}$, let $\wp(C_1)$ be a path connecting $\text{Hull} C_1$ with $\infty$ such that, for all large clusters $C_2$ of $\BLS$ such that $\Hull C_1 \not\subset \Hull C_2$, we have that $\wp(C_1) \cap \text{Hull} C_2 =\emptyset$. Let $\varepsilon_3>0$ be such that, with high probability,
\[
d_{\mathcal{E}}(\wp(C_1), \text{Hull}C_2) > \varepsilon_3
\]
for all large clusters $C_1,C_2$ of $\mathcal{L}$ such that $\Hull C_1\not\subset \Hull C_2$. By Lemma \ref{lem:topological} (and Remark \ref{remark:topological}) we can choose $\varepsilon_4>0$ such that, with high probability, for every large cluster $C$ of $\BLS$,
\[
\text{if } d_H(C,\tilde C) < \varepsilon_4, \text{ then } \Ext C \subset (\Ext \tilde C)^{\min\{\delta,\varepsilon_2\}/8}
\]
for any collection of loops $\tilde C$.

Let $t_0>0$ be such that, with high probability, every subcluster $C$ of $\mathcal{L}$ with $\diam C > \rho-\varepsilon_1$ contains a loop of time length larger than $t_0$. Such a $t_0$ exists by Lemma \ref{existencerho}. In particular, every large subcluster of $\mathcal{L}$ contains a loop of time length larger than $t_0$. Note that the number of loops with time length larger than $t_0$ is a.s.\ finite.

From now on the proof is in six steps, and we start by giving a sketch of these steps (see Figure \ref{fig:diagramproof}). First, we treat the large clusters. For every large cluster $C$ of $\mathcal{L}$, we choose a finite subcluster $C'$ of $C$ such that $d_H(C,C')$ and $d_H(\Ext C,\Ext C')$ are small, using Theorem \ref{finiteapproximation}. Second, we approximate $C'$ by a subcluster $\tilde C_N'$ of $\mathcal{\tilde L}_N^{N^{\theta-2}}$ such that $d_H(\Ext C', \Ext \tilde C_N')$ is small, using the one-to-one correspondence between random walk loops and Brownian loops, Theorem \ref{thm:collectionconv} and Corollary \ref{corbrownianloop}. Third, we let $\tilde C_N$ be the cluster of $\mathcal{\tilde L}_N^{N^{\theta-2}}$ that contains $\tilde C_N'$. Here we make sure, using Proposition \ref{correspondence1}, that for distinct subclusters $\tilde C_{1,N}', \tilde C_{2,N}'$, the corresponding clusters $\tilde C_{1,N}, \tilde C_{2,N}$ are distinct. It follows that $d_H(C, \tilde C_N)$ and $d_H(\Ext C,\Ext \tilde C_N)$ are small. Fourth, we show that the obtained clusters $\tilde C_N$ are large. We also show that we obtain in fact all large clusters of $\mathcal{\tilde L}_N^{N^{\theta-2}}$ in this way. Fifth, we prove that a large cluster $C$ of $\BLS$ is outermost if and only if the corresponding large cluster $\tilde C_N$ of $\mathcal{\tilde L}_N^{N^{\theta-2}}$ is outermost. Sixth, we deal with the small outermost clusters.

\begin{figure}
	\begin{center}
		\includegraphics[scale=1]{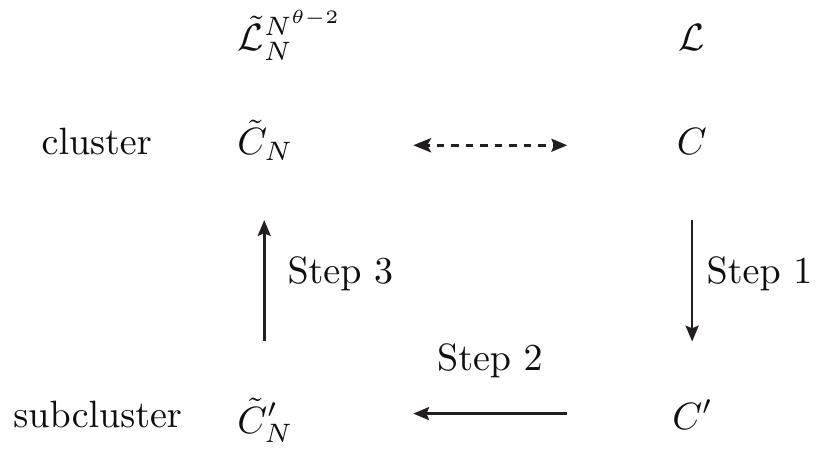}
	\end{center}
	\caption{Schematic diagram of the proof of Theorem \ref{mainthmBLS}. We start with a cluster $C$ of $\BLS$ and, following the arrows, we construct a cluster $\tilde C_N$ of $\mathcal{\tilde L}_N^{N^{\theta-2}}$. The dashed arrow indicates that $C$ and $\tilde C_N$ satisfy \eqref{toprovemain2}.}
	\label{fig:diagramproof}
\end{figure}

{\bf Step 1.} Let $\mathcal{C}$ be the collection of large clusters of $\mathcal{L}$. By Lemma 9.7 of \cite{SheWer}, the collection $\mathcal{C}$ is finite a.s. For every $C\in\mathcal{C}$ let $C'$ be a finite subcluster of $C$ such that $C'$ contains all loops in $C$ which have time length larger than $t_0$ and
\begin{align}
d_H(C,C') &< \min\{\delta,\varepsilon_1,\varepsilon_2,\varepsilon_3,\varepsilon_4\}/16,\label{hdist1} \\
d_H(\Ext C, \Ext C') &< \min\{\delta,\varepsilon_2\}/16,\label{ext1}
\end{align}
a.s. This is possible by Theorem \ref{finiteapproximation}. Let $\mathcal{C}'$ be the collection of these finite subclusters $C'$.

{\bf Step 2.} For every $C'\in\mathcal{C}'$ let $\tilde C_N'\subset \mathcal{\tilde L}_N^{N^{\theta-2}}$ be the set of random walk loops which correspond to the Brownian loops in $C'$, in the one-to-one correspondence from the first paragraph of this proof. This is possible for large $N$, with high probability, since then
\[
\textstyle \bigcup \mathcal{C}' \subset \mathcal{L}^{N^{\theta-2}},
\]
where $\bigcup \mathcal{C}' = \bigcup_{C'\in\mathcal{C}'} C'$. Let $\mathcal{\tilde C}_N'$ be the collection of these sets of random walk loops $\tilde C_N'$.

Now we prove some properties of the elements of $\mathcal{\tilde C}_N'$. By Corollary \ref{corbrownianloop}, $C'$ has no touchings a.s. Hence, by Theorem \ref{thm:collectionconv} (and Remark \ref{remark:topological}), for large $N$, with high probability,
\begin{equation}
d_H(\Ext C',\Ext \tilde C_N') < \min\{\delta,\varepsilon_2\}/16.\label{ext2}
\end{equation}
Next note that almost surely, $d_{\mathcal{E}}(\gamma,\gamma')>0$ for all non-intersecting loops $\gamma,\gamma'\in\mathcal{L}$, and $\text{overlap}(\gamma,\gamma')>0$ for all intersecting loops $\gamma,\gamma'\in\mathcal{L}$. Since the number of loops in $\bigcup \mathcal{C}'$ is finite, we can choose $\eta>0$ such that, with high probability, $d_{\mathcal{E}}(\gamma,\gamma') > \eta$ for all non-intersecting loops $\gamma,\gamma'\in\bigcup \mathcal{C}'$, and $\text{overlap}(\gamma,\gamma') > \eta$ for all intersecting loops $\gamma,\gamma'\in\bigcup \mathcal{C}'$. For large $N$, $cN^{-1}\log N < \eta/2$ and hence with high probability,
\begin{equation}\label{equation4}
\gamma_1\cap\gamma_2 = \emptyset \text{ if and only if } \tilde \gamma_1\cap\tilde\gamma_2 = \emptyset, \text{ for all } \textstyle \gamma_1,\gamma_2\in\bigcup\mathcal{C}',
\end{equation}
where $\tilde \gamma_1,\tilde \gamma_2$ are the random walk loops which correspond to the Brownian loops $\gamma_1,\gamma_2$, respectively. By \eqref{equation4}, every $\tilde C_N'\in\mathcal{\tilde C}_N'$ is connected and hence a subcluster of $\mathcal{\tilde L}_N^{N^{\theta-2}}$. Also by \eqref{equation4}, for distinct $C_1',C_2'\in\mathcal{C}'$, the corresponding $\tilde C_{1,N}', \tilde C_{2,N}'\in\mathcal{\tilde C}_N'$ do not intersect each other when viewed as subsets of the plane.

{\bf Step 3.} For every $\tilde C_N'\in\mathcal{\tilde C}_N'$ let $\tilde C_N$ be the cluster of $\mathcal{\tilde L}_N^{N^{\theta-2}}$ which contains $\tilde C_N'$. Let $\mathcal{\tilde C}_N$ be the collection of these clusters $\tilde C_N$. We claim that for distinct $\tilde C_{1,N}', \tilde C_{2,N}'\in\mathcal{\tilde C}_N'$, the corresponding $\tilde C_{1,N}, \tilde C_{2,N} \in\mathcal{\tilde C}_N$ are distinct, for large $N$, with high probability. This implies that there is one-to-one correspondence between elements of $\mathcal{\tilde C}_N'$ and elements of $\mathcal{\tilde C}_N$, and hence between elements of $\mathcal{C}$, $\mathcal{C}'$, $\mathcal{\tilde C}_N'$ and $\mathcal{\tilde C}_N$.

To prove the claim, we combine Proposition \ref{correspondence1} and the one-to-one correspondence between random walk loops and Brownian loops to obtain that, for large $N$, with high probability,
\begin{equation}\label{equation3}
\text{if } \gamma_1\cap\gamma_2 = \emptyset \text{ then } \tilde \gamma_1\cap\tilde\gamma_2 = \emptyset, \text{ for all } \gamma_1,\gamma_2\in\mathcal{L}^{N^{\theta-2}},
\end{equation}
where $\tilde\gamma_1,\tilde\gamma_2$ are the random walk loops which correspond to the Brownian loops $\gamma_1,\gamma_2$, respectively. Let $\tilde C_{1,N}', \tilde C_{2,N}'\in\mathcal{\tilde C}_N'$ be distinct. Let $C_1',C_2'\in\mathcal{C}'$ be the finite subclusters of Brownian loops which correspond to $\tilde C_{1,N}', \tilde C_{2,N}'$, respectively. By construction, $C_1',C_2'$ are contained in clusters of $\mathcal{L}^{N^{\theta-2}}$ which are distinct. Hence by \eqref{equation3}, $\tilde C_{1,N}, \tilde C_{2,N}$ are distinct.

Next we prove that, for large $N$, with high probability,
\begin{align}
d_H(C, \tilde C_N) &< \min\{\delta,\varepsilon_1,\varepsilon_2,\varepsilon_3,\varepsilon_4\}/4,\label{hdist3} \\
d_H(\text{Ext} C, \text{Ext} \tilde C_N) &< \min\{\delta,\varepsilon_2\}/4,\label{ext3dist}
\end{align}
which implies that $C$ and $\tilde C_N$ satisfy \eqref{toprovemain2}. To prove \eqref{hdist3}, let $N$ be sufficiently large, so that in particular $cN^{-1}\log N < \min\{ \delta,\varepsilon_1,\varepsilon_2,\varepsilon_3,\varepsilon_4 \}/16$. By \eqref{hdist1}, with high probability,
\[
C \subset (C')^{\min\{ \delta,\varepsilon_1,\varepsilon_2,\varepsilon_3,\varepsilon_4 \}/16}
\subset (\tilde C_N')^{\min\{ \delta,\varepsilon_1,\varepsilon_2,\varepsilon_3,\varepsilon_4 \}/8}
\subset (\tilde C_N)^{\min\{ \delta,\varepsilon_1,\varepsilon_2,\varepsilon_3,\varepsilon_4 \}/8}.
\]
By \eqref{equation3}, $\tilde C_N \subset C^{\min\{ \delta,\varepsilon_1,\varepsilon_2,\varepsilon_3,\varepsilon_4 \}/16}$. This proves \eqref{hdist3}. To prove \eqref{ext3dist}, note that by \eqref{hdist3} and the definition of~$\varepsilon_4$, $\text{Ext} C \subset (\text{Ext} \tilde C_N)^{\min\{\delta,\varepsilon_2\}/8}$. By \eqref{ext1} and \eqref{ext2},
\[
\text{Ext} \tilde C_N \subset \text{Ext} \tilde C_N' \subset (\text{Ext} C)^{\min\{\delta,\varepsilon_2\}/8}.
\]
This proves \eqref{ext3dist}.

{\bf Step 4.} We prove that, for large $N$, with high probability, all $\tilde C_N \in \mathcal{\tilde C}_N$ are large, and that all large clusters of $\mathcal{\tilde L}_N^{N^{\theta-2}}$ are elements of $\mathcal{\tilde C}_N$. This gives that, for large $N$, with high probability, there is a one-to-one correspondence between large clusters $C$ of $\BLS$ and large clusters $\tilde C_N$ of $\mathcal{\tilde L}_N^{N^{\theta-2}}$ such that \eqref{hdist3} and \eqref{ext3dist} hold, and hence such that \eqref{toprovemain2} holds.

First we show that, for large $N$, with high probability, all $\tilde C_N \in \mathcal{\tilde C}_N$ are large. By \eqref{hdist3} and the definition of $\varepsilon_1$, for large $N$, with high probability, $\diam\tilde C_N > \diam C - \varepsilon_1 > \rho$, i.e.\ $\tilde C_N$ is large.

Next we prove that, for large $N$, with high probability, all large clusters of $\mathcal{\tilde L}_N^{N^{\theta-2}}$ are elements of $\mathcal{\tilde C}_N$. Let $\tilde G_N$ be a large cluster of $\mathcal{\tilde L}_N^{N^{\theta-2}}$. Let $G_N\subset \mathcal{L}^{N^{\theta-2}}$ be the set of Brownian loops which correspond to the random walk loops in $\tilde G_N$. By \eqref{equation3}, $G_N$ is connected and hence a subcluster of $\mathcal{L}$. If $cN^{-1}\log N < \varepsilon_1/2$, then $\diam G_N > \rho-\varepsilon_1$. Let $G$ be the cluster of $\mathcal{L}$ which contains $G_N$. We have that $\diam G > \rho-\varepsilon_1$ and hence by the definition of $\varepsilon_1$, with high probability, $G$ is large, i.e.\ $G\in\mathcal{C}$. Let $\tilde G_N^*$ be the element of $\mathcal{\tilde C}_N$ which corresponds to $G$. We claim that
\begin{equation}\label{starisG}
\tilde G_N^* = \tilde G_N,
\end{equation}
which implies that $\tilde G_N \in \mathcal{\tilde C}_N$.

To prove \eqref{starisG}, let $G'$ be the element of $\mathcal{C}'$ which corresponds to $G$. Since $G_N$ is a subcluster of $\mathcal{L}$ with $\diam G_N > \rho - \varepsilon_1$, $G_N$ contains a loop $\gamma$ of time length larger than $t_0$. Since $\gamma\in G$ and $t_{\gamma} > t_0$, by the construction of $G'$, we have that $\gamma\in G'$. Hence $\tilde \gamma \in \tilde G_N^*$, where $\tilde \gamma$ is the random walk loop corresponding to the Brownian loop $\gamma$. Since $\gamma\in G_N$, by the definition of $G_N$, we have that $\tilde \gamma\in \tilde G_N$. It follows that $\tilde\gamma \in \tilde G_N \cap \tilde G_N^*$, which implies that \eqref{starisG} holds.

{\bf Step 5.} Let $C,G$ be distinct large clusters of $\BLS$, and let $\tilde C_N,\tilde G_N$ be the large clusters of $\mathcal{\tilde L}_N^{N^{\theta-2}}$ which correspond to $C,G$, respectively. We prove that, for large $N$, with high probability,
\begin{equation}\label{eqoutermost}
\Hull C\subset \Hull G \text{ if and only if } \Hull \tilde C_N \subset \Hull \tilde G_N.
\end{equation}
It follows from \eqref{eqoutermost} that a large cluster $C$ of $\BLS$ is outermost if and only if the corresponding large cluster $\tilde C_N$ of $\mathcal{\tilde L}_N^{N^{\theta-2}}$ is outermost.

To prove \eqref{eqoutermost}, suppose that $\Hull C\subset \Hull G$. By the definition of $\varepsilon_2$, $(\text{Hull} C)^{\varepsilon_2/2} \subset \IC \setminus (\text{Ext} G)^{\varepsilon_2/2}$. By \eqref{ext3dist}, $\text{Ext} \tilde G_N \subset (\text{Ext} G)^{\varepsilon_2/2}$. By \eqref{hdist3}, $\tilde C_N \subset C^{\varepsilon_2/2} \subset (\text{Hull} C)^{\varepsilon_2/2}$. Hence
\[
\tilde C_N \subset (\Hull C)^{\varepsilon_2/2} \subset \IC \setminus (\Ext G)^{\varepsilon_2/2} \subset \IC \setminus \Ext \tilde G_N = \Hull \tilde G_N.
\]
It follows that $\Hull \tilde C_N \subset \Hull \tilde G_N$.

To prove the reverse implication of \eqref{eqoutermost}, suppose that $\Hull \tilde C_N \subset \Hull \tilde G_N$. There are three cases: $\Hull C\subset \Hull G$, $\Hull G\subset \Hull C$ and $\Hull C\cap\Hull G=\emptyset$. We will show that the second and third case lead to a contradiction, which implies that $\Hull C\subset \Hull G$. For the second case, suppose that $\Hull G\subset \Hull C$. Then, by the previous paragraph, $\Hull \tilde G_N\subset \Hull \tilde C_N$. This contradicts the fact that $\Hull \tilde C_N \subset \Hull \tilde G_N$ and $\tilde C_N \cap \tilde G_N=\emptyset$.

\begin{figure}
	\begin{center}
		\includegraphics[scale=0.9]{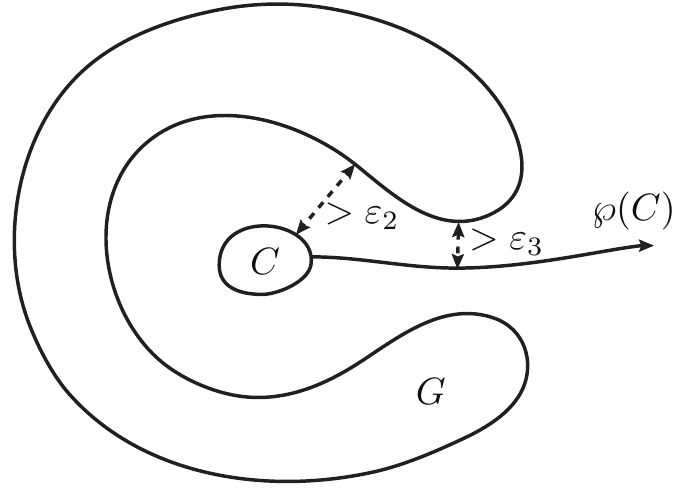}
	\end{center}
	\caption{The case $\Hull C \cap \Hull G =\emptyset$ in Step 5.}
	\label{fig:clusterpath}
\end{figure}

For the third case, suppose that $\Hull C \cap \Hull G=\emptyset$. Let $\wp(C)$ be the path from the definition of $\varepsilon_3$, which connects $\Hull C$ with $\infty$ such that $\wp(C)\cap \Hull G=\emptyset$ (see Figure \ref{fig:clusterpath}). By the definition of $\varepsilon_2,\varepsilon_3$, with high probability,
\[
((\Hull C)^{\min\{\varepsilon_2,\varepsilon_3\}/2} \cup \wp(C)) \cap (\Hull G)^{\min\{\varepsilon_2,\varepsilon_3\}/2} = \emptyset.
\]
By \eqref{hdist3}, for large $N$, with high probability,
\[
\tilde C_N\subset C^{\min\{\varepsilon_2,\varepsilon_3\}/2} \subset (\Hull C)^{\min\{\varepsilon_2,\varepsilon_3\}/2}.
\]
Similarly, $\tilde G_N\subset (\Hull G)^{\min\{\varepsilon_2,\varepsilon_3\}/2}$. It follows that there exists a path from $\tilde C_N$ to $\infty$ that avoids $\tilde G_N$. This contradicts the assumption that $\Hull \tilde C_N\subset \Hull \tilde G_N$.

{\bf Step 6.} Finally we treat the small outermost clusters. Let $G$ be a small outermost cluster of $\BLS$. By the definition of $\rho$, with high probability, there exists an outermost cluster $C$ of $\BLS$ with $\rho<\diam C < \delta/2$ such that $d_{\mathcal{E}}(C,G) < \delta/4$. It follows that
\begin{align*}
& d_H(C,G) \leq d_{\mathcal{E}}(C,G) + \max \{\diam C,\diam G\} < 3\delta/4,\\
& d_H(\Ext C,\Ext G) \leq \textstyle \frac{1}{2} \max \{ \diam C,\diam G\} < \delta/4.
\end{align*}
Note that $C$ is large, and let $\tilde C_N$ be the large outermost cluster of $\mathcal{\tilde L}_N^{N^{\theta-2}}$ which corresponds to $C$. Since $C$ and $\tilde C_N$ satisfy \eqref{hdist3} and \eqref{ext3dist}, we obtain that $d_H(G,\tilde C_N)<\delta$ and $d_H(\Ext G,\Ext \tilde C_N)<\delta/2$.

Next, by the one-to-one correspondence between elements of $\mathcal{C}$ and $\mathcal{\tilde C}_N$ satisfying \eqref{hdist3} and \eqref{ext3dist}, for large $N$, with high probability,
\begin{align} \label{eq:exteriordistance}
\textstyle{\dist_H \big( \bigcap_{C\in\mathcal{C}} \Ext C, \bigcap_{\tilde C_N \in \mathcal{\tilde C}_N} \Ext \tilde C_N \big) <\delta/4.}
\end{align}
Let $\tilde G_N$ be a small outermost cluster of $\mathcal{\tilde L}_N^{N^{\theta-2}}$, then we have that $\tilde G_N \subset \bigcap_{\tilde C_N \in \mathcal{\tilde C}_N} \Ext \tilde C_N$. By~\eqref{eq:exteriordistance} and the fact that $\BLS$ is dense in $D$, a.s.\ there exists an outermost cluster $C$ of $\BLS$ with $\diam C < \delta/2$ such that $d_{\mathcal{E}}(C, \tilde G_N) < \delta/2$. It follows that
\begin{align*}
& d_H(C,\tilde G_N) \leq d_{\mathcal{E}}(C,\tilde G_N) + \max \{\diam C,\diam \tilde G_N\} < \delta,\\
& d_H(\Ext C,\Ext \tilde G_N) \leq \textstyle \frac{1}{2} \max \{ \diam C,\diam \tilde G_N\} < \delta/4.
\end{align*}
This completes the proof.
\end{proof}

\noindent
{\bf Acknowledgments.} Tim van de Brug and Marcin Lis thank New York University Abu Dhabi for the hospitality during two visits in 2013 and 2014. Tim van de Brug and Federico Camia thank Gregory Lawler for useful conversations in Prague in 2013 and in Seoul in 2014, respectively. The authors thank Ren\'e Conijn for a useful remark concerning the proof of Theorem \ref{thmbrownianmotion}. The research was conducted and the paper was completed while Marcin Lis was at the Department of Mathematics of VU University Amsterdam. At the time of the research all authors were supported by NWO Vidi grant 639.032.916.

\bibliographystyle{amsplain}
\bibliography{rwls}

\enlargethispage{0.2cm}

\end{document}